\newcommand{\cA}{\mathcal{A}}
\newcommand{\cB}{\mathcal{B}}
\newcommand{\cC}{\mathcal{C}}
\newcommand{\cD}{\mathcal{D}}
\newcommand{\cE}{\mathcal{E}}
\newcommand{\cL}{\mathcal{L}}
\newcommand{\cP}{\mathcal{P}}
\newcommand{\bQ}{\mathbb{Q}}
\newcommand{\bR}{\mathbb{R}}
\newcommand{\PR}{\mathbb{P}}
\newcommand{\bONE}{\mathbbm{1}}
\newcommand{\dd}{ \mathrm{d}}
\renewcommand{\epsilon}{\varepsilon}
\newcommand{\vn}[1]{\left| \! \left| #1\right| \! \right|}
\newcommand{\ip}[2]{\langle #1,#2\rangle}
\numberwithin{equation}{section}
\newtheorem{theorem}{Theorem}[section]
\newtheorem{lemma}[theorem]{Lemma}
\newtheorem{proposition}[theorem]{Proposition}
\theoremstyle{definition}
\newtheorem{definition}[theorem]{Definition}
\newtheorem{remark}[theorem]{Remark}
\newtheorem{assumption}[theorem]{Assumption}
\begin{document}

\title{A large deviation perspective on exponential decay of entropy and lower bounds on the Ricci-curvature}

\author{
\renewcommand{\thefootnote}{\arabic{footnote}}
Richard C. Kraaij
\footnotemark[1]
}

\footnotetext[1]{
Delft Institute of Applied Mathematics, Delft University of Technology, Mekelweg 4, 
2628 CD Delft, The Netherlands, E-mail: \texttt{kraaij.rc@gmail.com}.
}

\maketitle

\begin{abstract}
We offer a new point of view on the (Modified) Log-Sobolev inequality and lower bounds on the Ricci-curvature in the setting where the dynamics are obtained as the limit of Markov processes. In this setting, the large deviation rate function of the stationary measures of the Markov processes, plays the role of entropy. We define an \textit{entropy-information inequality (EII)} that generalizes the (MLSI) and is equivalent to exponential decay of the rate function along the flow, and define an \textit{entropy-convexity inequality (ECI)} that serves as an analogue of a lower bound on the Ricci-curvature in this setting.
\end{abstract}

{\bf Mathematics Subject Classifications (2010).} 60F10, 60J99, 93D30; 
 
{\bf{Key words.} Freidlin-Wentzell theory; Hamilton equations; Lyapunov functions; entropic interpolations}



\section{Introduction}

Log-Sobolev (LSI) and Modified Log-Sobolev (MLSI) inequalities attract considerable attention because of their connection to the exponential decay of entropy, and as a consequence the exponential decay of variance along the flow of the Kolmogorov forward equation.

More recently, also $\kappa$-lower bounds on the Ricci-curvature, in terms of $\kappa$-convexity of the entropy along displacement interpolations, have attracted interest. Such lower bounds, for $\kappa > 0$, imply among other things the (LSI) and (MLSI) inequalities, see cf. \cite{vRSt05,ErMa12,LoVi09,BaGeLe14}.

\smallskip

In this paper, we offer a new point of view on these notions in the setting where the flow is obtained as the limit of Markov processes, and where the entropy is replaced by the large deviation rate function, denoted by $S$, of the stationary measures of the Markov processes. This latter choice is motivated by Boltzmann's H-theorem which tells us that $S$ is a Lyapunov function for the flow \cite{RMN06}. We define an \textit{entropy-information inequality (EII)} that generalizes the (MLSI) and is a local equivalent to the global exponential decay of $S$ along the flow, and define an \textit{entropy-convexity inequality (ECI)} that serves as an analogue of a lower bound on the Ricci-curvature in this setting.

All our calculations and definitions are for finite-dimensional systems to make the intuitive ideas as clear as possible. The main definitions and results, however, generalize to the infinite dimensional setting, at the cost of greater technical difficulty. Examples of settings where the finite-dimensional ideas can be applied include the flow of the magnetization under high-temperature Glauber dynamics on the Curie-Weiss model and the flow of the densities of species for the Wright-Fisher diffusion, flows for which the exponential decay of entropy is not covered by the (LSI) or (MLSI) inequalities. Additionally, we consider the flow obtained from the non-reversible underdamped Langevin dynamics via vanishing diffusion constant, for which (EII) also works, but for which our analogue of the lower bound on the Ricci-curvature does not hold

\smallskip

The (EII) is based on identifying via Gronwall's lemma sufficient and necessary conditions for the exponential decay of entropy, that is, the exponential decay of the rate function $S$. A notion that generalizes a lower bound on the Ricci-curvature needs more work. 

It has been shown by \cite{vRSt05} that a lower bound on the Ricci-curvature, which is defined in geometric terms in the theory of differentiable manifolds, is equivalent to $\kappa$-convexity of the entropy along displacement interpolations. This equivalence inspired \cite{Ma11,ErMa12,Mi13} to define a lower bound on the Ricci-curvature for the space of measures on a finite set via a set of `displacement interpolations' obtained in a clever way from a Markov jump process generator, see also \cite{FaMa15,ErMaTe15} where various non-trivial discrete systems are being considered. Additionally, it was shown that the flow of the Kolmogorov forward equation corresponding to this generator is a gradient flow of the entropy with respect to a displacement distance obtained from the introduced class of interpolations.

\cite{Le13} has suggested to use entropic interpolations instead of displacement interpolations and we will generalize and use his notion in this paper. Even though calculations with entropic interpolations are generally harder, they do have some interesting features. First of all, the definition of entropic-interpolations is context-independent and only uses the underlying Markovian structure. Second, entropic interpolations are more regular. Third, displacement interpolations can be obtained as limits of entropic interpolations \cite{Le12,Le16} and thus entropic interpolations seem to be more general.

Our generalization of L\'{e}onard's notion of entropic interpolation is based on the observation that entropic interpolations minimise the path-space large deviation cost to connect two probability measures at two different times. The use of path-space large deviations also seems to be natural in view of the connections that have been observed between large deviations and gradient-flow systems, cf. \cite{MPR13,AdDiPeZi13,ErMaRe15} or references therein. These interpolations allow us to define a time-dependent `distance', which in turn can be used to define a notion of a lower-bound on the Ricci-curvature via convexity of $S$ along interpolations. An important feature is that if this non-standard lower bound holds with a positive constant, then we also have exponential decay of entropy with the same constant.

\smallskip

The paper is organized as follows. In Section \ref{section:large_deviations}, we introduce the path-space large deviation principles and how to find a suitable Lyapunov function or entropy. In Section \ref{section:decay_of_entropy}, we define the \textit{entropy-information} inequality which is a natural extension of the (LSI) and (MLSI) inequalities, given in terms of the large deviation principles. In Section \ref{section:entropic_interpolations}, we extend L\'{e}onards definition of entropic interpolations to the general setting of path-space large deviation principles. We study the behaviour of these interpolations and define a notion of a lower bound on the Ricci-curvature: the \textit{entropy-convexity inequality}.

In Section \ref{section:tensorization}, we show that the entropy-information and entropy-convexity inequalities perform well under tensorization. Finally, in Section \ref{section:examples}, we study five examples including the three mentioned above.

\section{Preliminaries: Large deviations and the McKean-Vlasov equation} \label{section:large_deviations}

Given a closed subset $E$ of $\bR^d$, we assume the existence of a sequence of measures $\PR_n \in \cP(D_E(\bR^+))$ so that the large deviation principle, for definitions see eg. \cite{DZ98,FK06}, holds for the trajectories:
\begin{equation} \label{eqn:LDP}
\PR_n\left[\{x(t)\}_{t \geq 0} \approx \{\gamma(t)\}_{t \geq 0}\right] \approx e^{-n u(\gamma)}.
\end{equation}
The topology on $D_E(\bR^+)$ is assumed to be the usual Skorokhod topology, see Chapter 3 in \cite{EK86}. We assume that the \textit{rate function} $u : D_E(\bR^+) \rightarrow [0,\infty]$ has compact level sets and has the form
\begin{equation*}
u(\gamma) = \begin{cases}
u_0(\gamma(0)) + \int_0^\infty \cL(\gamma(s),\dot{\gamma}(s)) \dd s & \text{if } \gamma \in \cA \cC, \\
\infty & \text{otherwise},
\end{cases}
\end{equation*}
where the \textit{Lagrangian} $\cL : E \times \bR^d \rightarrow [0,\infty)$ is lower semi-continuous and for each $x \in E$, $v \mapsto \cL(x,v)$ is convex. Finally, the space $\cA\cC \subseteq D_E(\bR^+)$ is the space of absolutely continuous curves.

\begin{definition}
We say that $\gamma \in D_E(\bR^+)$ is \textit{absolutely continuous}, $\gamma \in \cA\cC$, if for all functions $f$ that have a continuously differentiable extension to an open neighbourhood of $E$ we have for all $T > 0$
\begin{enumerate}[(a)]
\item $\int_0^T |\ip{\nabla f(\gamma(s))}{\dot{\gamma}(s)}| \dd s < \infty$,
\item $f(\gamma(T)) - f(\gamma(0)) = \int_0^T \ip{\nabla f(\gamma(s))}{\dot{\gamma}(s)} \dd s$.
\end{enumerate}
\end{definition}

Note that if $E = \bR$ this definition coincides with the usual definition of absolute continuity.

\smallskip

Such large deviation principles are obtained in various contexts, notably in Freidlin-Wentzell theory, which also works for Levy processes and for discrete time random walks, cf. Mogulskii's theorem, see \cite{FW98,FK06,DZ98} and references therein, but also for interacting jump processes on finite state spaces in \cite{Kr16b,DuRaWu16} and the Wright-Fisher model for population dynamics \cite{DaFe98}.

\smallskip 

In the setting that the measures $\PR_n$ correspond to Markov processes for which there exists stationary measures $\mu_n$ that satisfy the large deviation principle with rate function $S$, this $S$ is a Lyapunov function for the \textit{McKean-Vlasov} equation $\dot{x} = H_p(x,0)$. Here $H$ is the the \textit{Hamiltonian}, which is defined as $H(x,p) = \inf_v \left\{ \ip{p}{v} - \cL(x,v) \right\}$ and $H_p$ denotes the vector of derivatives of $H$ in the second coordinate. To be precise, it was found in \cite{RMN06} that $S(x(t)) \leq S(x(0))$ for any $t \geq 0$. 

In this paper, we analyse the decay of the entropy $S$ along the flow of the McKean-Vlasov equation in more detail. We will give conditions for exponential decay 
\begin{equation*}
S(x(t)) \leq e^{-\kappa t} S(x(0)).
\end{equation*}

Afterwards, we will extend the definition of entropic-interpolations introduced in \cite{Le13} and give conditions for the convexity of the entropy along these entropic interpolations.

\smallskip

The standing assumption on $H$ and $S$ for the results in this paper are the following. 

\begin{assumption} \label{assumption:basic_assumption}
We assume that the Hamiltonian $H : E \times \bR^d \rightarrow \bR$ satisfies
\begin{enumerate}[H(a)]
\item $H$ is twice continuously differentiable,
\item for every $x \in E$, the map $p \mapsto H(x,p)$ is convex and for every $x$ in the interior of $E$, the map $p \mapsto H(x,p)$ is strictly convex.
\end{enumerate}
There exists a continuous function $S : E \rightarrow [0,\infty]$ such that
\begin{enumerate}[S(a)]
\item $S$ is twice continuously differentiable on the interior $E^\circ$ of $E$,
\item for $x \in E^\circ$, we have $H(x,DS(x)) = 0$,
\item $S$ is a Lyapunov function for the McKean-Vlasov equation: if $x(t)$ solves $\dot{x}(t) = H_p(x(t),0)$ then $S(x(t)) \leq S(x(s))$ for all $0 \leq s \leq t$.
\end{enumerate}
\end{assumption}

\begin{remark}
The assumption that $H$ and $S$ are twice continuously differentiable can be relaxed to once continuously differentiable in various situations.
\end{remark}

The following three results verify that in the setting where $\PR_n$ are Markovian, the stationary measures satisfy the large deviation principle with rate function $S$ and where $S$ is differentiable on the interior of $E$, the conditions on $S$ of the assumption above are satisfied.  

First of all, we have an analogue of Boltzmann's H-theorem, which relates the stationary entropy $S$ to the solutions of the McKean-Vlasov equation.

\begin{proposition}[Proposition 3.1 in \cite{RMN06}, Proposition 2.8 in \cite{Kr16b}] \label{proposition:Htheorem2}
Suppose the measures $\PR_n$ correspond to Markov processes for which there exists stationary measures $\mu_n$ that satisfy the large deviation principle with rate function $S$.

Let $\{x(t)\}_{t \geq 0}$ be a solution to the McKean-Vlasov equation $\dot{x} = H_p(x,0)$, then  $S(x(t)) \leq S(x(s))$ for all $0 \leq s \leq t$.
\end{proposition}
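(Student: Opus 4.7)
The plan is to combine stationarity of $\mu_n$ with the contraction principle for the trajectorial LDP. Let $X_n$ denote the Markov process with law $\PR_n$ started from $\mu_n$. By stationarity, $X_n(t) \sim \mu_n$ for every $t \geq 0$, so the hypothesis on the stationary measures tells us that the laws of $X_n(t)$ satisfy an LDP with rate function $S$.

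On the other hand, since $X_n(0) \sim \mu_n$ has LDP rate $S$, the initial cost $u_0$ in~\eqref{eqn:LDP} coincides with $S$. Applying the contraction principle to the continuous evaluation map $\gamma \mapsto \gamma(t)$ gives an LDP for $X_n(t)$ with rate function
\begin{equation*}
I_t(y) = \inf_{\gamma \in \cA\cC,\, \gamma(t) = y}\left\{ S(\gamma(0)) + \int_0^\infty \cL(\gamma(s),\dot\gamma(s))\dd s\right\}.
\end{equation*}
By continuing $\gamma$ for $s > t$ along the McKean-Vlasov flow $\dot\gamma = H_p(\gamma,0)$ started at $y$, for which Legendre duality yields $\cL(\gamma, H_p(\gamma,0)) = -H(\gamma,0) = 0$ (the identity $H(\cdot,0)\equiv 0$ being the infinitesimal expression of probability conservation of the underlying dynamics), the contribution over $[t,\infty)$ can be driven to zero. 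Hence $I_t(y)$ equals the same infimum restricted to curves on $[0,t]$. Uniqueness of rate functions then forces $I_t \equiv S$, and for every absolutely continuous curve $\gamma$ with $\gamma(t) = y$ we obtain the variational bound
\begin{equation*}
S(\gamma(t)) \leq S(\gamma(0)) + \int_0^t \cL(\gamma(s),\dot\gamma(s))\dd s.
\end{equation*}

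Applying this bound to a solution $x(\cdot)$ of the McKean-Vlasov equation itself, the same vanishing of $\cL$ along the flow reduces the right-hand side to $S(x(0))$, giving $S(x(t)) \leq S(x(0))$. Repeating the argument on the shifted interval $[s,t]$ (or invoking the semigroup property of the flow) upgrades this to the claimed monotonicity $S(x(t)) \leq S(x(s))$ for all $0 \leq s \leq t$.

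The main obstacle is the contraction step. One must justify a combined LDP for trajectories with random initial condition $\mu_n$, and verify that the infimum defining $I_t(y)$ is attained, or at least closely approximated, through absolutely continuous curves, so that stationarity genuinely forces the identity $I_t \equiv S$ rather than only one of the two inequalities. Secondary technical points are recording the identity $H(\cdot,0)\equiv 0$ explicitly in the given framework, and handling boundary behaviour when $x(\cdot)$ approaches $\partial E$, where $S$ is only assumed continuous.
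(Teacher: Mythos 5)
The paper itself does not give a proof of this proposition; it is cited directly from [RMN06] and [Kr16b]. That said, your argument is essentially the standard one those references use, and it is the same contraction-principle reasoning the paper invokes in the proof of the adjacent Lemma~\ref{lemma:LDP_gives_entropy}: stationarity of $\mu_n$ identifies the time-$t$ marginal rate function obtained by contraction with $S$, yielding the variational inequality
\begin{equation*}
S(\gamma(t)) \leq S(\gamma(0)) + \int_0^t \cL(\gamma(s),\dot\gamma(s))\,\dd s
\end{equation*}
for all absolutely continuous $\gamma$, and applying it to a McKean--Vlasov trajectory, along which $\cL$ vanishes since $\cL(x,H_p(x,0)) = -H(x,0) = 0$, gives the decay. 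Your handling of the tail integral by prolonging the competitor along the McKean--Vlasov flow, and the shift to $[s,t]$ via the semigroup property, are both correct.

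Two small points. You are right that the load-bearing technical assumption is the \emph{joint} path-space LDP with initial cost $u_0 = S$ when starting from $\mu_n$; this is exactly what the paper takes for granted in the proof of Lemma~\ref{lemma:LDP_gives_entropy} (``In the case that $u_0=S$\dots it follows that $u(\cdot,t)=S$ for all $t\ge 0$ by the contraction principle''), so your concern is well placed but matches the paper's own level of rigour. Also note that the paper's displayed formula for $H$ as $\inf_v\{\ip{p}{v}-\cL(x,v)\}$ is a typo for $\sup_v$ (the infimum would typically be $-\infty$, and the stated convexity of $p\mapsto H(x,p)$ as well as all the worked examples presuppose the Legendre transform); your use of $\sup$ in the duality identity $\cL(x,H_p(x,0)) = -H(x,0)$ is the intended reading. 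The worry you raise about boundary behaviour is not actually an obstacle here: the contraction inequality only requires $S$ continuous, not differentiable, and holds up to $\partial E$.
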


In the following proposition, we show that $S$ is a solution to $Hf = 0$ in the viscosity sense. We will not use the theory of viscosity solutions at any other point in the paper, so we refer for definitions to \cite{CIL92} and \cite{CaSi04}.

\begin{lemma} \label{lemma:LDP_gives_entropy}
Suppose the measures $\PR_n$ correspond to Markov processes for which there exists stationary measures $\mu_n$ that satisfy the large deviation principle with rate function $S$. Then $S$ is a viscosity solution to $HS = 0$.
\end{lemma}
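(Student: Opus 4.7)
The plan is to exploit stationarity of $\mu_n$ together with the path-space LDP to obtain a dynamic programming fixed-point identity for $S$, and then to invoke the classical Hamilton-Jacobi viscosity argument. Since $\mu_n$ satisfies the LDP with rate $S$, the initial cost in \eqref{eqn:LDP} is $u_0 = S$; restricting the path-space LDP to the interval $[0,t]$ and applying the contraction principle to the continuous evaluation map $\gamma \mapsto \gamma(t)$ shows that the time-$t$ marginal of $\PR_n$ satisfies an LDP with rate
\[
S_t(y) \;=\; \inf\Bigl\{S(\gamma(0)) + \int_0^t \cL(\gamma(s),\dot\gamma(s))\,\dd s \;:\; \gamma \in \cA\cC,\ \gamma(t) = y\Bigr\}.
\]
By stationarity the time-$t$ marginal is $\mu_n$ itself, and uniqueness of LDP rate functions then forces $S_t = S$ on $E$ for every $t \geq 0$.

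Having identified $S$ as a fixed point of this Nisio-type value semigroup, I would derive $HS = 0$ on $E^\circ$ by the standard test-function argument. For the subsolution property, let $\phi \in C^1$ touch $S$ from above at $x_0 \in E^\circ$, and for arbitrary $v \in \bR^d$ insert the straight-line path $\gamma(s) = x_0 + (s-t)v$ into $S(x_0) = S_t(x_0) \leq S(x_0 - tv) + \int_0^t \cL(\gamma(s),v)\,\dd s$; using $\phi \geq S$ on one side together with $\phi(x_0) = S(x_0)$ on the other, dividing by $t$, sending $t \downarrow 0$, and taking the supremum over $v$ produces $H(x_0, D\phi(x_0)) \leq 0$ via the Legendre duality between $\cL$ and $H$. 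For the supersolution property, with $\phi$ touching $S$ from below at $x_0$, I would pick for each small $t > 0$ a near-minimizer $\gamma^t$ realizing $S_t(x_0)$ up to an $o(t)$ error; combining $\phi(x_0) - \phi(\gamma^t(0)) = \int_0^t \ip{D\phi(\gamma^t(s))}{\dot\gamma^t(s)}\,\dd s$ with near-optimality yields
\[
\int_0^t H(\gamma^t(s), D\phi(\gamma^t(s)))\,\dd s \;\geq\; -o(t),
\]
whence $H(x_0, D\phi(x_0)) \geq 0$ after dividing by $t$ and passing to the limit.

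The main obstacle is controlling the near-minimizers $\gamma^t$ in the supersolution step: one needs them to concentrate on $x_0$ as $t \downarrow 0$ so that the average $\tfrac{1}{t}\int_0^t H(\gamma^t, D\phi(\gamma^t))\,\dd s$ converges to $H(x_0, D\phi(x_0))$ by continuity of $H$ and $D\phi$. This concentration should follow from the superlinear growth of $\cL(x,\cdot)$ that is implicit in the compact-level-set hypothesis on $u$: were the displacement $|\gamma^t(0) - x_0|$ to remain bounded away from zero, the Jensen/coercivity bound on $\int_0^t \cL(\gamma^t,\dot\gamma^t)\,\dd s$ would outstrip the budget $S(x_0) - S(\gamma^t(0)) + o(t)$ permitted by near-optimality, a contradiction.
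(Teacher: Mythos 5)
Your proof is correct and takes essentially the same route as the paper: both arguments rest on the observation that stationarity of $\mu_n$ plus the contraction principle force $S$ to be a fixed point of the dynamic-programming operator $S_t$. Where the paper delegates the viscosity-solution property to Theorem 6.4.5 of Cannarsa and Sinestrari (stated for the time-dependent equation $\partial_t u + H(x,\nabla u) = 0$) and then uses time-independence, you re-derive the stationary equation directly via the classical test-function argument; this is a sound substitution, and your remark on concentration of near-minimizers correctly identifies the only technical point in the supersolution step.
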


\begin{proof}
By a standard argument using dynamic programming, cf. Theorem 6.4.5 in \cite{CaSi04}, we find that for any function $u_0$, the function
\begin{equation*}
u(x,t) = \inf_{\substack{\gamma \in \cA\cC \\ \gamma(t) = x}} u_0(\gamma(0)) + \int_0^t \cL(\gamma(s),\dot{\gamma}(s)) \dd s
\end{equation*}
is a viscosity solution of
\begin{equation*}
\frac{\dd}{\dd t} u(x,t) + H(x, \nabla u(x,t)) = 0
\end{equation*}
on $E \times (0,T)$ where the gradient of $u$ is taken in the spatial dimensions. In the case that $u_0 = S$ is the large deviation rate function of the stationary measures, it follows that $u(\cdot,t) = S$ for all $t \geq 0$ by the contraction principle. As a direct consequence, we find that $S$ is a viscosity solution of $H(x,\nabla S(x)) = 0$ on $E$.
\end{proof}

As a consequence of the definition of viscosity solutions, we obtain that the equation is satisfied in any point where a viscosity solution is differentiable.

\begin{lemma}
Suppose the measures $\PR_n$ correspond to Markov processes for which there exists stationary measures $\mu_n$ that satisfy the large deviation principle with rate function $S$. If $x$ is in the interior of $E$ and is such that $S$ is differentiable at $x$, then $H(x,DS(x)) = 0$.
\end{lemma}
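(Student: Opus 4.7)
The plan is to combine the previous lemma, which shows that $S$ is a viscosity solution of $H(x,\nabla S(x)) = 0$ on $E$, with the standard fact that viscosity solutions satisfy the equation in the classical sense at any point of differentiability. Since $x$ lies in the interior of $E$, there is no issue with the test functions leaving the domain, which is what makes the interior assumption necessary.

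First I would recall the sub/superdifferential formulation of viscosity solutions: for a continuous function $u$, the superdifferential $D^+u(x)$ consists of those $p \in \bR^d$ with
\begin{equation*}
u(y) \leq u(x) + \ip{p}{y-x} + o(|y-x|) \quad \text{as } y \to x,
\end{equation*}
and $D^-u(x)$ is defined with the reverse inequality. A continuous function $u$ is a viscosity subsolution (respectively supersolution) of $H(x,\nabla u) = 0$ precisely when $H(x,p) \leq 0$ for all $p \in D^+u(x)$ (respectively $H(x,p) \geq 0$ for all $p \in D^-u(x)$) at each interior point $x$. This formulation is equivalent to the test-function definition used in the reference \cite{CIL92}.

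Second, I would use the elementary observation that if $u$ is differentiable at $x$ in the classical sense with derivative $Du(x)$, then $Du(x) \in D^+u(x) \cap D^-u(x)$. Indeed, the defining condition of differentiability gives both the upper and lower inequalities with $p = Du(x)$.

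Applying these two facts to $S$: by the previous lemma, $S$ is both a viscosity subsolution and a viscosity supersolution of $H(x,\nabla S(x)) = 0$ on the interior of $E$. At the point $x$ in the interior where $S$ is differentiable, $DS(x) \in D^+S(x)$ yields $H(x,DS(x)) \leq 0$ and $DS(x) \in D^-S(x)$ yields $H(x,DS(x)) \geq 0$, so $H(x,DS(x)) = 0$. There is no real obstacle here; the only thing to verify carefully is that the interiority of $x$ allows us to consider perturbations in every direction, which is precisely what the sub/superdifferential argument requires.
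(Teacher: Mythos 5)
Your proof is correct and follows exactly the route the paper intends: the paper itself gives no proof, stating only that the result is ``a consequence of the definition of viscosity solutions,'' which is precisely the standard sub/superdifferential argument you spell out. Your write-up fills in the details the paper treats as immediate, combining Lemma \ref{lemma:LDP_gives_entropy} with the fact that at a point of classical differentiability the gradient belongs to both $D^+S(x)$ and $D^-S(x)$.
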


\section{Exponential decay of entropy} \label{section:decay_of_entropy}

We start by studying the decay of $S$ along the solutions of the McKean-Vlasov equation. Motivated by the analogous quantities in the theory of (modified) logarithmic Sobolev inequalities, we define the concept of information.

\begin{definition}
Let $H$ and $S$ satisfy Assumption \ref{assumption:basic_assumption}. We define the \textit{information} $I : E^\circ \rightarrow \bR^+$ by
\begin{equation*}
I(x) = - \ip{DS(x)}{H_p(x,0)}.
\end{equation*}
We say that $H$ and $S$ satisfy an \textit{entropy-information inequality} with constant $\kappa > 0$, denoted by (EII)($\kappa$), if for all $x \in E^\circ$:
\begin{equation*}
\kappa S(x) \leq  I(x).
\end{equation*}
\end{definition}

Note that as $I(x) = - \frac{\dd}{\dd t} |_{t = 0} S(x(t))$ for the solution $x(t)$ to the McKean-Vlasov equation with $x(0) = x$, it follows that $I(x) \geq 0$ by Proposition \ref{proposition:Htheorem2}.

\smallskip

This entropy-information inequality is a naturally connected to similar inequalities present in the literature. In the setting of the measure-valued flow generated by the Kolmogorov forward equation of a diffusion operator, the derivative of the entropy along the flow is called the \textit{Fisher information}. Thus, the entropy-information inequality is related to the well known Log-Sobolev Inequality, we refer to Section 5.2 in \cite{BaGeLe14}. For the measure valued flow generated by the Kolmogorov forward equation of a Markov jump process, cf. \cite{CDP09}, the $2\kappa$-entropy-information inequality coincides with the modified logarithmic Sobolev inequality with constant $\kappa$, as we show below.

\begin{remark}
Let $F = \{1,\dots,d\}$, set $E = \cP(F) = \{x \in \bR^d \, | \, \sum x_a = 1, x_a \geq 0\}$. Let $Af(a) = \sum_b r(a,b) \left[f(b) - f(a)\right]$ be the generator of a reversible Markov jump process. In other words, there is a measure $\pi \in E$ such that $\pi_a r(a,b) = \pi_b r(b,a)$. Denote by $\cE$ the Dirichlet form
\begin{equation*}
\cE(f,g) = \frac{1}{2}\sum_{a,b} \pi_a r(a,b)\left[f(b) - f(a)\right]\left[g(b) - g(a)\right],
\end{equation*}
and by $Ent(f) = \sum_a \pi_a f(a) \log f(a)$.

Let $f : F \rightarrow [0,\infty)$ such that $f(a) > 0$ for all $a$ and $\int f \dd \pi = 1$. Then the Modified Logarithmic Sobolev Inequality with constant $\kappa$, is given by
\begin{equation*}
\kappa Ent(f) \leq \frac{1}{2} \cE(f,\log f).
\end{equation*}
On the other hand, the Hamiltonian of the system obtained via the large deviations of the empirical density of independent copies of the Markov process is given by
\begin{equation*}
H(x,p) = \sum_{a,b} x_a r(a,b) \left[e^{p_b - p_a} - 1 \right],
\end{equation*}
and the entropy $S$ is given by $S(x) = \sum_a x_i \log \frac{x_i}{\pi_i}$.

\smallskip

Any $f : F \rightarrow [0,\infty)$ such that $\int f \dd \pi = 1$ induces a measure $x \in E$ by $x_i = f(i) \pi_i$. This way, it is immediately clear that
\begin{equation*}
S(x) = Ent(f).
\end{equation*}
On the other hand the equality $I(x) = \cE(f,\log f)$ follows by a straightforward calculation using the reversibility of $\pi$. We conclude that the MLSI($\kappa$) is equivalent EII($2\kappa$).
\end{remark}

To use the entropy-information inequality for any solution of the McKean-Vlasov equation, we assume additionally that solutions enter the interior of $E$ immediately.

\begin{assumption} \label{assumption:McKean_Vlasov_in_interior}
Let $\{x(t)\}_{t \geq 0}$ solve $\dot{x}(t) = H_p(x(t),0)$, $x(0) = x_0 \in E$. Then for all $t > 0$, we have $x(t) \in E^\circ$.
\end{assumption}

By analogy, the following result is to be expected.

\begin{lemma} \label{lemma:EII_equiv_with_entropy_decay}
Let $H$ and $S$ satisfy Assumptions \ref{assumption:basic_assumption} and \ref{assumption:McKean_Vlasov_in_interior}. 
$H$ satisfies EII($\kappa$) if and only if for all $x_0$ and solutions $\{x(t)\}_{t \geq 0}$ to $\dot{x}(t) = H_p(x(t),0)$ with $x(0) = x_0$ we have
\begin{equation*}
S(x(t)) \leq e^{-\kappa t} S(x(0)).
\end{equation*}
\end{lemma}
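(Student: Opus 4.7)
The plan is to prove both directions via a single identity: for any solution $x(t)$ of the McKean--Vlasov equation with $x(0) = x_0 \in E^\circ$,
\begin{equation*}
\tfrac{\dd}{\dd t} S(x(t)) = \ip{DS(x(t))}{H_p(x(t),0)} = -I(x(t)),
\end{equation*}
which is well-defined and continuous on $(0,\infty)$ by Assumption \ref{assumption:McKean_Vlasov_in_interior} together with $S \in C^2(E^\circ)$ and the fact that $t \mapsto x(t)$ is $C^1$ (the vector field $x \mapsto H_p(x,0)$ is $C^1$ by H(a)). Once this identity is in hand, EII($\kappa$) is nothing but the differential inequality $\tfrac{\dd}{\dd t} S(x(t)) \leq -\kappa S(x(t))$ evaluated along the flow, and the equivalence with exponential decay becomes a Gronwall statement.

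For the implication EII($\kappa$) $\Rightarrow$ exponential decay, fix $x_0 \in E$ and a solution $x(\cdot)$ with $x(0) = x_0$. For any $s > 0$ we have $x(t) \in E^\circ$ for $t \geq s$ by Assumption \ref{assumption:McKean_Vlasov_in_interior}, so $\tfrac{\dd}{\dd t} S(x(t)) = -I(x(t)) \leq -\kappa S(x(t))$ on $[s,\infty)$. Gronwall's lemma applied to $t \mapsto e^{\kappa t} S(x(t))$ on $[s,\infty)$ then yields $S(x(t)) \leq e^{-\kappa(t-s)} S(x(s))$ for all $t \geq s > 0$. Sending $s \downarrow 0$ and using continuity of $S$ on $E$ (Assumption \ref{assumption:basic_assumption}) together with continuity of $t \mapsto x(t)$ gives the claimed bound $S(x(t)) \leq e^{-\kappa t} S(x(0))$ for all $t \geq 0$.

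For the converse, assume the exponential bound. Fix any $x_0 \in E^\circ$ and let $x(t)$ be the solution with $x(0) = x_0$. By continuity of the flow, $x(t) \in E^\circ$ for $t$ in a neighbourhood of $0$, so $t \mapsto S(x(t))$ is $C^1$ there with right-derivative at zero equal to $-I(x_0)$. Since $S(x(t)) \leq e^{-\kappa t} S(x_0)$ and the two sides agree at $t = 0$, dividing by $t > 0$ and sending $t \downarrow 0$ yields
\begin{equation*}
-I(x_0) \;=\; \lim_{t \downarrow 0} \frac{S(x(t)) - S(x_0)}{t} \;\leq\; \lim_{t \downarrow 0} \frac{e^{-\kappa t} - 1}{t} S(x_0) \;=\; -\kappa S(x_0),
\end{equation*}
which is precisely EII($\kappa$) at $x_0$.

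The argument is essentially a textbook application of Gronwall, so there is no single hard step; the only subtlety is the boundary behaviour at $t = 0$ when $x_0 \in \partial E$, where $S$ may fail to be differentiable and $I(x_0)$ is not a priori defined. This is handled in the forward direction by deriving the decay on $(s,\infty)$ and passing to the limit using continuity of $S$, and the converse only needs to be checked on $E^\circ$, where EII($\kappa$) was defined.
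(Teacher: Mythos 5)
Your proof is correct and follows essentially the same route as the paper: the identity $\tfrac{\dd}{\dd t}S(x(t)) = -I(x(t))$ plus Gronwall for the forward implication (with the boundary case handled by starting from $x(s)$, $s>0$, and using continuity of $S$), and differentiation at $t=0$ for the converse. Your write-up is slightly more explicit about why the one-sided difference quotient suffices, but the substance is the same.
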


\begin{proof}
Suppose that $H$ satisfies EII($\kappa$). Consider any trajectory $\{x(t)\}_{t \geq 0}$ that is fully in $E^\circ$. We find by EII($\kappa$) that for every $t \geq 0$
\begin{equation*}
\frac{\dd}{\dd t}S(x(t)) = \ip{DS(x(t))}{H_p(x(t),0)} = -I(x) \leq -\kappa S(x(t)).
\end{equation*}
It follows by Grönwall's inequality that
\begin{equation*}
S(x(t)) \leq e^{-\kappa t} S(x(0)).
\end{equation*}
If $x(0) \notin E^\circ$, then by considering the trajectory started from $x(\varepsilon)$, taking $\varepsilon$ to $0$, the result follows by continuity of $S$.

\smallskip

For the reverse inequality, pick a point $x_0$ and a solution to the McKean-Vlasov equation started from $x_0$. Then $\kappa S(x_0) \leq I(x_0)$ follows by differentiation.
\end{proof}

It is well known that control on the second derivative of the entropy along solutions of the McKean-Vlasov equation yields stronger control on the decay of the entropy, see for example Lemma 2.1 in \cite{CDP09}. The second derivative of the entropy of $S$ gives:
\begin{multline} \label{eqn:second_derivative_entropy}
\frac{\dd^2}{\dd t^2} S(x(t))  = \frac{\dd}{\dd t} \ip{DS(x(t))}{H_p(x(t),0)} \\
= \ip{D^2S(x(t))H_p(x(t),0)}{H_p(x(t),0)} + \ip{DS(x(t))}{H_{px}(x(t),0) H_p(x(t),0)}.
\end{multline}
Note that we write $H_{px}(x,p)$ for the matrix
\begin{equation*}
H_{px}(x,p) \begin{bmatrix} H_{p_1,x_1}(x,p) & \dots & H_{p_1,x_d}(x,p) \\ \vdots & \ddots & \vdots \\
H_{p_d,x_1(x,p)} & \dots & H_{p_d,x_d}(x,p)\end{bmatrix}.
\end{equation*}

We obtain the following result, giving an inequality that implies EII($\kappa$) if there is only one attracting stationary point.

\begin{proposition} \label{proposition:EII_entropy_decay}
Let $H$ and $S$ satisfy Assumptions \ref{assumption:basic_assumption} and \ref{assumption:McKean_Vlasov_in_interior}. Let $\{x(t)\}_{t \geq 0}$ be a solution to the McKean-Vlasov equation $\dot{x} = H_p(x,0)$. Then the following two statements are equivalent.
\begin{enumerate}[(a)]
\item For all $x \in E^\circ$, we have
\begin{equation} \label{eqn:second_derivative_inequality_McKean_Vlasov}
\kappa I(x) \leq  \ip{DS(x)}{H_{px}(x,0) H_p(x,0)} + \ip{D^2 S(x)H_p(x,0)}{H_p(x,0)}.
\end{equation}
\item For all solutions $\{x(t)\}_{t \geq 0}$ of the McKean-Vlasov equation, we have
\begin{equation*}
\frac{\dd}{\dd t} S(x(t)) \geq e^{-\kappa t} \frac{\dd}{\dd t} S(x(t)) |_{t = 0}, \quad \text{ and } \quad I(x(t)) \leq e^{-\kappa t}  I(x(0))
\end{equation*}
\end{enumerate}

Suppose that $S$ is bounded from below and that \eqref{eqn:second_derivative_inequality_McKean_Vlasov} is satisfied. Let $S_\infty := \lim_{t \rightarrow \infty} S(x(t))$, (which exists as $S$ is decreasing along solutions of the McKean-Vlasov equation), then
\begin{equation} \label{eqn:extended_EII}
S(x(t)) - S_\infty \leq e^{-\kappa t} \left(S(x(0)) - S_\infty\right).
\end{equation}
\end{proposition}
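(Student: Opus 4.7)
The whole proposition rests on the identity, valid along any solution $\{x(t)\}_{t\geq 0}$ to the McKean-Vlasov equation that lies in $E^\circ$,
\begin{equation*}
\frac{\dd}{\dd t} S(x(t)) = \ip{DS(x(t))}{H_p(x(t),0)} = -I(x(t)),
\end{equation*}
which, together with \eqref{eqn:second_derivative_entropy}, gives
\begin{equation*}
-\frac{\dd}{\dd t} I(x(t)) = \frac{\dd^2}{\dd t^2} S(x(t)) = \ip{D^2 S(x(t)) H_p(x(t),0)}{H_p(x(t),0)} + \ip{DS(x(t))}{H_{px}(x(t),0) H_p(x(t),0)}.
\end{equation*}
Therefore inequality \eqref{eqn:second_derivative_inequality_McKean_Vlasov} evaluated at $x = x(t)$ is the same as $\frac{\dd}{\dd t} I(x(t)) \leq -\kappa I(x(t))$, which is the quantity that will feed Grönwall.

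The plan is then to prove $(a) \Rightarrow (b)$ by applying Grönwall's inequality to $t \mapsto I(x(t))$, giving $I(x(t)) \leq e^{-\kappa t} I(x(0))$ for any trajectory in $E^\circ$, and reading off the statement about $\frac{\dd}{\dd t} S(x(t))$ from the identity $\frac{\dd}{\dd t} S(x(t)) = -I(x(t))$. For $(b) \Rightarrow (a)$, I would pick an arbitrary $x \in E^\circ$, run the McKean-Vlasov flow starting at $x$ (which stays in $E^\circ$ by Assumption \ref{assumption:McKean_Vlasov_in_interior}), and simply differentiate the two inequalities in (b) at $t = 0$; the first reproduces \eqref{eqn:second_derivative_inequality_McKean_Vlasov} directly via the second-derivative formula, and the second gives the same statement. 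Trajectories starting on $\partial E$ are handled by the same continuity argument used in Lemma \ref{lemma:EII_equiv_with_entropy_decay}: start from $x(\epsilon) \in E^\circ$ and let $\epsilon \downarrow 0$.

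For the final bound \eqref{eqn:extended_EII}, the observation is that \eqref{eqn:second_derivative_inequality_McKean_Vlasov} already yields $I(x(s)) \leq e^{-\kappa s} I(x(0))$. Since $S$ is bounded from below and decreasing along the flow, $S_\infty = \lim_{s \to \infty} S(x(s))$ exists, and integrating $\frac{\dd}{\dd s} S(x(s)) = -I(x(s))$ from $0$ to $\infty$ gives
\begin{equation*}
S(x(0)) - S_\infty = \int_0^\infty I(x(s)) \dd s \leq \int_0^\infty e^{-\kappa s} I(x(0)) \dd s = \frac{1}{\kappa} I(x(0)).
\end{equation*}
Applying this with the trajectory shifted to start at time $t$ gives $\kappa (S(x(t)) - S_\infty) \leq I(x(t))$, so that
\begin{equation*}
\frac{\dd}{\dd t}\left(S(x(t)) - S_\infty\right) = -I(x(t)) \leq -\kappa\left(S(x(t)) - S_\infty\right),
\end{equation*}
and a second application of Grönwall's inequality yields \eqref{eqn:extended_EII}.

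The only subtle step is the very last one: the hypothesis \eqref{eqn:second_derivative_inequality_McKean_Vlasov} is a statement about the \emph{second} derivative of $S$, yet we want to recover exponential decay of the \emph{first}-order quantity $S - S_\infty$. The bridge is precisely the derived bound $\kappa (S(x(t)) - S_\infty) \leq I(x(t))$, which is an EII-type inequality for the shifted entropy $S - S_\infty$ obtained for free from the decay of $I$ and the boundedness of $S$; this is what allows Lemma \ref{lemma:EII_equiv_with_entropy_decay}-style reasoning to close the argument without assuming EII$(\kappa)$ from the outset.
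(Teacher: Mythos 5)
Your proposal is correct and follows essentially the same route as the paper: rewrite \eqref{eqn:second_derivative_inequality_McKean_Vlasov} as $\frac{\dd}{\dd t} I(x(t)) \leq -\kappa I(x(t))$ via \eqref{eqn:second_derivative_entropy}, apply Gr\"onwall to $I$, and for the equivalence differentiate at $t=0$ with the boundary case handled by continuity. For \eqref{eqn:extended_EII} you also arrive at the key intermediate inequality $\kappa\bigl(S(x(t)) - S_\infty\bigr) \leq I(x(t))$ and close with a second Gr\"onwall step; the only cosmetic difference is that the paper obtains this inequality by integrating the differential inequality $\kappa \frac{\dd}{\dd t} S(x(t)) \geq \frac{\dd}{\dd t} I(x(t))$ from $t$ to $T$ and letting $T \to \infty$, whereas you integrate the exact identity $\frac{\dd}{\dd s}S(x(s)) = -I(x(s))$ and then invoke the exponential bound on $I$; both computations use the same ingredients ($I(x(T)) \to 0$, $S(x(T)) \to S_\infty$) and yield the same result.
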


\begin{remark}
If $S(x)$ is convex, $D^2S(x)$ is a positive operator. Thus, a weaker criterion for the exponential decay of entropy is given by
\begin{equation} \label{eqn:reduction:EII_for_convex_S}
\kappa I(x) \leq   \ip{DS(x)}{H_{px}(x,0) H_p(x,0)}.
\end{equation}
\end{remark}

\begin{proof}[Proof of Proposition \ref{proposition:EII_entropy_decay}]
By \eqref{eqn:second_derivative_entropy}, we note that \eqref{eqn:second_derivative_inequality_McKean_Vlasov} can be rewritten as
\begin{equation*}
\frac{\dd}{\dd t} I(x(t)) \leq - \kappa I(x(t)) 
\end{equation*}
As in the proof of Lemma \ref{lemma:EII_equiv_with_entropy_decay}, we obtain the equivalence of (a) and (b).

We proceed with the proof of \eqref{eqn:extended_EII}. First, we integrate from $t$ to $T$ the inequality
\begin{equation*}
\kappa \frac{\dd}{\dd t} S(x(t)) = - \kappa I(x(t)) \geq \frac{\dd}{\dd t} I(x(t)),
\end{equation*}
which yields
\begin{equation*}
\kappa\left[S(x(T)) - S(x(t))\right] \geq  \left[I(x(T)) - I(x(t))\right].
\end{equation*}
As $T \rightarrow \infty$, we find by the first claim of the Lemma that $I(x(T)) \rightarrow 0$. Additionally, as $S$ is decreasing along the solutions of the McKean-Vlasov equation $S(x(T)) \rightarrow S_\infty$. We conclude that
\begin{equation*}
\kappa\left[S(x(t)) - S_\infty\right] \leq I(x(t)).
\end{equation*}
The claim follows as in the proof of Lemma \ref{lemma:EII_equiv_with_entropy_decay}.
\end{proof}

To prove that constants are optimal for examples we will consider below, we have the following two results, the first of which is immediate from the results above.

\begin{lemma} \label{lemma:optimality_converse_inequality}
Let $H$ and $S$ satisfy Assumptions \ref{assumption:basic_assumption} and \ref{assumption:McKean_Vlasov_in_interior}. Let $\{x(t)\}_{t \geq 0}$ be a solution to the McKean-Vlasov equation $\dot{x} = H_p(x,0)$ such that $x(t) \rightarrow x_s$, where $x_s$ is a local minimum of $x \mapsto S(x)$. 

Suppose that in a neighbourhood $U$ of $x_s$, there is an $\kappa > 0$ such that we have the inequality
\begin{equation} \label{eqn:second_derivative_inequality_McKean_Vlasov_other_side}
\kappa S(x) > I(x).
\end{equation}
Then, if $t_0$ is large enough such that $x(t) \in U$ for $t \leq t_0$, we have
\begin{equation*} 
S(x(t)) - S(x_s) > e^{-\kappa t} \left(S(x(t_0)) - S(x_s)\right)
\end{equation*}
for all $t \leq t_0$.
\end{lemma}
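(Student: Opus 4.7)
The plan is to reverse the Gronwall argument from the proof of Lemma \ref{lemma:EII_equiv_with_entropy_decay}. First I would observe that along any solution of the McKean-Vlasov equation, the chain rule combined with Assumption \ref{assumption:basic_assumption} and the definition of $I$ yields
\begin{equation*}
\frac{\dd}{\dd t} S(x(t)) = \ip{DS(x(t))}{H_p(x(t),0)} = -I(x(t)).
\end{equation*}
For each $t$ with $x(t) \in U$, the hypothesis $\kappa S(x) > I(x)$ then converts into the pointwise strict differential inequality $\frac{\dd}{\dd t} S(x(t)) > -\kappa S(x(t))$, which is equivalent to $(e^{\kappa t} S(x(t)))' > 0$ on the time subinterval of $[0,t_0]$ on which the trajectory stays in $U$.

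Next I would integrate this strict monotonicity between the two relevant endpoints and rearrange to obtain the claimed exponential lower bound. The shift by $S(x_s)$ in the statement is the limiting value $\lim_{t \to \infty} S(x(t)) = S(x_s)$, which exists by monotonicity along the flow and is identified as $S(x_s)$ by continuity of $S$ together with $x(t) \to x_s$; subtracting it puts the estimate in the form stated. The existence of a nonempty time subinterval on which $x(t) \in U$ is furnished by the same convergence and the continuity of the flow, so the range of $t$ in the conclusion is never empty.

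The only real obstacle is preserving strictness when passing through the Gronwall step. This is automatic because the premise $\kappa S > I$ is a \emph{pointwise} strict inequality on the open set $U$, and $t \mapsto x(t)$ is continuous, so the strict inequality is never obtained as the limit of a family of non-strict ones and cannot collapse to an equality in the integrated form. The remaining details are bookkeeping that mirrors the proof of Lemma \ref{lemma:EII_equiv_with_entropy_decay} line by line with the inequality reversed.
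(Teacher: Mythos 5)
Your strategy --- reversing the Gronwall argument from Lemma \ref{lemma:EII_equiv_with_entropy_decay} via the pointwise strict inequality $\kappa S > I$ on $U$ --- is the one the paper intends, since it offers no proof and declares the lemma ``immediate from the results above''. The genuine gap is in how you discharge the shift by $S(x_s)$. The differential inequality $\frac{\dd}{\dd t}S(x(t)) > -\kappa S(x(t))$ that you derive integrates to a Gronwall comparison for $S(x(t))$ itself, and ``subtracting $S(x_s)$'' is not the cosmetic relabeling you describe: when $S(x_s) > 0$, a bound of the form $S(x(t)) > c(t)\,S(x(t_0))$ with $c(t) \leq 1$ does \emph{not} imply $S(x(t)) - S(x_s) > c(t)\bigl(S(x(t_0)) - S(x_s)\bigr)$, because one subtracts the full $S(x_s)$ on the left but only $c(t)S(x_s)$ on the right. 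To obtain the claimed estimate one actually needs the shifted differential inequality $\frac{\dd}{\dd t}\bigl(S(x(t)) - S(x_s)\bigr) > -\kappa\bigl(S(x(t)) - S(x_s)\bigr)$, i.e.\ the pointwise hypothesis $\kappa\bigl(S(x) - S(x_s)\bigr) > I(x)$ on $U$, which is strictly stronger than the stated $\kappa S(x) > I(x)$ whenever $S(x_s) > 0$. Your argument is therefore correct only under the unstated assumption $S(x_s) = 0$ (the setting in which Proposition \ref{proposition:optimality_converse_inequality_via_matrix} actually invokes the lemma, and in which the strict hypothesis must be read on $U \setminus \{x_s\}$ since both sides vanish at $x_s$). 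You should either make $S(x_s) = 0$ explicit, or strengthen the local hypothesis to the shifted form and run Gronwall directly on $S(x(\cdot)) - S(x_s)$; as written, ``subtracting it puts the estimate in the form stated'' is precisely the step that does not follow.
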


\begin{proposition}\label{proposition:optimality_converse_inequality_via_matrix}
Let $H$ and $S$ satisfy Assumptions \ref{assumption:basic_assumption} and \ref{assumption:McKean_Vlasov_in_interior}. Furthermore, let $x_s$ be a local minimum of $S$ and stationary: $H_p(x_s,0) = 0$. Assume that $D^2S(x_s)$ is strictly positive definite. 

If $c > 0$ is such that the matrix
\begin{equation*}
c\bONE + 2 H_{px}(x_s,0)
\end{equation*}
is strictly positive definite, than $c$ is an upper bound for the entropy-information inequality. In other words, if $EII(\kappa)$ holds, then $\kappa < c$.
\end{proposition}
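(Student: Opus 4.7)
The plan is to linearise $\mathrm{EII}(\kappa)$ around $x_s$, turning it into a negative semi-definite matrix inequality for the pair $(B,A):=(D^2S(x_s),H_{px}(x_s,0))$, and then to compare with the hypothesis by taking a trace after conjugating away $B$.

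First I would record the boundary conditions at $x_s$. Since $H_p(x_s,0)=0$ gives $I(x_s)=0$, any $\mathrm{EII}(\kappa)$ with $\kappa>0$ forces $S(x_s)=0$; and the interior local-minimum property gives $DS(x_s)=0$. Setting $y:=x-x_s$, Taylor's theorem then yields
\[
S(x)=\tfrac12 y^\top B y+o(|y|^2),\qquad I(x)=-\tfrac12 y^\top\bigl(BA+A^\top B\bigr)y+o(|y|^2).
\]
Plugging these into $\kappa S(x)\le I(x)$, rescaling along an arbitrary test direction $y=tv$ and letting $t\downarrow 0$ shows that the symmetric matrix
\[
M:=\kappa B+BA+A^\top B
\]
must be negative semi-definite.

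Since $B\succ 0$, I would then conjugate by $B^{-1/2}$ and take the trace; by cyclicity, $\mathrm{tr}(B^{-1}M)=\kappa d+2\,\mathrm{tr}(A)\le 0$. On the other hand, the hypothesis that $c\bONE+2A$ is strictly positive definite is equivalent to $cI+A+A^\top\succ 0$, whose trace satisfies $cd+2\,\mathrm{tr}(A)>0$. Subtracting the two scalar inequalities gives $(c-\kappa)d>0$, i.e.\ $\kappa<c$. The only delicate point is the passage from the pointwise scalar inequality $\mathrm{EII}(\kappa)$ to the matrix inequality on $M$; this uses nothing more than the $C^2$ regularity of $H$ and $S$ guaranteed by Assumption~\ref{assumption:basic_assumption} together with the test-vector rescaling. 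No appeal to Lemma~\ref{lemma:optimality_converse_inequality} is needed, although one could alternatively feed the same linearisation into that lemma along a trajectory entering $x_s$.
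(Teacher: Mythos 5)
Your proof is correct, and it takes a genuinely different and more robust route than the paper's. Both arguments begin identically: from $H_p(x_s,0)=0$, $DS(x_s)=0$ and (via $I(x_s)=0$ plus $\mathrm{EII}(\kappa)$) $S(x_s)=0$, one Taylor-expands $S$ and $I$ at $x_s$ to obtain the quadratic forms of $B:=D^2S(x_s)$ and $D^2I(x_s)=-(BA+A^\top B)$, with $A:=H_{px}(x_s,0)$. The arguments then diverge. The paper asserts that, since $B\succ 0$ and $c\bONE+2A$ is strictly positive definite, the symmetrized product $cB+BA+A^\top B$ is strictly positive definite, so that $cS>I$ near $x_s$ and Lemma~\ref{lemma:optimality_converse_inequality} can be invoked to rule out $\mathrm{EII}(\kappa)$ for $\kappa\geq c$. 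That implication does not hold for general non-commuting $A,B$: with $B=\mathrm{diag}(1,100)$ and $A$ skew-symmetric with large enough entries, $c\bONE+2A$ has positive definite symmetric part while $cB+BA+A^\top B$ is indefinite, so the paper's step requires extra structure (it does go through, for instance, in the reversible case where $A=-\tfrac12 H_{pp}(x_s,0)B$). Your argument sidesteps this entirely: you extract from $\mathrm{EII}(\kappa)$ only the necessary matrix inequality $\kappa B+BA+A^\top B\preceq 0$, conjugate by $B^{-1/2}$, and compare traces with the hypothesis $c\bONE+A+A^\top\succ 0$, getting $\kappa d+2\,\mathrm{tr}(A)\leq 0<cd+2\,\mathrm{tr}(A)$ and hence $\kappa<c$. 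This uses only the trace of the hypothesis (a strictly weaker condition), avoids Lemma~\ref{lemma:optimality_converse_inequality} altogether, and carries no hidden commutativity or reversibility assumption.
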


\begin{proof}
As $x_s$ is a stationary point and a local minimum of $S$, we find $H_p(x_s,0) = 0$ and $DS(x_s) = 0$. Thus, we obtain
\begin{equation*}
D^2 I(x_s) = -2 D^2 S(x_s) H_{px}(x_s,0).
\end{equation*}
A Taylor expansion of $c S - I$ around $x = x_s$ yields
\begin{equation*}
(cS-I)(x) = (x-x_s) D^2S(x_s) \left(c\bONE - 2 H_{px}(x_s,0)\right)(x-x_s) + o(|x-x_s|^2).
\end{equation*}
Because $D^2S(x_s)$ and $c\bONE + 2H_{px}(x_s,0)$ are strictly positive-definite, we can find a neighbourhood of $x_s$ on which we can apply Lemma \ref{lemma:optimality_converse_inequality}, which proves the claim.
\end{proof}

\section{Entropic interpolations} \label{section:entropic_interpolations}

Based on the equivalence in the smooth differential geometric setting of a lower bound on the Ricci curvature and the $\kappa$-convexity of the entropy along displacement interpolations, the notion of a lower bound on the Ricci-curvature can be defined in any setting that allows for displacement interpolations and an entropy.

Additionally, in the context of the log-Sobolev inequality, cf. \cite{BaGeLe14}, and in the context of the modified log-Sobolev inequality, cf. \cite{ErMa12}, it is well known that lower bounds on suitably chosen notions of Ricci curvature imply exponential decay of information, and thus entropy.

As it is not clear how to define displacement interpolations in our setting, we need to introduce a notion of interpolations to obtain a similar result for the entropy-information inequality.

\smallskip

L\'{e}onard \cite{Le13,Le12,Le16} introduced and studied entropic interpolations and has shown that displacement interpolations can be obtained as limits of entropic interpolations.

This indicates that entropic interpolations can serve as a generalization of displacement interpolations for the study of lower bounds on Ricci-curvature. In the context where the Hamiltonian corresponds to the large deviation behaviour of the trajectories of the empirical density of independent copies of a process, i.e. \cite{Kr14} L\'{e}onard \cite{Le13} defines an entropic interpolation between $\pi$ and $\nu$ in time $T$ in terms of an $(f,g)$ transform. Using the connection of the $(f,g)$ transform to solutions of the Schr\"{o}dinger problem in \cite[Theorem 3.3]{Le14}, this transform corresponds to the trajectory of measures $\{\mu(t)\}_{0 \leq t \leq T}$, where $\mu(t) := \bQ^*_t$ is the law of $X(t)$ under $\bQ^*$, and where $\bQ^*$ minimizes
\begin{equation*}
\inf \left\{H(\bQ \, | \, \PR) \, \middle| \, \bQ_0 = \pi, \bQ_T = \nu \right\},
\end{equation*}
where $H$ is the relative entropy. This minimization problem can be re-expressed in terms of the path-space large deviation problem of the trajectory of the empirical distribution of independent copies. This re-formulation of the minimization problem generalizes to interacting systems and motivates the following definition.

\begin{definition}
We say that an absolutely continuous trajectory $\gamma^* : [0,T] \rightarrow E$ is an entropic interpolation between $x$ and $y$ in time $T$ if $\gamma^*(0) = x$, $\gamma^*(T) = y$ and
\begin{equation*}
\int_0^T \cL(\gamma^*(s),\dot{\gamma}^*(s)) \dd s = \inf_{\substack{\gamma \in \cA\cC \\ \gamma(0) = x, \gamma(T) = y}} \int_0^T \cL(\gamma(s),\dot{\gamma}(s)) \dd s.
\end{equation*}
\end{definition}

This definition also has connections to weak-KAM theory, cf. \cite{Fa08,CoIt99}, the quantity 
\begin{equation*}
h_t(x,y) := \inf_{\substack{\gamma \in \cA\cC \\ \gamma(0) = x, \gamma(T) = y}} \int_0^T \cL(\gamma(s),\dot{\gamma}(s)) \dd s,
\end{equation*}
is sometimes called the finite-time potential and is used to define the well-studied Peirls barrier and the Ma\~{n}\'{e} potential. The entropic interpolation in this context is sometimes called a Tonelli-minimizer. 

\smallskip

We will make the following assumption in this section, which is necessary in the case that $S$ is not differentiable on the boundary of $E$. In the setting of one-dimensional reversible processes, we will give conditions under which this assumption is always satisfied.

\begin{assumption} \label{assumption:entropic_interpolations_not_on_boundary}
Any entropic interpolation $\gamma : [0,T] \rightarrow E$ is such that for all $t \in (0,T)$ we have $\gamma(t) \in E^\circ$.
\end{assumption}

Note that this assumption implies Assumption \ref{assumption:McKean_Vlasov_in_interior}.

Consider the Hamilton equations:
\begin{equation} \label{eqn:Hamilton_equations_interpolations}
\begin{bmatrix}
\dot{x} \\ \dot{p} 
\end{bmatrix}
= 
\begin{bmatrix} H_p(x,p) \\ - H_x(x,p)\end{bmatrix}. 
\end{equation}

\begin{lemma} \label{lemma:entropic_interpolations_solve_Hamilton_equations}
Let $H$ and $S$ satisfy Assumption \ref{assumption:basic_assumption} and let $x$ be an entropic interpolation satisfying Assumption \ref{assumption:entropic_interpolations_not_on_boundary}. For $t \in (0,T)$, set $p(t) = \cL_v(x(t),\dot{x}(t))$. Then $(x(t),p(t))$ is twice continuously differentiable and solves the Hamilton equations for $t \in (0,T)$.
\end{lemma}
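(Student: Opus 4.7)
The plan is to run the standard calculus-of-variations argument: derive the Euler--Lagrange equation for the minimizer and then translate it into the Hamilton form via the Legendre transform, with a bootstrap step at the end to upgrade regularity. Assumption~\ref{assumption:entropic_interpolations_not_on_boundary} is essential because it guarantees that the curve stays in the open set $E^\circ$ on $(0,T)$, so that admissible variations that keep the endpoints fixed but slightly perturb the curve remain inside $E$, and the strict convexity portion of H(b) applies along the whole trajectory.

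First I would fix $[a,b] \subset (0,T)$ with $x([a,b]) \subset E^\circ$ and, for any $C^\infty_c((a,b);\bR^d)$ variation $\eta$, use minimality of $x$ against the competitor $x + \epsilon\eta$ to obtain the first variation identity
\begin{equation*}
\int_a^b \bigl[ \cL_x(x(s),\dot x(s)) \cdot \eta(s) + \cL_v(x(s),\dot x(s)) \cdot \dot\eta(s) \bigr] \dd s = 0.
\end{equation*}
By Du~Bois-Reymond's lemma this forces the weak Euler--Lagrange equation
\begin{equation*}
\frac{\dd}{\dd s} \cL_v(x(s),\dot x(s)) = \cL_x(x(s),\dot x(s))
\end{equation*}
in the distributional sense, and in particular shows that $t \mapsto p(t) := \cL_v(x(t),\dot x(t))$ is absolutely continuous on compact subsets of $(0,T)$ with $\dot p = \cL_x(x,\dot x)$ a.e.

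Next I would invoke the Legendre duality. Since $p \mapsto H(x,p)$ is strictly convex and $C^2$ on $E^\circ$, the map $v \mapsto \cL_v(x,v)$ is invertible with inverse $p \mapsto H_p(x,p)$, and the identities
\begin{equation*}
H_p(x,p) = v, \qquad H_x(x,p) = -\cL_x(x,v) \qquad \text{whenever } p = \cL_v(x,v)
\end{equation*}
hold. Substituting $p(t) = \cL_v(x(t),\dot x(t))$ into the Euler--Lagrange equation therefore turns it into
\begin{equation*}
\dot x(t) = H_p(x(t),p(t)), \qquad \dot p(t) = - H_x(x(t),p(t)),
\end{equation*}
i.e.\ the Hamilton system~\eqref{eqn:Hamilton_equations_interpolations}.

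The last step, and the one requiring the most care, is the regularity claim. A priori $\dot x$ only exists a.e., so $p$ is only a.e.-defined; the issue is to upgrade to $C^2$. The standard route is Tonelli-type regularity: convexity and $C^2$-smoothness of $\cL$ together with the Euler--Lagrange identity allow one to redefine $\dot x$ on a null set so that $p$ is continuous (in fact absolutely continuous), and then the right-hand side of the Hamilton system is $C^1$ in $(x,p)$. A standard ODE bootstrap then shows that any continuous solution is $C^2$ (and indeed $C^\infty$ to the extent the smoothness of $H$ permits). The main obstacle is precisely this: turning a weak minimizer into a classical solution of the Hamilton ODE, which requires the continuity of $\dot x$, obtained from the invertibility of $p \mapsto H_p(x,p)$ together with the continuity of $p$ derived from the Euler--Lagrange equation.
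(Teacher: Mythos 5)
Your argument follows essentially the same route as the paper, which disposes of the lemma in one line by citing Theorems 6.2.8 and 6.3.3 of Cannarsa--Sinestrari \cite{CaSi04} --- precisely the Euler--Lagrange / Du~Bois-Reymond / Legendre-duality / Tonelli-regularity chain you spell out, with Assumption~\ref{assumption:entropic_interpolations_not_on_boundary} used for exactly the same reason (so that variations and the strict-convexity hypothesis H(b) apply along the whole interior arc). The only place you understate the difficulty is the initial variation step: before integrating $\cL_x\cdot\eta+\cL_v\cdot\dot\eta$ one needs the a priori Lipschitz bound on minimizers (Tonelli's partial regularity) to guarantee integrability, but that bound is part of the cited theorems, so your outline matches the intended proof in substance.
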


\begin{proof}
This result follows as in Theorems 6.2.8 and 6.3.3 in \cite{CaSi04} as we can work in the interior of $E$ due to Assumption \ref{assumption:entropic_interpolations_not_on_boundary}.
\end{proof}

The two components of the Hamilton equations take over the role of equations (14) and (15) in \cite{Le13}. The connection between the first component and the Hamilton equations is immediate, whereas for the second component, (15) in \cite{Le13} describes the evolution of $f$, whereas the second component of the Hamilton equations describes the evolution of $p(t) = \nabla f(x(t))$ along the trajectory of the first variable.

\smallskip

Note that the solution $x(t)$ to the McKean-Vlasov equation is always an entropic interpolation between $x(0)$ and $x(t)$ in time $t$ for any time $t \geq 0$. This corresponds to a solution of the Hamilton equations in which $p(t) = 0$ for all $t \geq 0$.

To study the entropic interpolations, we follow Section 2.7 in \cite{BDGJL02} and introduce the adjoint Hamiltonian.

\subsection{The adjoint Hamiltonian}

\begin{definition}
Let $H$ and $S$ satisfy Assumption \ref{assumption:basic_assumption}. We define the adjoint $H^*$ of $H$ with respect to $S$ for $x \in E^\circ$ and $p \in \bR^d$ by
\begin{equation*}
H^*(x,p) = H(x,DS(x) - p).
\end{equation*}
If $H$ is a Hamiltonian with entropy $S$, we say that $H$ is reversible with respect to $S$ if $H^* = H$.
\end{definition}

The adjoint Hamiltonian is is related to the time-reversal of trajectories, see Lemma \ref{lemma:adjoint_flow_solves_adjoint_Hamilton_equations} below. The terminology of a reversible Hamiltonian corresponds to the picture introduced in Lemma \ref{lemma:LDP_gives_entropy}. If $H$ is the Hamiltonian corresponding to a sequence of reversible processes, and $S$ is the corresponding entropy of the stationary and reversible measures, then $H$ will be reversible with respect to $S$.

\begin{remark}
Even tough it holds for most one-dimensional examples in this paper that $H = H^*$, a non reversible one-dimensional example is obtained by considering the large deviation behaviour of the average of $n$ independent Levy processes on $\bR$ with generator
\begin{equation*}
Af(x) = \frac{1}{2} f''(x) - (x + 1) f'(x) + f(x+1) - f(x),
\end{equation*}
which corresponds to a Hamiltonian of the form
\begin{equation*}
H(x,p) = \frac{1}{2}p^2 - (x+1)p + e^{p} -1.
\end{equation*}
\end{remark}

As in Section 2.7 of \cite{BDGJL02}, we can relate the adjoint Hamiltonian to the reversal of time.

\begin{lemma} \label{lemma:adjoint_flow_solves_adjoint_Hamilton_equations}
Let $H$ and $S$ satisfy Assumption \ref{assumption:basic_assumption}. Fix some time $T > 0$. The curve $(x(t),p(t))_{0 < t < T}$ solves the Hamilton equations for $H$ if and only if $(x^*(t), p^*(t))_{0 < t < T} := (x(T-t),DS(x(T-t)) - p(T-t))_{0 < t < T}$ solves the Hamilton equations for $H^*$.
\end{lemma}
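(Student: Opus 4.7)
The plan is a direct computation. First I would compute the partial derivatives of $H^*(x,p) = H(x, DS(x) - p)$ using the chain rule (this is legitimate because $S$ is twice continuously differentiable on $E^\circ$ by Assumption S(a) and $H$ is $C^2$ by H(a)). I would get
\begin{align*}
H^*_p(x,p) &= -H_p(x, DS(x) - p), \\
H^*_x(x,p) &= H_x(x, DS(x) - p) + D^2S(x)\, H_p(x, DS(x) - p).
\end{align*}

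Next, assuming $(x(t),p(t))$ solves the Hamilton equations for $H$ on $(0,T)$, I would differentiate the candidate $(x^*(t),p^*(t)) = (x(T-t), DS(x(T-t)) - p(T-t))$ directly. The key algebraic identity, which I would record separately, is
\begin{equation*}
DS(x^*(t)) - p^*(t) = p(T-t),
\end{equation*}
which instantly gives the first component:
\begin{equation*}
\dot{x}^*(t) = -\dot{x}(T-t) = -H_p(x(T-t),p(T-t)) = H^*_p(x^*(t),p^*(t)).
\end{equation*}

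For the second component, I would compute
\begin{equation*}
\dot{p}^*(t) = D^2S(x(T-t))(-\dot{x}(T-t)) + \dot{p}(T-t) = -D^2S(x(T-t))H_p(x(T-t),p(T-t)) - H_x(x(T-t),p(T-t)),
\end{equation*}
and compare with
\begin{equation*}
-H^*_x(x^*(t),p^*(t)) = -H_x(x(T-t),p(T-t)) - D^2S(x(T-t))H_p(x(T-t),p(T-t)).
\end{equation*}
These agree because the Hessian $D^2S(x(T-t))$ is symmetric, which is the one place the $C^2$ hypothesis on $S$ is genuinely used.

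For the converse, rather than redoing the computation, I would observe that the pair of transformations is involutive: $H^{**}(x,p) = H^*(x, DS(x) - p) = H(x, DS(x) - (DS(x)-p)) = H(x,p)$, and applying the time-reversal map $(x,p) \mapsto (x(T-\cdot), DS(x(T-\cdot)) - p(T-\cdot))$ twice returns the original curve. Hence the reverse implication follows from the forward one applied to $H^*$. The only subtle step is the chain-rule computation of $H^*_x$; once that is in hand, the rest is bookkeeping, and the symmetry of $D^2S$ is what makes the two matrix-vector products in the $\dot{p}^*$ equation match.
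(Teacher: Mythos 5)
Your proof is correct and follows exactly the paper's approach: a chain-rule computation of $H^*_p$ and $H^*_x$, direct differentiation of the time-reversed pair $(x^*,p^*)$, and involutivity ($H^{**}=H$ together with the transformation being its own inverse) for the converse. In fact your treatment of the $\dot p^*$ equation is the more careful one: you correctly track both $D^2S$ contributions — the $D^2S(x(T-t))\dot x(T-t)$ term coming from differentiating $DS(x(T-t))$ in $p^*(t)$, and the $D^2S\,H_p$ term in the chain-rule expansion of $H^*_x$ — and observe that they match precisely because the Hessian is symmetric (hence the role of $S \in C^2$), whereas the paper's displayed computation writes $\frac{\dd}{\dd t}p^*(t)=\frac{\dd}{\dd t}p(T-t)$ and $H_x(x(T-t),p(T-t))=H^*_x(\cdot,\cdot)$, eliding exactly those two offsetting pieces; your version makes the cancellation explicit.
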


\begin{proof}
Let $(x(t),p(t))_{0 < t < T}$ solve the Hamilton equations. First note that $H^*_p(x,p) = - H_p(x,DS(x) - p)$ by definition. We look at the derivative of $x^*(t)$:
\begin{align*}
\frac{\dd}{\dd t} x^*(t) & = \frac{\dd}{\dd t} x(T-t) \\
& = - \dot{x}(T-t) \\
& = - H_p(x(T-t),p(T-t)) \\
& = H^*_p(x(T-t),DS(x(T-t)) - p(T-t)) \\
& = H^*_p(x^*(t), p^*(t)).
\end{align*}
Secondly, we consider the derivative of $p^*(t)$:
\begin{align*}
\frac{\dd}{\dd t} p^*(t) & = \frac{\dd}{\dd t} p(T-t) \\
& = - \dot{p}(T-t) \\
& = H_x(x(T-t),p(T-t)) \\
& = H^*_x(x(T-t),DS(x(T-t)) - p(T-t)) \\
& = H^*_x(x^*(t),p^*(t)).
\end{align*}
So indeed $(x^*(t),p^*(t))_{0 < t < T}$ solve the Hamilton equations for $H^*$. The second implication of the lemma follows from the first one and the fact that $H^{**} = H$.
\end{proof}

Denote by $\overline{H}$ the infimal convolution of $H$ and the time inverse of $H^*$.
\begin{equation*}
\overline{H}(x,p) = 2 H\left(x, \frac{1}{2}p + \frac{1}{2} DS(x)\right) = \inf\left\{H(x,p-q) + H^*(x,-q) \, \middle| \, q \in \bR^d \right\}.
\end{equation*}
It follows that
\begin{equation*}
\sup_p \ip{p}{v} - \overline{H}(x,p) = \cL(x,v) + \cL^*(x,-v).
\end{equation*}

Appropriately changing the momentum of a entropic interpolation yields an solution to the Hamilton equations for $\overline{H}$.

\begin{lemma} \label{lemma:symmetrized_flow_solves_adjoint_Hamilton_equations}
Let $H$ and $S$ satisfy Assumption \ref{assumption:basic_assumption}. Fix some time $T > 0$. The curve $(x(t),p(t))_{0 < t < T}$ solves the Hamilton equations for $H$ if and only if $(x(t),2p(t) - DS(x(t)))_{0 < t < T}$ solves the Hamilton equations for $\overline{H}$.
\end{lemma}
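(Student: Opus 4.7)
The plan is to verify both implications by a direct chain-rule computation. Set $\tilde p(t) := 2p(t) - DS(x(t))$ and observe that, for each fixed $x$, the affine map $p \mapsto 2p - DS(x)$ is a smooth bijection with inverse $\tilde p \mapsto \tfrac{1}{2}(\tilde p + DS(x))$. Consequently, at every time $t$ we can switch between $p(t)$ and $\tilde p(t)$ freely, and the two directions of the equivalence reduce to the same calculation.

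The key step is to compute the partial derivatives of $\overline H$ at $(x,\tilde p)$ from the defining formula $\overline H(x,\tilde p) = 2H\bigl(x, \tfrac12 \tilde p + \tfrac12 DS(x)\bigr)$. Writing $p = \tfrac12 (\tilde p + DS(x))$ for brevity and applying the chain rule using H(a) and S(a), one obtains $\overline H_p(x,\tilde p) = H_p(x,p)$, while differentiation in the $x$-slot produces both a direct contribution and a contribution from the $x$-dependence of $DS(x)$ inside the second argument of $H$:
\[
\overline H_x(x,\tilde p) \;=\; 2 H_x(x,p) + D^2 S(x)\, H_p(x,p).
\]

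With these identities in hand the verification is mechanical. Assuming $(x,p)$ solves Hamilton's equations for $H$, the first equation for $\overline H$ is immediate: $\dot x = H_p(x,p) = \overline H_p(x,\tilde p)$. For the second, differentiate $\tilde p(t) = 2p(t) - DS(x(t))$ and substitute:
\[
\dot{\tilde p} \;=\; 2\dot p - D^2 S(x)\,\dot x \;=\; -2 H_x(x,p) - D^2 S(x)\, H_p(x,p) \;=\; -\overline H_x(x,\tilde p).
\]
For the converse, starting from a solution $(x,\tilde p)$ of the Hamilton equations for $\overline H$ and setting $p := \tfrac12(\tilde p + DS(x))$, the same chain-rule identities run in reverse: $\dot x = H_p(x,p)$ is immediate, and expressing $\dot p = \tfrac12 \dot{\tilde p} + \tfrac12 D^2 S(x)\dot x$ and using $\dot{\tilde p} = -\overline H_x(x,\tilde p)$ produces exactly the cancellation $\tfrac12 D^2 S(x) H_p(x,p) - \tfrac12 D^2 S(x) H_p(x,p) = 0$, leaving $\dot p = -H_x(x,p)$.

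I do not expect any real obstacle here beyond correct bookkeeping of the cross-term $D^2 S(x) H_p(x,p)$: it is crucial that it enters $\overline H_x$ with a plus sign and enters $\dot{\tilde p}$ via $-D^2 S(x)\dot x$ with the right sign to make the identity close. The smoothness required to take these derivatives along $(x(t),p(t))$ is supplied by H(a), S(a), and Lemma \ref{lemma:entropic_interpolations_solve_Hamilton_equations}, which guarantees that solutions to the $H$-Hamilton equations remain $C^2$ on $(0,T)$.
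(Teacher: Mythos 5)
Your proof is correct and follows essentially the same route as the paper's: compute $\overline H_p(x,\tilde p)=H_p(x,p)$ and $\overline H_x(x,\tilde p)=2H_x(x,p)+D^2S(x)\,H_p(x,p)$ from the definition $\overline H(x,\tilde p)=2H(x,\tfrac12\tilde p+\tfrac12 DS(x))$, then use the chain rule on $\tilde p(t)=2p(t)-DS(x(t))$ and observe the $D^2S\cdot H_p$ cross-terms cancel. The only cosmetic difference is that you run both implications explicitly, while the paper collapses them into a single ``if and only if'' after establishing the two identities.
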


\begin{proof}
Note that for all $x \in E$ and $p \in \bR^d$, we have $\overline{H}_p(x,q) = H_p\left(x,\frac{1}{2}q + \frac{1}{2}DS(x) \right)$. This establishes that $H_p(x(t),p(t)) = \overline{H}_p(x(t),2p(t) - DS(x(t)))$.

\smallskip

For the equivalences of the evolution of the momenta, an elementary calculation yields for all $x \in E^\circ$ and $q \in \bR^d$ that
\begin{equation*}
\overline{H}_x(x,q) = 2 H_x\left(x,\frac{1}{2}q + \frac{1}{2}DS(x) \right) + H_p\left(x,\frac{1}{2}q + \frac{1}{2}DS(x) \right) D^2 S(x).
\end{equation*}
This gives for all $t \in (0,T)$ that
\begin{equation*}
- \overline{H}_x(x(t),2p(t) - DS(x(t))) = 2 H_x\left(x(t),p(t) \right) + H_p\left(x(t),p(t) \right) D^2 S(x(t)).
\end{equation*}
On the other hand, we have
\begin{equation*}
\frac{\dd}{\dd t} (2p(t) - DS(x(t)) = \frac{\dd}{\dd t} 2p(t) - H_p(x(t),p(t)) D^2S(x(t)).
\end{equation*}
Combining these last two equations yields that $\dot{p}(t) = - H_x(x(t),p(t))$ if and only if $\frac{\dd}{\dd t} (2p(t) - DS(x(t))) = - \overline{H}_x(x(t),2p(t) - DS(x(t)))$.
\end{proof}

Denote by $\overline{\cL}(x,v) = \cL(x,v) + \cL^*(x,-v)$ the Lagrangian obtained from $\overline{H}$. The following proposition is immediate from the two previous lemmas.

\begin{proposition} \label{proposition:equivalence_entropic_interpolations}
Let $H$ and $S$ satisfy Assumption \ref{assumption:basic_assumption}. Fix some time $T > 0$ and a curve $(x(t))_{0 < t < T}$ between $x(0) = x_0$ and $x(T) = x_T$. The following are equivalent
\begin{enumerate}[(a)]
\item $x$ is an entropic interpolation from $x_0$ to $x_T$ in time $T$ for $\cL$,
\item $t \mapsto x(T-t)$ is an entropic interpolation from $x_T$ to $x_0$ in time $T$ for $\cL^*$.
\item $x$ is an entropic interpolation from $x_0$ to $x_T$ in time $T$ for $\overline{\cL}$.
\end{enumerate}
\end{proposition}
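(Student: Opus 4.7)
The plan is to prove the three equivalences by identifying the three actions up to a boundary term that depends only on the endpoints $x_0$ and $x_T$. The engine is a single pointwise Legendre-transform identity
$$\cL^*(x,v) = \cL(x,-v) + \ip{DS(x)}{v},$$
obtained by substituting $q = DS(x) - p$ in the supremum defining $\cL^*$ from $H^*(x,p) = H(x, DS(x)-p)$.

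For (a) $\Leftrightarrow$ (b), I would set $y(t) := x(T-t)$, so $\dot y(t) = -\dot x(T-t)$. Applying the identity above and then changing variables $s = T-t$ yields
$$\int_0^T \cL^*(y(t), \dot y(t))\dd t = \int_0^T \cL(x(s), \dot x(s))\dd s + \int_0^T \ip{DS(x(s))}{-\dot x(s)}\dd s.$$
Since by Assumption \ref{assumption:entropic_interpolations_not_on_boundary} the relevant curve lies in $E^\circ$ on $(0,T)$, the chain rule and the fundamental theorem of calculus apply to $s \mapsto S(x(s))$, and after extending to the endpoints by continuity of $S$ the last integral equals $S(x_0) - S(x_T)$. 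Hence the $\cL^*$-action of $y$ and the $\cL$-action of $x$ differ by the constant $S(x_0) - S(x_T)$. Because time-reversal is a bijection between absolutely continuous curves from $x_0$ to $x_T$ and those from $x_T$ to $x_0$, and the discrepancy depends only on the endpoints, minimizers correspond.

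For (a) $\Leftrightarrow$ (c), the same identity gives $\overline{\cL}(x,v) = \cL(x,v) + \cL^*(x,-v) = 2\cL(x,v) - \ip{DS(x)}{v}$, and integrating along $x$ produces
$$\int_0^T \overline{\cL}(x,\dot x)\dd t = 2\int_0^T \cL(x,\dot x)\dd t + S(x_0) - S(x_T).$$
Again the difference is a constant in $(x_0,x_T)$, so the two minimizations over curves with $x(0)=x_0$ and $x(T)=x_T$ have the same set of minimizers.

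The main subtlety is not algebraic but concerns the boundary of $E$: since $S$ is only assumed $C^1$ on $E^\circ$, the chain-rule identity $\int_0^T \ip{DS(x)}{\dot x}\dd t = S(x_T) - S(x_0)$ has to be justified by combining Assumption \ref{assumption:entropic_interpolations_not_on_boundary} on $(0,T)$ with the continuity of $S$ at the endpoints. An alternative route, matching the author's phrase \emph{immediate from the two previous lemmas}, is to transport the curve through Lemmas \ref{lemma:adjoint_flow_solves_adjoint_Hamilton_equations} and \ref{lemma:symmetrized_flow_solves_adjoint_Hamilton_equations} at the Hamilton-equations level, but upgrading critical points to minimizers reduces once again to the same action identities, so the direct approach above is actually the more economical one.
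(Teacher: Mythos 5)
Your route is genuinely different from the paper's. The paper dispatches the proposition with a one-line appeal to Lemmas \ref{lemma:adjoint_flow_solves_adjoint_Hamilton_equations} and \ref{lemma:symmetrized_flow_solves_adjoint_Hamilton_equations}, which operate at the level of solutions of the Hamilton equations, i.e.\ critical points of the action; your proof instead works directly with the minimization problem defining an entropic interpolation. This is arguably the more compelling argument, since the cited lemmas by themselves do not upgrade a correspondence of critical points to a correspondence of minimizers, whereas your pointwise Legendre identity $\cL^*(x,v)=\cL(x,-v)+\ip{DS(x)}{v}$ (correctly derived from $H^*(x,p)=H(x,DS(x)-p)$ by substituting $q=DS(x)-p$) turns the three actions into one another up to a boundary term, so that time-reversal and the change of Lagrangian give bijections that preserve the minimizer set exactly. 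The consequent identities $\int_0^T\cL^*(x(T-\cdot),\dot x(T-\cdot))=\int_0^T\cL(x,\dot x)+S(x_0)-S(x_T)$ and $\overline{\cL}(x,v)=2\cL(x,v)-\ip{DS(x)}{v}$ are both exact.

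The one soft spot is the chain-rule step, and your proposed fix is aimed at the wrong target. You invoke Assumption \ref{assumption:entropic_interpolations_not_on_boundary} to justify $\int_0^T\ip{DS(x(s))}{\dot x(s)}\,\dd s=S(x_T)-S(x_0)$, but that assumption only constrains \emph{entropic interpolations}, while the argument that the boundary discrepancy ``depends only on endpoints'' must be applied to \emph{every} finite-cost competitor curve from $x_0$ to $x_T$ in order to show that the infima over the two classes of curves differ by $S(x_0)-S(x_T)$. A competitor is not a priori an entropic interpolation and may touch $\partial E$ on a set of positive measure, where $DS$ need not be defined and $s\mapsto S(\gamma(s))$ need not be absolutely continuous (the paper itself points to Fichtenholz's example before Lemma \ref{lemma:S_along_flow_is_AC}). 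This is precisely the difficulty the paper addresses in the one-dimensional reversible case via Lemmas \ref{lemma:S_along_flow_is_AC} and \ref{lemma:decomposition_Lagrangian}, whose final claim is exactly the integral identity you need, restricted to finite-$\cL$-cost curves. So your proof has the same latent gap in the general multidimensional setting that the paper's terse one-liner also leaves untreated; the constructive comparison is that you name the issue and the paper does not, but Assumption \ref{assumption:entropic_interpolations_not_on_boundary} alone does not close it --- one needs something like an $S$-along-the-flow absolute-continuity lemma for arbitrary finite-cost competitors, in the spirit of Lemma \ref{lemma:S_along_flow_is_AC}.
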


For reversible one-dimensional Hamiltonians, we give conditions under which Assumption \ref{assumption:entropic_interpolations_not_on_boundary} is always satisfied.

\begin{proposition} \label{proposition:trajectories_in_interior}
Suppose $E = [a,b]$ and $H$ and $S$ satisfy Assumption \ref{assumption:basic_assumption} and suppose that: 
\begin{enumerate}[(a)]
\item $H = H^*$.
\item $H_p(a,0) > 0$ and $H_p(b,0) < 0$.
\item The maps $x \mapsto \cL(x,0)$ and $x \mapsto S(x)$ are decreasing on an open neighbourhood $U_{a}$ of $a$ and increasing on an open neighbourhood $U_b$ of $b$.
\item We have
\begin{equation*}
\lim_{x \downarrow a} \frac{\cL(x,0) - \cL(a,0)}{S(x) - S(a)} = \infty, \quad
\lim_{x \uparrow b} \frac{\cL(x,0) - \cL(b,0)}{S(x) - S(b)} = \infty.
\end{equation*}
\end{enumerate}
Then all entropic interpolations $\{x(t)\}_{0 \leq t \leq T}$ satisfy $x(t) \in E^\circ$ for $t \in (0,T)$.
\end{proposition}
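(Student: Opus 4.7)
I would prove the proposition by contradiction: assume $\gamma : [0,T] \to E$ is an entropic interpolation with $\gamma(t^*) \in \{a,b\}$ for some $t^* \in (0,T)$. By the obvious symmetry between the two boundary points, only the case $\gamma(t^*)=a$ needs treatment, and the plan is to construct a competing curve $\tilde\gamma$ with identical boundary values $\gamma(0),\gamma(T)$ but strictly smaller action, contradicting optimality. The key preliminary step is to pass to the symmetric Lagrangian via Proposition~\ref{proposition:equivalence_entropic_interpolations}: reversibility $H=H^*$ combined with the identity $\cL^*(x,-v) = \cL(x,v) - \ip{DS(x)}{v}$ yields $\overline{\cL}(x,v) = 2\cL(x,v) - \ip{DS(x)}{v}$, which is symmetric in $v$, and by strict convexity of $H$ in $p$ satisfies $\overline{\cL}(x,v) \ge \overline{\cL}(x,0) = 2\cL(x,0)$ with equality only at $v=0$. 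Since the integrals of $\overline{\cL}$ and $\cL$ along $\gamma$ differ only by the boundary quantity $S(\gamma(T))-S(\gamma(0))$, comparing competing curves in $\overline{\cL}$ is equivalent to comparing them in $\cL$.

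I would then analyze the relatively open set $\gamma^{-1}(U_a)\subset[0,T]$ and the connected component containing $t^*$. After possibly shrinking $U_a$ to an interval $[a,a+\epsilon)$ with $\epsilon$ small (chosen so that $\gamma(0),\gamma(T)$ lie outside $[a,a+\epsilon)$ whenever $\gamma(0)>a$ and $\gamma(T)>a$), two configurations remain. In the \emph{interior configuration}, the component is some $(s,u)\subset(0,T)$ with $\gamma(s) = \gamma(u) = a+\epsilon$; here I would replace $\gamma|_{[s,u]}$ by the constant $\tilde\gamma \equiv a+\epsilon$. Monotonicity (c) gives $\cL(\gamma(t),0) \ge \cL(a+\epsilon,0)$, and hypothesis (d) forces $\cL(\cdot,0)$ to be strictly decreasing on a right-neighborhood of $a$ (otherwise the numerator of the limit in (d) would vanish identically for $x$ close to $a$), so $\cL(\gamma(t),0) > \cL(a+\epsilon,0)$ on the nonempty open set $\{t\in(s,u): \gamma(t)<a+\epsilon\}$. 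Integrating,
\[
\int_s^u \overline{\cL}(\gamma,\dot\gamma)\,ds \;\ge\; 2\int_s^u \cL(\gamma,0)\,ds \;>\; 2(u-s)\cL(a+\epsilon,0) \;=\; \int_s^u \overline{\cL}(\tilde\gamma,0)\,ds,
\]
contradicting optimality of $\gamma$.

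The \emph{irreducible boundary configuration} is when $\gamma$ sits identically at $a$ on some sub-interval $[s_0,u_0]\subseteq[0,T]$, so that no shrinking of $U_a$ detaches the component from $a$. For this case I would modify $\gamma$ only on $[s_0,u_0]$ by a bump of height $\epsilon$ and ramp duration $\delta$: linear ascent from $a$ to $a+\epsilon$ over $[s_0,s_0+\delta]$, plateau at $a+\epsilon$ on $[s_0+\delta,u_0-\delta]$, linear descent back to $a$ on $[u_0-\delta,u_0]$, so that $\tilde\gamma(s_0)=\tilde\gamma(u_0)=a$. Symmetry of $\overline{\cL}$ in $v$ eliminates the odd-order term in its Taylor expansion around $v=0$, so each ramp contributes $\int \overline{\cL}(\tilde\gamma,0)\,dt + O(\epsilon^2/\delta)$; relative to the original $\int_{s_0}^{u_0}\overline{\cL}(\gamma,0)\,ds = 2(u_0-s_0)\cL(a,0)$, the savings are of order $(u_0-s_0)\left[\cL(a,0)-\cL(a+\epsilon,0)\right]$ minus an $O(\epsilon^2/\delta)$ error. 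Hypothesis (d) is designed exactly for this comparison: choosing $\delta=\sqrt\epsilon$ (or a similarly tuned rate) and $\epsilon$ sufficiently small, the savings (which by (d) are of order strictly larger than $S(a)-S(a+\epsilon)$) dominate the quadratic error, producing the strict improvement.

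The main obstacle is this irreducible boundary case: since the endpoints of the modification interval are pinned at $a$, the constant-replacement trick of the interior case fails, and the variational gain must be extracted purely from the rate comparison supplied by hypothesis (d). Everywhere else the argument is a relatively clean application of the monotonicity (c) together with the symmetry of $\overline{\cL}$.
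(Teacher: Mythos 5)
Your overall plan — argue by contradiction, split into an interior-touching case and a genuine boundary-interval case, and construct a strictly cheaper competitor — matches the structure of the paper's proof, and your treatment of the interior configuration (replace the excursion below level $a+\epsilon$ by the constant curve at $a+\epsilon$) is essentially the paper's second step, correctly exploiting the monotonicity of $\cL(\cdot,0)$ from (c) and the decomposition of $\cL$ (or equivalently the symmetry of $\overline{\cL}$).

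The gap is in the irreducible boundary case. You replace the flat segment at $a$ by a linear ramp of height $\epsilon$ and duration $\delta$, and the excess ramp cost is $O(\epsilon^2/\delta)$ with a constant controlled by $\overline{\cL}_{vv}$ near the boundary. To win, you need the plateau savings $(u_0-s_0)\bigl[\cL(a,0)-\cL(a+\epsilon,0)\bigr]$ to dominate this for some admissible $\delta < (u_0-s_0)/2$, which requires a quantitative lower bound of the form $\cL(a,0)-\cL(a+\epsilon,0) \gtrsim \epsilon^2$. Hypothesis (d) supplies no such bound: it only compares $\cL(a,0)-\cL(a+\epsilon,0)$ to $S(a)-S(a+\epsilon)$, and the latter can tend to zero at any rate as $\epsilon\downarrow 0$. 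The assertion that the savings, being of larger order than $S(a)-S(a+\epsilon)$, therefore dominate the ramp error is a non sequitur; you have compared the savings to the wrong quantity. The paper's resolution is to take the ramp to be the McKean--Vlasov flow started at $a$ (which enters the interior by hypothesis (b)): this forward ramp has \emph{exactly zero} Lagrangian cost, and by the decomposition Lemma \ref{lemma:decomposition_Lagrangian} the time-reversed descent costs exactly $S(a)-S(z)$, where $z$ is the height reached. The total change in action then reduces to $(u_0-s_0)\bigl[\cL(z,0)-\cL(a,0)\bigr] + \bigl[S(a)-S(z)\bigr]$ plus a harmless negative term, and this is precisely the combination that hypothesis (d) makes negative as $z\downarrow a$. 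In short, (d) is tailored to the McKean--Vlasov ramp, not to a linear one; without that choice the crucial cancellation is lost, and the argument as you have written it does not close.
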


As the proof of this proposition is independent of the rest of the results, we postpone the proof until Section \ref{section:proof_entropic_interpolations_in_interior}.

\subsection{The evolution of entropy along an entropic interpolations}

Analogous to the definition of $\cL$, we define $\cL^*$ to be the Lagrangian corresponding to $H^*$, i.e. for $x \notin \partial E$, we set $\cL^*(x,v) = \sup_p \ip{p}{v} - H^*(x,p)$. The following result has been implicitly found in (2.2) of \cite{BDGJL02}.

\begin{lemma}\label{lemma:derivative_of_entropy}
Let $H$ and $S$ satisfy Assumption \ref{assumption:basic_assumption} and let $\gamma : [0,T] \rightarrow E$ be an absolutely continuous trajectory and let $t$ be a time at which $\gamma$ is differentiable and $\gamma(t) \in E^\circ$. Define the time-backward trajectory $\gamma^*(s) := \gamma(T-s)$. Then we have
\begin{equation*}
\frac{\dd}{\dd t} S(\gamma(t)) = \cL(\gamma(t),\dot{\gamma}(t)) - \cL^*(\gamma^*(T-t),\dot{\gamma}^*(T-t)).
\end{equation*}
In particular, if $H = H^*$, it follows that
\begin{equation*}
\frac{\dd}{\dd t} S(\gamma(t)) = \cL(\gamma(t),\dot{\gamma}(t)) - \cL(\gamma(t),-\dot{\gamma}(t)).
\end{equation*}
\end{lemma}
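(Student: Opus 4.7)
The strategy is purely algebraic: the claim is, up to rewriting via $\gamma^*$, the pointwise identity
\[
\ip{DS(x)}{v} = \cL(x,v) - \cL^*(x,-v), \qquad x \in E^\circ,\ v \in \bR^d,
\]
applied at $x = \gamma(t)$, $v = \dot{\gamma}(t)$. So the plan splits into two pieces: (i) reduce the left-hand side of the formula to $\ip{DS(\gamma(t))}{\dot{\gamma}(t)}$, (ii) prove the pointwise identity for $\cL$ and $\cL^*$.

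For (i), I would invoke Assumption \ref{assumption:basic_assumption} S(a): since $\gamma(t) \in E^\circ$ and $S$ is $C^2$ on $E^\circ$, $S$ is in particular differentiable at $\gamma(t)$, and since $\gamma$ is assumed differentiable at the given $t$, the chain rule gives $\frac{\dd}{\dd t}S(\gamma(t)) = \ip{DS(\gamma(t))}{\dot{\gamma}(t)}$.

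For (ii), I would just perform the change of variables in the Legendre transform defining $\cL^*$. By definition,
\[
\cL^*(x,v) = \sup_{p \in \bR^d} \left\{ \ip{p}{v} - H^*(x,p) \right\} = \sup_{p \in \bR^d} \left\{ \ip{p}{v} - H(x,DS(x) - p) \right\}.
\]
Substituting $q := DS(x) - p$ (so $p = DS(x) - q$ and the supremum over $p$ becomes a supremum over $q$), one gets
\[
\cL^*(x,v) = \ip{DS(x)}{v} + \sup_{q \in \bR^d}\left\{ \ip{q}{-v} - H(x,q)\right\} = \ip{DS(x)}{v} + \cL(x,-v).
\]
Replacing $v$ by $-v$ and rearranging yields the pointwise identity above. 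Plugging in $x = \gamma(t)$, $v = \dot{\gamma}(t)$ and combining with step (i) produces
\[
\frac{\dd}{\dd t} S(\gamma(t)) = \cL(\gamma(t),\dot{\gamma}(t)) - \cL^*(\gamma(t),-\dot{\gamma}(t)).
\]

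Finally, I would translate to the $\gamma^*$ notation: since $\gamma^*(s) = \gamma(T-s)$, we have $\gamma^*(T-t) = \gamma(t)$ and $\dot{\gamma}^*(T-t) = -\dot{\gamma}(t)$, so $\cL^*(\gamma(t),-\dot{\gamma}(t)) = \cL^*(\gamma^*(T-t),\dot{\gamma}^*(T-t))$. The special case $H = H^*$ gives $\cL^* = \cL$ and the second displayed equation is immediate. There is no genuine obstacle; the only thing to be slightly careful about is that the differentiability of $S$ at $\gamma(t)$ (needed for the chain rule) is ensured by $\gamma(t) \in E^\circ$ together with S(a), and that the Legendre substitution is legitimate without any growth/regularity check because $q \mapsto DS(x)-q$ is a bijection of $\bR^d$.
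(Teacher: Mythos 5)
Your proof is correct and reaches the same endpoint by essentially the same mechanism as the paper, namely the chain rule combined with Legendre duality and the defining relation $H^*(x,p) = H(x,DS(x)-p)$. The difference is one of presentation: the paper works in the Hamiltonian picture, introducing the dual momentum $p = \cL_v(\gamma(t),\dot{\gamma}(t))$ and decomposing $\ip{DS}{\dot\gamma} = [\ip{p}{H_p}-H] - [\ip{p-DS}{H_p}-H]$, then recognizing each bracket as a Lagrangian; this tacitly assumes the supremum defining $\cL$ is attained (justified here by strict convexity of $H$ in $p$). Your version works entirely on the Lagrangian side, proving the pointwise identity $\cL^*(x,-v) = \cL(x,v) - \ip{DS(x)}{v}$ by an affine change of variables $q = DS(x)-p$ in the supremum, which never needs the sup to be achieved and is somewhat cleaner. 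The two are computationally equivalent — your substitution is exactly the duality the paper invokes, just written without picking out a maximizer — so I would not call it a genuinely different route, only a tidier execution of the same idea.
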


\begin{proof}
Set $p = \cL_v(\gamma(t),\dot{\gamma}(t))$ and $p^* = DS(\gamma(t)) - p$. We obtain
\begin{align*}
& \frac{\dd}{\dd t} S(\gamma(t))  = DS(\gamma(t)) \dot{\gamma}(t) \\
& = p H_p(\gamma(t),p) - H(\gamma(t),p) \\
& \qquad - \left[\left(p - DS(\gamma(t)) \right) H_p(\gamma(t),p) - H(\gamma(t),p)\right] \\
& = \cL(\gamma(t),\dot{\gamma}(t)) - \left(DS(\gamma(t) - p\right) H^*_p(\gamma(t),DS(\gamma(t)) - p) \\
& \qquad + H^*(\gamma(t),DS(x(t)) - p(t)) \\
& = \cL(\gamma(t),\dot{\gamma}(t)) - p^* H^*_p(\gamma^*(T-t),p^*) + H^*(\gamma^*(T-t),p^*) \\ 
& = \cL(\gamma(t),\dot{\gamma}(t)) - \cL^*(\gamma^*(T-t),\dot{\gamma^*}(T-t)),
\end{align*}
where we have used in the last line that 
\begin{equation*}
\dot{\gamma}^*(T - t) = - \dot{\gamma}(t) = - H_p(\gamma(t),p) = H_p^*(\gamma^*(T-t),p^*).
\end{equation*}
\end{proof}

Because an entropic interpolation $\{x(t)\}_{0 \leq t \leq T}$ gives rise to a twice continuously differentiable trajectory $(x,p)$ that solves the Hamilton equations, we see that for this trajectory Lemma \ref{lemma:derivative_of_entropy} holds for all times at which the trajectory is in the interior of $E$. We use this to study the behaviour of the entropy along the interpolation.

As the entropy along an arbitrary entropic interpolation is not expected to decrease, we directly study the second derivative of the entropy $S$ along an entropic interpolation $\{x(t)\}_{t \in [0,T]}$ satisfying Assumption \ref{assumption:entropic_interpolations_not_on_boundary}. In Lemma \ref{lemma:derivative_of_entropy}, we saw that the first derivative of $S$ contains a part involving $\cL$ and a part involving $\cL^*$. We first consider the part involving $\cL$. Note that for an entropic interpolation $\frac{\dd}{\dd t} H(x(t),p(t))  = 0$ by the Hamilton equations. For $t \in (0,T)$, we have
\begin{equation} \label{eqn:second_derivative_S_H}
\begin{aligned}
\frac{\dd}{\dd t} \cL(x(t),\dot{x}(t)) & = \frac{\dd}{\dd t} \left(\ip{p(t)}{H_p(x(t),p(t))} - H(x(t),p(t))\right) \\
& = - \ip{H_x(x(t),p(t))}{H_p(x(t),p(t))} \\
& \qquad + \ip{p(t)}{H_{px}(x(t),p(t)) H_p(x(t),p(t))} \\
& \qquad - \ip{p(t)}{H_{pp}(x(t),p(t))H_x(x(t),p(t))} 
\end{aligned}
\end{equation}
Set $x^*(t) := x(T-t)$ and $p^*(t) = DS(x(T-t)) - p(T-t)$. For the derivative of the second term, we obtain similarly that
\begin{equation}\label{eqn:second_derivative_S_Hstar}
\begin{aligned}
\hspace{-2em} & \frac{\dd}{\dd t} \left(- \cL^*(x^*(T-t),\dot{x}^*(T-t))\right) \\
\hspace{-2em} & = \frac{\dd}{\dd(T-t)} \cL^*(x^*(T-t),\dot{x}^*(T-t)) \\
\hspace{-2em} & = p^*(T-t) H_{px}^*(x^*(T-t),p^*(T-t)) \\
\hspace{-2em} & \quad  - p^*(T-t) H_{pp}(x^*(T-t),p^*(T-t)) H_x^*(x^*(T-t),p^*(T-t)) \\
\hspace{-2em} & \quad - H_x^*(x^*(T-t),p^*(T-t))H_p^*(x^*(T-t),p^*(T-t)).
\end{aligned}
\end{equation}

\begin{definition}
Let $H$ and $S$ satisfy Assumption \ref{assumption:basic_assumption}. We say that $H$ and $H^*$ satisfy the  $\kappa$-\textit{entropy-convexity inequality}, denoted by ECI($\kappa$) if for all $x \in E^\circ$ and $p \in \bR^d$, we have
\begin{align*}
& \kappa \left[\ip{p}{H_p(x,p)} - H(x,p)\right] + \kappa \left[\ip{p^*}{H_p^*(x,p^*)} - H^*(x,p^*)\right] \\
& \leq  \ip{p}{H_{px}(x,p)H_p(x,p)} - \ip{p}{H_{pp}(x,p)H_x(x,p)} \\ 
& \qquad -\ip{H_x(x,p)}{H_p(x,p)} + \ip{p^*}{H_{px}^*(x,p^*)H_p^*(x,p^*)} \\
& \qquad - \ip{p^*}{H_{pp}^*(x,p^*)H_x^*(x,p^*)} - \ip{H_x^*(x,p^*)}{H_p^*(x,p^*)},
\end{align*}
where $p^* = DS(x) - p$. If $H = H^*$, we say that $H$ satisfies the $\kappa$-\textit{entropy-convexity inequality} if
\begin{multline} \label{eqn:entropy_convexity_reversible}
\kappa\left[\ip{p}{H_p(x,p)} - H(x,p)\right] \leq \ip{p}{H_{px}(x,p)H_p(x,p)} \\
- \ip{p}{H_{pp}(x,p)H_x(x,p)} - \ip{H_x(x,p)}{H_p(x,p)},
\end{multline}
for all $x \in E^\circ$ and $p \in \bR^d$.
\end{definition}

It is immediate that if $H = H^*$ \eqref{eqn:entropy_convexity_reversible} implies that $H$ and $H^*$ satisfy the  $\kappa$-\textit{entropy-convexity inequality}.

For $T > 0$ let
\begin{equation*}
G_T(s,t) = \begin{cases}
\frac{s(T-t)}{T} & \text{if } s \leq t, \\
\frac{t(T-s)}{T} & \text{if } s \geq t.
\end{cases}
\end{equation*}

A direct computation for a function $\phi \in C([0,T])$ that is twice continuously differentiable on $(0,T)$ that
\begin{align*}
\phi(t) = \frac{T-t}{T} \phi(0) + \frac{t}{T} \phi(T) - \int_0^T \ddot{\phi}(s)G_T(s,t) \dd s
\end{align*}

Combining \eqref{eqn:second_derivative_S_H} and \eqref{eqn:second_derivative_S_Hstar} with the definition of the entropy convexity inequality, we have the following result. 

\begin{theorem} \label{theorem:convexity_of_entropy_along_interpolations}
Let $H$ and $S$ satisfy Assumption \ref{assumption:basic_assumption}. Then the following are equivalent
\begin{enumerate}[(a)]
\item $H$ and $H^*$ together with $S$ satisfy the $\kappa$-entropy convexity inequality for all $x \in E^\circ$.
\item For any entropic interpolation $\{x(t)\}_{0 \leq t \leq T}$  satisfying Assumption \ref{assumption:entropic_interpolations_not_on_boundary}, we have for all $t \in [0,T]$ that
\begin{multline*}
S(x(t)) \leq \frac{T-t}{T} S(x(0)) + \frac{t}{T} S(x(T)) \\
- \kappa \int_0^T \left[\cL(x(s),\dot{x}(s)) + \cL^*(x^*(T-s),\dot{x}^*(T-s)) \right] G_T(s,t) \dd s.
\end{multline*}
\item For any entropic interpolation $\{x(t)\}_{0 \leq t \leq T}$  satisfying Assumption \ref{assumption:entropic_interpolations_not_on_boundary}, we have
\begin{multline*}
\frac{\dd}{\dd t}|_{t = 0} S(x(t)) \leq \frac{1}{T}\left[S(x(T)) - S(x(0)) \right] \\
- \kappa \int_0^T \left[\cL(x(s),\dot{x}(s)) + \cL^*(x^*(T-s),\dot{x}^*(T-s)) \right] \frac{T-s}{T} \dd s
\end{multline*}
and
\begin{multline*}
-\frac{\dd}{\dd t}|_{t = T} S(x(t)) \leq \frac{1}{T}\left[S(x(0)) - S(x(T)) \right] \\
- \kappa \int_0^T \left[\cL(x(s),\dot{x}(s)) + \cL^*(x^*(T-s),\dot{x}^*(T-s)) \right] \frac{s}{T} \dd s.
\end{multline*}
\item For any entropic interpolation $\{x(t)\}_{0 \leq t \leq T}$  satisfying Assumption \ref{assumption:entropic_interpolations_not_on_boundary}, we have
\begin{multline*}
\frac{\dd}{\dd t}|_{t = T} S(x(t)) - \frac{\dd}{\dd t}|_{t = 0} S(x(t))  \\
\geq \kappa \int_0^T \left[\cL(x(s),\dot{x}(s)) + \cL^*(x^*(T-s),\dot{x}^*(T-s)) \right] \dd s.
\end{multline*}
\end{enumerate}
In particular, if $\kappa \geq 0$, we have convexity of the entropy along entropic interpolations satisfying Assumption \ref{assumption:entropic_interpolations_not_on_boundary}.
\end{theorem}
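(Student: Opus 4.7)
The plan is to reduce all four conditions to a single pointwise inequality along an entropic interpolation, namely
$$
\ddot\phi(t) \;\geq\; \kappa\bigl[\cL(x(t),\dot{x}(t)) + \cL^*(x^*(T-t),\dot{x}^*(T-t))\bigr], \qquad \phi(t) := S(x(t)),
$$
and then to integrate this three different ways against the Green kernel $G_T$ to obtain (b), (c), (d).

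\emph{Step 1 (matching (a) to the pointwise bound).} By Lemma~\ref{lemma:entropic_interpolations_solve_Hamilton_equations}, $(x(t),p(t))$ is twice continuously differentiable on $(0,T)$ and solves the Hamilton equations for $H$; correspondingly $(x^*(s),p^*(s)):=(x(T-s),DS(x(T-s))-p(T-s))$ solves those for $H^*$ by Lemma~\ref{lemma:adjoint_flow_solves_adjoint_Hamilton_equations}. Apply Lemma~\ref{lemma:derivative_of_entropy} to obtain
$$
\dot\phi(t) = \cL(x(t),\dot{x}(t)) - \cL^*(x^*(T-t),\dot{x}^*(T-t)),
$$
and differentiate once more using the displayed identities \eqref{eqn:second_derivative_S_H} and \eqref{eqn:second_derivative_S_Hstar}; this shows $\ddot\phi(t)$ equals the right-hand side of ECI evaluated at $(x(t),p(t))$. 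On the left-hand side, the Fenchel identity $\cL(x,H_p(x,p)) = \langle p, H_p(x,p)\rangle - H(x,p)$ (and its starred counterpart, using $p^* = DS(x)-p$ and $\dot{x}^*(T-t)=-\dot{x}(t) = H_p^*(x,p^*)$) identifies the two bracketed terms of ECI with $\cL+\cL^*$ along the interpolation. Hence ECI($\kappa$) at $(x(t),p(t))$ \emph{is} the displayed pointwise inequality on $\ddot\phi$.

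\emph{Step 2 (integrations (a) $\Rightarrow$ (b), (c), (d)).} From the Green-kernel identity recalled just above the theorem and $G_T \geq 0$, substituting the ECI lower bound on $\ddot\phi$ immediately yields (b). Differentiating the identity at $t=0$ and at $t=T$, using $\partial_t G_T(s,0) = (T-s)/T$ for $s\in[0,T]$ and $\partial_t G_T(s,T) = -s/T$ for $s\in[0,T]$, and substituting again gives the two inequalities of (c). Finally, direct integration gives $\dot\phi(T)-\dot\phi(0) = \int_0^T \ddot\phi(s)\,ds \geq \kappa\int_0^T [\cL+\cL^*]\,ds$, which is (d).

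\emph{Step 3 (converses and the final convexity claim).} To close the cycle of equivalences it suffices to show (d) $\Rightarrow$ (a). Given an arbitrary $(x_0,p_0)\in E^\circ\times\bR^d$, strict convexity of $p\mapsto H(x,p)$ on the interior and standard Tonelli-minimizer theory guarantee that the Hamiltonian flow from $(x_0,p_0)$ exists, stays in $E^\circ$ on some $[0,T_0]$, and restricts on each short subinterval $[0,T]\subseteq[0,T_0]$ to an entropic interpolation between its endpoints. Apply (d) to this interpolation, divide by $T$, and let $T\downarrow 0$; continuity of $\cL+\cL^*$ and of the Hamilton flow recovers the pointwise ECI at $(x_0,p_0)$. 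The final claim that $\kappa\geq 0$ yields convexity of $S$ along interpolations is then immediate from (b) by discarding the nonnegative integral term.

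The main obstacle is the bookkeeping in Step~1: the six-term right-hand side of ECI must be matched with the sum of \eqref{eqn:second_derivative_S_H} and \eqref{eqn:second_derivative_S_Hstar}, and the Fenchel/Hamilton substitution must be executed twice---once for $H$ along $(x,p)$ and once for $H^*$ along $(x,p^*)$---with careful attention to the time reversal so that $\dot{x}^*(T-t) = -\dot{x}(t)$ correctly feeds into $\cL^*$. Once this identification is established, (b)--(d) are clean integrations against $G_T$ and its endpoint derivatives, and the converse amounts to letting $T\downarrow 0$ in (d) along Hamiltonian trajectories.
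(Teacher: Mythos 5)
Your proposal follows the paper's route: use Lemma~\ref{lemma:derivative_of_entropy} together with \eqref{eqn:second_derivative_S_H} and \eqref{eqn:second_derivative_S_Hstar} to identify $\ddot\phi(t) := \tfrac{\dd^2}{\dd t^2}S(x(t))$ with the right-hand side of the ECI at $(x(t),p(t))$, identify the left-hand side with $\kappa[\cL+\cL^*]$ via the Fenchel identity, and then integrate against $G_T$. Your Step~3 also usefully makes explicit the realizability of an arbitrary $(x_0,p_0)\in E^\circ\times\bR^d$ as the initial data of a short-time Hamiltonian arc that is an entropic interpolation (Tonelli theory), a point the paper leaves implicit in its reference to Proposition~16.2 of \cite{Vi09}.

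One logical slip, however: you prove (a)~$\Rightarrow$~(b),~(c),~(d) each directly, and then (d)~$\Rightarrow$~(a). That gives (a)~$\Leftrightarrow$~(d), but does not close the equivalence with (b) or (c). You still need the elementary chain (b)~$\Rightarrow$~(c)~$\Rightarrow$~(d): since both sides of the inequality in (b) agree at $t=0$ and at $t=T$, taking one-sided difference quotients at those endpoints yields the two inequalities in (c); and summing those two inequalities (the kernels $(T-s)/T$ and $s/T$ sum to $1$) gives (d). These are precisely the ``elementary computations'' the paper attributes to \cite{Vi09}. With this repair the cycle is closed and the argument matches the paper's.
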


\begin{proof}
That (a) implies (b) follows as noted above. The other implications follow from elementary computations, see for example the proof of Proposition 16.2 in \cite{Vi09}.
\end{proof}

\begin{remark}
In the context of a reversible system and where $H(x,p)$ is quadratic in $p$, an entropic interpolation has constant `speed'. Thus, the integrals on the right hand sides of (b)-(d) can be transformed into distances. Thus, (b) extends the $\kappa$-convexity of the entropy along displacement interpolations from the setting where $H$ is purely quadratic(no linear term):
\begin{equation*}
S(x(t)) \leq \frac{T-t}{T} S(x(0)) + \frac{t}{T} S(x(T)) \\
- \kappa \frac{t(T-t)}{2 T^2} d^2(x(0),x(T)).
\end{equation*}
In the measure valued setting, compare this to the Benamou-Brenier formula in optimal transport where the associated Hamiltonian is also purely quadratic.

In this light, note that even though this is not immediately possible in (b) and (c), we can replace the integral in the right-hand side of (d) by a time-dependent distance-like object based on the Lagrangian $\overline{\cL}$ by Proposition \ref{proposition:equivalence_entropic_interpolations}.
\end{remark}

\begin{remark}
Together with the connection made by \cite{MPR13} between large deviations and gradient flows for the entropy, the introduction of path-space large deviations into the problem of exponential entropy decay and convexity of the entropy along interpolations seems to give partial answers to the open questions 6.1 (b) and (c) in \cite{Le13}.
\end{remark}

The following lemma connects the entropy convexity inequalities with the entropy-information inequality in the case that $H = H^*$.

\begin{lemma}
Let $H$ and $S$ satisfy Assumption \ref{assumption:basic_assumption}. If $H$ satisfies the $\kappa$-entropy convexity inequality with $\kappa > 0$, then $H$ satisfies inequality \eqref{eqn:second_derivative_inequality_McKean_Vlasov} in Proposition \ref{proposition:EII_entropy_decay} and if there is only one stationary point $x_s$ for the McKean-Vlasov equation where $S(x_s) = 0$ then EII($\kappa$) is satisfied.
\end{lemma}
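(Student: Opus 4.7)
The plan is to deduce the first claim by evaluating the $\kappa$-entropy-convexity inequality at the distinguished momentum $p=0$, and then to obtain EII$(\kappa)$ by applying Proposition~\ref{proposition:EII_entropy_decay} together with an asymptotic argument using the uniqueness hypothesis.

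First I would record two preliminary facts. The Markovian origin of our setting gives $H(x,0)=0$ (equivalently, $\cL(x,H_p(x,0))=0$, expressing that the McKean-Vlasov flow is a zero-cost trajectory), and differentiating in $x$ yields $H_x(x,0)=0$. At $(x,p)=(x,0)$ one has $p^* = DS(x)$, and a chain-rule expansion of $H^*(x,p)=H(x,DS(x)-p)$ gives
\begin{equation*}
H^*(x,DS(x)) = H(x,0)=0, \qquad H_p^*(x,DS(x)) = -H_p(x,0),
\end{equation*}
\begin{equation*}
H_x^*(x,DS(x)) = D^2S(x)\,H_p(x,0), \qquad H_{pp}^*(x,DS(x))=H_{pp}(x,0),
\end{equation*}
\begin{equation*}
H_{px}^*(x,DS(x)) = -H_{px}(x,0) - H_{pp}(x,0)\,D^2S(x).
\end{equation*}

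Substituting these identities into the ECI with $(p,p^*)=(0,DS(x))$, the left-hand side reduces to $\kappa I(x)$: the $p$-term vanishes since $H(x,0)=0$, and the $p^*$-term equals $-\kappa\ip{DS(x)}{H_p(x,0)}=\kappa I(x)$. On the right-hand side, the three $p$-factored terms all vanish (two by $p=0$, the third by $H_x(x,0)=0$); among the three $p^*$-factored terms, the two copies of $\ip{DS(x)}{H_{pp}(x,0)\,D^2S(x)\,H_p(x,0)}$ arising from $\ip{p^*}{H_{px}^*H_p^*}$ and from $-\ip{p^*}{H_{pp}^*H_x^*}$ cancel with opposite signs, while $-\ip{H_x^*}{H_p^*}$ collapses to $\ip{D^2S(x)H_p(x,0)}{H_p(x,0)}$. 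What remains is exactly
\begin{equation*}
\ip{DS(x)}{H_{px}(x,0)H_p(x,0)} + \ip{D^2S(x)H_p(x,0)}{H_p(x,0)},
\end{equation*}
which is the right-hand side of \eqref{eqn:second_derivative_inequality_McKean_Vlasov}.

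For the second conclusion, Proposition~\ref{proposition:EII_entropy_decay} applied to \eqref{eqn:second_derivative_inequality_McKean_Vlasov} gives, along every McKean-Vlasov trajectory, $S(x(t))-S_\infty \leq e^{-\kappa t}(S(x(0))-S_\infty)$ and $I(x(t))\leq e^{-\kappa t}I(x(0))\to 0$. Because $S$ is monotone decreasing along the flow and bounded below by $0$, the trajectory stays in the sublevel set $\{S\leq S(x(0))\}$; a LaSalle-type argument then identifies every accumulation point of $x(t)$ with the unique stationary point $x_s$, so $S_\infty = S(x_s)=0$ and the decay specializes to $S(x(t))\leq e^{-\kappa t}S(x(0))$, equivalent to EII$(\kappa)$ by Lemma~\ref{lemma:EII_equiv_with_entropy_decay}. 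The main delicacy I anticipate is the sign-bookkeeping of the $D^2S$-cancellations in the $p^*$ part of the ECI; the identification $S_\infty=0$ is a routine dynamical-systems argument given the uniqueness hypothesis and compactness of level sets of $S$.
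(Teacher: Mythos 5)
Your proof is correct and arrives at the same inequality, but by a genuinely different route. The paper's own proof does not substitute $p=0$ into the ECI directly; instead it applies Theorem~\ref{theorem:convexity_of_entropy_along_interpolations}(d) to the McKean--Vlasov trajectory (which is the entropic interpolation with $p \equiv 0$), using $\cL(x(s),\dot x(s))=0$ and the identity $\cL^*(x^*(T-s),\dot x^*(T-s)) = -\tfrac{\dd}{\dd s}S(x(s))$ from Lemma~\ref{lemma:derivative_of_entropy}, and then differentiates the resulting integral bound. Your approach short-circuits the interpolation-theoretic detour: you plug $(p,p^*)=(0,DS(x))$ into the pointwise ECI and carry out the $H^*$-derivative bookkeeping by hand, making the cancellation of the two $\ip{DS}{H_{pp}D^2S\,H_p}$ terms explicit. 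What the paper's route buys is economy (it reuses an already-established theorem, so no new algebra is needed); what your route buys is transparency (you can see precisely which pieces of the ECI survive at $p=0$ without invoking the second-derivative computations \eqref{eqn:second_derivative_S_H}--\eqref{eqn:second_derivative_S_Hstar} that feed into Theorem~\ref{theorem:convexity_of_entropy_along_interpolations}). Both arguments rely on the same implicit fact $H(x,0)=0$ (equivalently $\cL(x,H_p(x,0))=0$), which is not literally listed in Assumption~\ref{assumption:basic_assumption} but is forced by the non-negative Lagrangian setup; the paper uses it silently through $\cL(x(s),\dot x(s))=0$, so you are on equal footing there. Your handling of the second conclusion via Proposition~\ref{proposition:EII_entropy_decay}, identification of $S_\infty = S(x_s)=0$ under the uniqueness hypothesis, and conversion to EII($\kappa$) through Lemma~\ref{lemma:EII_equiv_with_entropy_decay}, matches what the paper leaves implicit.
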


\begin{proof}
Suppose $\{x(t)\}_{t \geq 0}$ is a solution to the McKean-Vlasov equation. Then $\cL(x(s),\dot{x}(s)) = 0$ and by Lemma \ref{lemma:derivative_of_entropy} $\cL^*(x^*(T-s),\dot{x}^*(T-s)) = - \frac{\dd}{\dd s} S(x(s))$.

Thus, by (c) of Theorem \ref{theorem:convexity_of_entropy_along_interpolations}, we find for all $T > 0$ that
\begin{equation*}
\frac{\dd}{\dd t}|_{t = T} S(x(T)) - \frac{\dd}{\dd t}|_{t = 0} S(x(0))  \\
\geq - \kappa \int_0^T \frac{\dd}{\dd s} S(x(s)) \dd s.
\end{equation*}
This implies that
\begin{equation*}
\frac{\dd^2}{\dd t^2} S(x(t)) \geq - \kappa \frac{\dd}{\dd t} S(x(t)),
\end{equation*}
which is a reformulation of \eqref{eqn:second_derivative_inequality_McKean_Vlasov}.
\end{proof}

\section{Tensorization} \label{section:tensorization}

In this section, we will consider two variants of tensorization and show that the inequalities introduced above behave well under tensorization.
One variant corresponds intuitively to low-noise systems, and one to the large deviations of empirical densities. For both, we formally motivate the construction by going back to \eqref{eqn:LDP}.

\smallskip

Suppose we have time-homogeneous Markov processes $X^i_n(t)$ on $F_i$ for $i \in \{1,\dots, k\}$ that have generators $A_{i,n}$. The approach to prove path-space large deviations by Feng and Kurtz, \cite{FK06}, also applied in \cite{DFL11} and \cite{Kr16b,CoKr16}, shows that the large deviation principle as $n \rightarrow \infty$ for the sequences $X_{i,n} \in D_{F_i}(\bR^+)$ can formally be obtained in the following way.
\begin{enumerate}[(a)]
\item Define the operators $H_{i,n}f := \frac{1}{n} e^{-nf} A_{n,i} e^{nf}$.
\item  Show that for a sufficiently large class of functions there exists a limiting operator $H_{n} f$ such that $\lim_n \vn{H_{i,n}f - H_i f} = 0$.
\item Show that $H_i$ is of the form $H_i f(x) = H_i(x,\nabla f(x))$.
\item Define the Lagrangian $\cL_i(x,v) = \sup_p \ip{p}{v} - H_i(x,p)$.
\end{enumerate}
If the sequence $X_{i,n}(0)$ satisfies the large deviation principle with good rate function $I_{i,0}$, then the rate function of $X_{i,n}$ formally equals
\begin{equation*}
\PR\left[X_{n,i} \approx \gamma\right] \approx e^{-nI(\gamma)},
\end{equation*}
where $I(\gamma) = I_0(\gamma(0)) + \int_0^\infty \cL(\gamma(t),\dot{\gamma}(t)) \dd t$ for absolutely continuous $\gamma$ and $I(\gamma) = \infty$ otherwise.

\smallskip

The process $X_n(t) := (X^1_n(t), \dots, X^k_n(t))$ on $F := \prod_{i=1}^k F_i$ is also Markovian. Denote by $\cD(A_n)$ the linear span of functions of the type $f(x) = f(x_1,\dots, x_k) = \prod_{i=1}^k f_i(x_i)$, where $f_i \in \cD(A_{i,n})$. Then, the generator $A_n$ of the process $X_n(t)$ is given for $f \in \cD(A_n)$, by
\begin{equation*}
A_nf(x) = \sum_{i=1}^k \left(\prod_{j \neq i} f_j(x_j) \right) A_if_i(x_i).
\end{equation*}
For the domain of $H_n$, we consider functions of the form $f$ such that $e^{nf} \in \cD(A_n)$. In other words, $f$ of the form
\begin{equation*}
f(x) = \sum_{i=1}^k f_i(x_i)
\end{equation*}
for $f_i \in \cD(A_i)$. It follows that for $f$ of this type
\begin{equation*}
H_{n}f(x) = \sum_{i=1}^k \frac{1}{n} e^{-nf_i(x_i)} (A_{n,i} e^{nf_i})(x_i) = \sum_{i=1}^k H_{n,i}f_i(x_i).
\end{equation*}
We conclude that for $f$ of the form $f(x) = \sum_{i=1}^k f_i(x_i)$ the formal limit $Hf = \lim_n H_nf$ is of the form $Hf(x) = \sum_{i=1}^k H_i f_i(x_i)$. Writing this in terms of the gradient of $f$: $Hf(x) = H(x,\nabla f(x))$, we find that $H(x,p) = \sum_{i=1}^k H_i(x_i,p_i)$.

This formal computation leads to the following tensorization procedure.

\subsection{Tensorization for product systems}

For $i \in \{1,\dots,k\}$ let $E_i$ be a closed subset of $\bR^{d_k}$. Additionally, suppose that $\PR_{i,n}$ are measures on $D_{E_i}(\bR^+)$. Set $d := \sum_i d_i$ and denote $E = \prod_{i=1}^k E_i \subseteq \bR^d$. The product measures $\PR_n = \otimes_{i=1}^k \PR_{i,n}$ are defined on $\prod_{i=1}^k D_{E_i}(\bR^+)$ or equivalently on $D_E(\bR^+)$.

As we are taking a product system, it follows that if the trajectories under the measures $\PR_{i,n}$ satisfy the large deviation principle on $D_{E_i}(\bR^+)$ with
\begin{enumerate}[(a)]
\item Lagrangians $\cL_i$,
\item Hamiltonians $H_i$ that satisfy Assumptions \ref{assumption:basic_assumption} H(a) and H(b),
\item functions $S_i$ that satisfy \ref{assumption:basic_assumption} S(a) and S(b),
\end{enumerate}  
then the trajectories under $\PR_n$ satisfy a large deviation principle with Lagrangian $\cL(x,v) = \sum_{i=1}^k \cL_i(x_i,v_i)$, Hamiltonian $H(x,p) = \sum_{i=1}^k H_i(x_i,p_i)$ and entropy $S(x) = \sum_{i=1}^k S_i(x_i)$ that satisfy Assumption \ref{assumption:basic_assumption}, where $x = (x_1, \dots, x_k)$,$v = (v_1, \dots, v_k)$ and $p = (p_1, \dots, p_k)$.

The following proposition is straightforward, and follows from the principle that large deviation principles turn products into sums.

\begin{proposition} \label{proposition:tensorization_for_product_systems}
Suppose that for every $i \in \{1,\dots,k\}$ that $H_i, S_i$ satisfy Assumption \ref{assumption:basic_assumption}. Consider the product system with Hamiltonian $H$ and entropy $S$. Then we have the following implications.

\begin{enumerate}[(a)]
\item Suppose that for each $i$ $H_i$ and $S_i$ satisfy EII($\kappa_i$). Then $H$ and $S$ satisfy an entropy-information inequality with constant $\kappa := \min_{i} \kappa_i$.
\item Suppose that for each $i$ $H_i$ and $S_i$ satisfy \eqref{eqn:second_derivative_inequality_McKean_Vlasov} with constant $\kappa_i$. Then $H$ and $S$ satisfy \eqref{eqn:second_derivative_inequality_McKean_Vlasov}  with constant $\kappa := \min_{i} \kappa_i$.
\item Suppose that for each $i$ $H_i$ and $S_i$ satisfy ECI($\kappa_i$). Then $H$ and $S$ satisfy an entropy-convexity inequality with constant $\kappa := \min_{i} \kappa_i$.
\end{enumerate}
\end{proposition}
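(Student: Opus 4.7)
The plan is to exploit the block-diagonal and additive structure forced on every object appearing in (EII), \eqref{eqn:second_derivative_inequality_McKean_Vlasov}, and (ECI) by the product Hamiltonian $H(x,p) = \sum_{i=1}^k H_i(x_i,p_i)$ and the additive entropy $S(x) = \sum_{i=1}^k S_i(x_i)$, and then to close each of the three parts via the elementary estimate $(\min_i \kappa_i) \sum_i a_i \leq \sum_i \kappa_i a_i$ valid whenever $a_i \geq 0$.

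First I would record the structural identities. Because $H$ has no cross-coupling between different factors, the mixed partials $\partial^2 H/\partial p_i \partial x_j$ and $\partial^2 H/\partial p_i \partial p_j$ vanish for $i \neq j$, so $H_{px}(x,p)$ and $H_{pp}(x,p)$ are block-diagonal with blocks $H_{i,px}(x_i,p_i)$ and $H_{i,pp}(x_i,p_i)$. Likewise $DS(x) = (DS_1(x_1),\dots,DS_k(x_k))$, $D^2S(x)$ is block-diagonal with blocks $D^2S_i(x_i)$, and $H_p(x,0)$ has $i$-th block $H_{i,p}(x_i,0)$. Moreover $H^*(x,p) = H(x,DS(x)-p) = \sum_i H_i^*(x_i,p_i)$, so the same block structure transfers to $H^*$ and all its derivatives.

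For (a), these identities give $I(x) = -\ip{DS(x)}{H_p(x,0)} = \sum_i I_i(x_i)$, and each $I_i \geq 0$ by Proposition \ref{proposition:Htheorem2}. Since each $S_i \geq 0$, setting $\kappa = \min_i \kappa_i$ and applying EII($\kappa_i$) factor-wise yields $\kappa S(x) \leq \sum_i \kappa_i S_i(x_i) \leq \sum_i I_i(x_i) = I(x)$. Part (b) runs identically with $a_i = I_i(x_i) \geq 0$: block-diagonality makes both quadratic forms on the right-hand side of \eqref{eqn:second_derivative_inequality_McKean_Vlasov} decompose as sums over $i$ of the corresponding single-factor expressions, and min-averaging closes the argument.

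Part (c) is structurally the same but involves more terms. Writing $p = (p_1,\dots,p_k)$ and $p^* = DS(x) - p = (p_1^*,\dots,p_k^*)$ with $p_i^* = DS_i(x_i) - p_i$, every bilinear pairing in (ECI) splits across factors because each is a pairing of a block-diagonal matrix against vectors whose components live in the individual factors. The coefficient of $\kappa$ in each summand equals $\cL_i(x_i, H_{i,p}(x_i,p_i)) + \cL_i^*(x_i, H_{i,p}^*(x_i,p_i^*)) \geq 0$ by the Legendre identity together with $\cL_i, \cL_i^* \geq 0$, so min-averaging applies factor-wise and produces (ECI) with constant $\min_i \kappa_i$. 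The only real work is the bookkeeping in (c), verifying that each of the six terms on the right-hand side splits correctly across factors; the only substantive input beyond block-diagonality is the convexity in the momentum variable guaranteed by Assumption \ref{assumption:basic_assumption} H(b), which underpins the non-negativity of the $\kappa$-coefficient needed for the averaging step.
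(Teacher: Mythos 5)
Your proof is correct and takes exactly the route the paper intends: the paper states this proposition with no proof, calling it ``straightforward'' since products turn into sums at the level of Hamiltonian, entropy, and all their derivatives, and you have simply carried out the bookkeeping (block-diagonality of $H_{px}$, $H_{pp}$, $D^2S$; additive decomposition of $I$ and of every quadratic-form term in EII, \eqref{eqn:second_derivative_inequality_McKean_Vlasov}, and ECI; non-negativity of $S_i$, $I_i$, $\cL_i$, $\cL_i^*$ to justify replacing each $\kappa_i$ by $\min_i\kappa_i$). No differences in strategy worth noting.
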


Additionally, we have the following technical result, which is useful for the application of Theorem \ref{theorem:convexity_of_entropy_along_interpolations}.

\begin{proposition} \label{proposition:tensorisation_stay_in_interior_first_variant}
Suppose that for every $i \in \{1,\dots,k\}$ that $H_i, S_i$ satisfy Assumption \ref{assumption:basic_assumption}. Suppose that for every $i$, we have that every entropic interpolation $\gamma : [0,T] \rightarrow E_i$ satisfies $\gamma(t) \in E_i^\circ$ for $t \in (0,T)$. Then it holds for every entropic interpolation $\rho : [0,T] \rightarrow E$ of the product system that $\rho(t) \in E^\circ$ for all $t \in (0,T)$.
\end{proposition}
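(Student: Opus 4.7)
The plan is to exploit the additive decomposition of the product Lagrangian and reduce the statement to the componentwise hypothesis.

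First I would write down the product structure explicitly: by the construction preceding the proposition, the Lagrangian of the product system satisfies $\cL(x,v) = \sum_{i=1}^{k} \cL_i(x_i,v_i)$, where $x = (x_1,\dots,x_k)$ and $v = (v_1,\dots,v_k)$. A curve $\rho : [0,T] \to E$ is absolutely continuous if and only if each projected curve $\rho_i : [0,T] \to E_i$ is absolutely continuous, with $\dot{\rho} = (\dot{\rho}_1,\dots,\dot{\rho}_k)$ a.e. Consequently the action splits as
\begin{equation*}
\int_0^T \cL(\rho(s),\dot{\rho}(s)) \dd s = \sum_{i=1}^{k} \int_0^T \cL_i(\rho_i(s),\dot{\rho}_i(s)) \dd s.
\end{equation*}

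The next step is to decouple the minimization. Fix $x,y \in E$ and consider the infimum defining an entropic interpolation from $x$ to $y$ in time $T$. Since the boundary conditions $\rho(0)=x$, $\rho(T)=y$ translate into independent boundary conditions $\rho_i(0)=x_i$, $\rho_i(T)=y_i$ for each component, and each summand in the decomposed action depends only on its own coordinate, the infimum over $\rho$ equals the sum of the componentwise infima
\begin{equation*}
\inf_{\substack{\rho \in \cA\cC \\ \rho(0)=x,\, \rho(T)=y}} \int_0^T \cL(\rho,\dot{\rho}) \dd s = \sum_{i=1}^{k} \inf_{\substack{\gamma_i \in \cA\cC \\ \gamma_i(0)=x_i,\, \gamma_i(T)=y_i}} \int_0^T \cL_i(\gamma_i,\dot{\gamma}_i) \dd s.
\end{equation*}
From this it follows that $\rho$ is an entropic interpolation in the product system if and only if each projected curve $\rho_i$ is an entropic interpolation in $E_i$ between $x_i$ and $y_i$ in time $T$.

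Finally I would invoke the hypothesis: for each $i$, every entropic interpolation from $x_i$ to $y_i$ lies in $E_i^\circ$ for $t \in (0,T)$. Since $E_i \subseteq \bR^{d_i}$ is closed and $E = \prod_{i=1}^{k} E_i \subseteq \bR^{d}$, we have $E^\circ = \prod_{i=1}^k E_i^\circ$. Hence $\rho(t) = (\rho_1(t),\dots,\rho_k(t)) \in E^\circ$ for all $t \in (0,T)$, which is the claim. There is no real obstacle here; the only point requiring a moment of care is the decoupling of the joint variational problem, which is immediate from the additivity of $\cL$ and the independence of the componentwise boundary conditions.
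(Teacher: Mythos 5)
Your proof is correct, and since the paper states Proposition \ref{proposition:tensorisation_stay_in_interior_first_variant} without supplying a proof, you have filled in exactly the argument that was left implicit. The key observations are all present: the additive decomposition $\cL(x,v) = \sum_i \cL_i(x_i,v_i)$, the independence of the boundary conditions across factors, the resulting decoupling of the variational problem (so that $\rho$ is an entropic interpolation in $E$ if and only if each projection $\rho_i$ is one in $E_i$), and the topological fact $E^\circ = \prod_i E_i^\circ$. One point you glide over but which deserves a sentence is the claim that $\rho \in \cA\cC$ if and only if each $\rho_i \in \cA\cC$ with respect to the paper's (function-tested) definition of absolute continuity: the forward direction is immediate by testing against functions depending only on the $i$-th block, while the reverse direction uses the chain rule for $C^1$ functions composed with classically absolutely continuous curves, together with the equivalence of the paper's definition to the coordinatewise classical one. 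With that remark the argument is complete and self-contained.
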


\subsection{Tensorization for the evolution of empirical densities on product spaces} \label{section:tensorization_for_empirical_density_on_product_spaces}

Next, we consider Markov jump processes on a finite state-space. We will show that if we consider the dynamics of empirical averages that take their values in spaces of the type $E = \cP(\{1,\dots,q\})$ the inequalities introduced above also behave well under tensorization. In this setting, we can not immediately follow the formal argument as at the start of Section \ref{section:tensorization}, because the large deviation principle does not hold for the processes themselves, but for a lower-dimensional projection of the process.

To clarify what we mean by tensorization in this context, we go back to the underlying processes of equation \eqref{eqn:LDP}. Let $F_1, \dots, F_k$ be finite sets of sizes $d_i$ and set $F = \prod_{i=1}^k F_i$. Additionally, denote $E_i := \cP(F_i) \subseteq \bR^{d_i}$ and $E := \cP(F) \subseteq \bR^d$, where $d = \sum_{i=1}^k d_i$. 

Fix some $i$ and suppose that on $F_i$ we have $n$ interacting jump-processes $(X_1^i(t),\dots,X_n^i(t))$ such that the trajectories $t \mapsto \mu_n^i(t)$ of the empirical density
\begin{equation*}
\mu_n^i(t) := \frac{1}{n} \sum_{j=1}^n \delta_{X_j^n(t)} \in \cP(F_i) = E_i
\end{equation*}
satisfies the large deviation principle as in \eqref{eqn:LDP} with Lagrangian $\cL_i$, Hamiltonian $H_i$ and entropy $S_i$ satisfying Assumption \ref{assumption:basic_assumption}.

\smallskip

Now we consider product dynamics on the level of the jump processes on $F_i$. Thus, for each $n$, we have $n$ interacting jump processes $(Y_1(t),\dots,Y_n(t))$ on $F$, where $Y_j(t) := (X_j^1(t),\dots,X_j^k(t))$.

As we consider product dynamics, it follows that the trajectories $t \mapsto \mu_n(t)$, defined by
\begin{equation*}
\mu_n(t) := \frac{1}{n} \sum_{j=1}^n \delta_{Y_j} \in \cP(F) = E
\end{equation*}
also satisfies the large deviation principle on $D_E(\bR^+)$. To express the Lagrangian, Hamiltonian and entropy of this product system in terms of the ones corresponding to the separate systems, we introduce some notation.

\smallskip

Let $\pi_i : F \rightarrow F_i$ denote the projection map $\pi_i(a_1, \dots, a_k) = a_i$. $\pi$ induces the following maps:
\begin{enumerate}[(a)]
\item $\pi_i : E = \cP(F) \rightarrow E_i = \cP(F_i)$, by $\pi_i \mu = \mu \circ \pi_i^{-1}$.
\item $\pi_i : \bR^{|F|} \rightarrow \bR^{|F_i|}$ by $\pi_i(p) := \left\{(\pi_i(p))_j\right\}_{j \in F_i}$ where
\begin{equation*}
(\pi_i(p))_j := \sum_{\substack{q \in F \\ \pi_i(q) = j}} p_q.
\end{equation*}
\end{enumerate}

Arguing as at the start of Section  \ref{section:tensorization}, it follows that the Lagrangian, Hamiltonian and entropy for the large deviation principle for the product system are given by
\begin{enumerate}[(a)]
\item $\cL(x,v) = \sum_{i=1}^k \cL_i(\pi_i(x),\pi_i(v))$,
\item $H(x,p) = \sum_{i=1}^k H_i(\pi_i(x),\pi_i(p))$,
\item $S(x) =  \sum_{i=1}^k S_i(\pi_i(x))$.
\end{enumerate}
As before, if the components satisfy Assumption \ref{assumption:basic_assumption}, then this Assumption is also satisfied for the product system. We have the following two analogous results.

\begin{proposition} \label{proposition:tensorization_for_empirical_measure_product}
Suppose that for every $i \in \{1,\dots,k\}$ that $H_i, S_i$ satisfy Assumption \ref{assumption:basic_assumption}. Consider the product system with Hamiltonian $H$ and entropy $S$. Then we have the following implications.

\begin{enumerate}[(a)]
\item Suppose that for each $i$ $H_i$ and $S_i$ satisfy an EII($\kappa_i$). Then $H$ and $S$ satisfy an entropy-information inequality with constant $\kappa := \min_{i} \kappa_i$.
\item Suppose that for each $i$ $H_i$ and $S_i$ satisfy \eqref{eqn:second_derivative_inequality_McKean_Vlasov} with constant $\kappa_i$. Then $H$ and $S$ satisfy \eqref{eqn:second_derivative_inequality_McKean_Vlasov}  with constant $\kappa := \min_{i} \kappa_i$.
\item Suppose that for each $i$ $H_i$ and $S_i$ satisfy EII($\kappa_i$). Then $H$ and $S$ satisfy an entropy-convexity inequality with constant $\kappa := \min_{i} \kappa_i$.
\end{enumerate}
\end{proposition}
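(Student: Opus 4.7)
The strategy mirrors the proof of Proposition \ref{proposition:tensorization_for_product_systems}: apply the chain rule and verify that the sum-over-$i$ structure is preserved. The new ingredient is that $S$ and $H$ depend on $x$ and $p$ only through the projections $\pi_i x,\pi_i p$, so every derivative picks up a factor of the adjoint linear map $\pi_i^{*}:\bR^{|F_i|}\to\bR^{|F|}$. Concretely,
\begin{equation*}
DS(x)=\sum_{i}\pi_i^{*}DS_i(\pi_i x),\qquad H_p(x,p)=\sum_{i}\pi_i^{*}(H_i)_p(\pi_i x,\pi_i p),
\end{equation*}
with analogous formulas for $H_x$, $H_{px}$, $H_{pp}$ and $D^2S$.

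The decisive observation is the gauge invariance of each $H_i(y,q)$ in its momentum variable: $H_i$ depends on $q$ only through differences $q_b-q_a$, so the gradient $(H_i)_q$ has zero component-sum, and this property propagates to all further derivatives in $y$ and $q$. Combined with the elementary matrix identities $\pi_i\pi_i^{*}=(|F|/d_i)\,\mathrm{id}$ and $\pi_i\pi_j^{*}=(\prod_{\ell\neq i,j}d_\ell)\,\mathbf{1}\mathbf{1}^T$ for $i\neq j$, this forces every bilinear pairing $\langle\pi_i^{*}u_i,\pi_j^{*}v_j\rangle$ in which $v_j$ has zero component-sum to reduce to its diagonal ($i=j$) contribution, which equals $(|F|/d_i)\langle u_i,v_i\rangle$.

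Feeding this into $I(x)=-\langle DS(x),H_p(x,0)\rangle$ and into the right-hand sides of \eqref{eqn:second_derivative_inequality_McKean_Vlasov} and of the entropy-convexity inequality gives clean per-index sums of the form $\sum_i (|F|/d_i)^{\alpha_i}\cdot(\text{component quantity}_i)$, where $\alpha_i\in\{1,2\}$ counts the number of momentum derivatives occurring in that term. Since $|F|/d_i\geq 1$ and the informations $I_i$ are non-negative, each $\alpha_i$ on the right matches or exceeds the exponent appearing on the left, so invoking the component inequalities EII($\kappa_i$), respectively \eqref{eqn:second_derivative_inequality_McKean_Vlasov} with constant $\kappa_i$, respectively ECI($\kappa_i$), and using $S(x)=\sum_i S_i(\pi_i x)$, produces the product inequalities with $\kappa=\min_i\kappa_i$.

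The main obstacle lies in (c), since the adjoint $H^{*}(x,p)=H(x,DS(x)-p)$ does not itself decompose as $\sum_i H_i^{*}(\pi_i x,\pi_i p)$: one only has $\pi_i DS(x)=(|F|/d_i)\,DS_i(\pi_i x)$ modulo an additive multiple of $\mathbf{1}$. Gauge invariance of $H_i$ permits discarding that additive constant inside $H_i(\pi_i x,\cdot)$, and the surviving scaling factor $|F|/d_i$ then enters symmetrically on both sides of the ECI, so the argument still reduces to the same per-index scheme as in (a) and (b).
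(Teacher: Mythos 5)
Your reconstruction correctly identifies the two structural ingredients that make the argument work: write all derivatives in terms of the projections $\pi_i$, and use the gauge invariance of each $H_i$ in its momentum variable (zero component-sum of $(H_i)_p$ and zero row/column sums of $(H_i)_{pp}$, $(H_i)_{px}$) to make the cross-term pairings $\langle\pi_i^{*}u_i,\pi_j^{*}v_j\rangle$, $i\neq j$, vanish. The paper itself gives no proof and treats the statement as ``analogous'' to Proposition~\ref{proposition:tensorization_for_product_systems}, so you are filling a genuine gap. However, by taking the paper's literal (unnormalized) marginalization $(\pi_i p)_j=\sum_{q:\pi_i(q)=j}p_q$ for momenta, you pick up spurious scaling factors $|F|/d_i$, and these cause real trouble.

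First, a warning sign you should have noticed: with this convention $\pi_i DS(x)=(|F|/d_i)DS_i(\pi_i x)+c\mathbf{1}$, so $H(x,DS(x))=\sum_i H_i(\pi_i x,(|F|/d_i)DS_i(\pi_i x))\neq 0$ in general. The product $(H,S)$ then fails condition S(b) of Assumption~\ref{assumption:basic_assumption}, so the inequalities are not even well-posed. The momentum projection must be the normalized conditional average $\tfrac{d_i}{|F|}\pi_i$; with that choice $H(x,DS(x))=0$, the adjoint decomposes as $H^*(x,p)=\sum_i H_i^*(\pi_i x,\cdot)$, and \emph{no} scaling factors appear — the whole proof collapses to the same sums-of-nonnegative-terms argument as Proposition~\ref{proposition:tensorization_for_product_systems}.

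Second, and this is the substantive gap, part (c) does not follow from the argument you give. The claim that the factor $|F|/d_i$ ``enters symmetrically on both sides of the ECI'' is false. Already on the LHS, the pairing $\langle p,H_p\rangle$ carries no factor (since $\langle p,\pi_i^*v_i\rangle=\langle\pi_i p,v_i\rangle$ and $\pi_i p$ is precisely the argument of $(H_i)_p$), while the pairing $\langle DS(x),H_p\rangle$ carries a factor $|F|/d_i$ — so the LHS of the ECI is already a mixture $\kappa\bigl[2\sum_i\cL_i+\sum_i(|F|/d_i)\cI_i\bigr]$ that does not match $\sum_i\kappa_i\bigl[2\cL_i+\cI_i\bigr]$. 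Worse, the factor appears \emph{inside} the adjoint derivatives: one computes $H^*_{px}(x,p^*)=-\sum_j\pi_j^*\bigl[(H_j)_{px}+(|F|/d_j)(H_j)_{pp}D^2S_j\bigr]\pi_j$, which is not the lift of the component $(H_j^*)_{px}=-(H_j)_{px}-(H_j)_{pp}D^2S_j$. So the $H^*$-half of the product ECI is not a $(|F|/d_j)$-weighted sum of the component ECI right-hand sides, and part (c) remains unproven on your route. (Your heuristic that the exponent on $|F|/d_i$ ``counts momentum derivatives'' also needs care: what it really counts is the number of $\pi_i\pi_i^*$ pairings, which happens to agree in the terms you examined but is not a rule you can apply blindly.) Using the normalized projection from the start removes all of these problems.
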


Additionally, we have the following variant of Proposition \ref{proposition:tensorisation_stay_in_interior_first_variant}.

\begin{proposition} \label{proposition:tensorization_for_empirical_measure_product_trajectories_in_interior}
Suppose that for every $i \in \{1,\dots,k\}$ that $H_i, S_i$ satisfy Assumption \ref{assumption:basic_assumption}. Suppose that for every $i$, we have that every entropic interpolation $\gamma : [0,T] \rightarrow E_i$ satisfies $\gamma(t) \in E_i^\circ$ for $t \in (0,T)$. Then it holds for every entropic interpolation $\rho : [0,T] \rightarrow E$ of the product system that $\rho(t) \in E^\circ$ for all $t \in (0,T)$.
\end{proposition}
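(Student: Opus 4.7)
The strategy is to reduce to the per-coordinate statements via the marginal maps $\pi_i$, mirroring the proof of Proposition \ref{proposition:tensorisation_stay_in_interior_first_variant}. Let $\rho:[0,T]\to E$ be an entropic interpolation from $x_0$ to $x_T$ for the product system, and set $\gamma_i(t):=\pi_i(\rho(t))$. Since $\pi_i$ is linear, $\dot{\gamma}_i = \pi_i(\dot{\rho})$ a.e., so the decomposition $\cL(x,v) = \sum_i \cL_i(\pi_i(x),\pi_i(v))$ yields the basic identity
\begin{equation*}
\int_0^T \cL(\rho(s),\dot{\rho}(s))\,\dd s \;=\; \sum_{i=1}^k \int_0^T \cL_i(\gamma_i(s),\dot{\gamma}_i(s))\,\dd s.
\end{equation*}

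The next step is to show that each $\gamma_i$ is itself an entropic interpolation in $E_i$ between $\pi_i(x_0)$ and $\pi_i(x_T)$. The inequality $\inf_\rho \geq \sum_i \inf_{\gamma_i}$ between the two minimization problems is immediate from the identity above. For the reverse inequality, I would take an optimal tuple $(\gamma_i^*)_{i}$ matching the correct endpoints $\pi_i(x_0)$ and $\pi_i(x_T)$, and build an absolutely continuous lift $\rho^*:[0,T]\to E$ with $\pi_i\rho^* = \gamma_i^*$, $\rho^*(0) = x_0$, $\rho^*(T) = x_T$, by selecting any coupling on $F$ with the prescribed marginals $\gamma_i^*(t)$ on a sub-interval $[\epsilon,T-\epsilon]$ (for instance the independent coupling $\bigotimes_i \gamma_i^*(t)$) and gluing to the endpoints by short transitions whose cost vanishes as $\epsilon\downarrow 0$. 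The displayed identity then forces equality of the two infima, from which it follows that for any entropic interpolation $\rho$, each projection $\pi_i\rho$ must individually minimize its marginal problem.

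By the assumed interior property of entropic interpolations in each $E_i$, we obtain $\gamma_i(t) = \pi_i(\rho(t)) \in E_i^\circ$ for every $i$ and every $t\in(0,T)$. Since the product Hamiltonian $H(x,p) = \sum_i H_i(\pi_i(x),\pi_i(p))$ and entropy $S(x) = \sum_i S_i(\pi_i(x))$ of the product system are defined and twice continuously differentiable precisely on the set of $x\in\cP(F)$ whose marginals all lie in the corresponding $E_i^\circ$, this set is the interior $E^\circ$ relevant for Assumption \ref{assumption:basic_assumption}, and we conclude $\rho(t)\in E^\circ$ for $t\in(0,T)$.

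The main obstacle is the lift construction in the reverse-inequality step. The polytope of couplings with prescribed marginals $\gamma_i^*(t)$ is non-empty at every time but varies continuously in $t$, and the endpoints $x_0, x_T$ are not assumed to be product measures, so one cannot simply take the independent coupling throughout. Controlling the cost of the gluing near $t=0$ and $t=T$ (using local Lipschitz or coercivity bounds on $\cL$) is the one delicate item; once the lift exists, the remainder is mechanical and parallels the argument for Proposition \ref{proposition:tensorisation_stay_in_interior_first_variant}.
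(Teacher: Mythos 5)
Your overall strategy (project to marginals, show each $\pi_i\rho$ is itself an entropic interpolation in $E_i$, then invoke the component-wise hypothesis) is the right one, and the first displayed identity is correct. However there are two genuine gaps.

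First, your final identification of the ``marginal-interior'' set $\{x\in\cP(F): \pi_i x\in E_i^\circ \ \forall i\}$ with $E^\circ$ is false when $E=\cP(F)$ and $k\geq 2$: for example, on $F=\{-1,1\}^2$ the measure $\tfrac12\delta_{(1,1)}+\tfrac12\delta_{(-1,-1)}$ has uniform (interior) marginals but assigns mass zero to $(1,-1)$, so it lies on the relative boundary of $\cP(F)$. The marginal-interior set is a strict superset of $E^\circ$ and your argument, even if completed, only places $\rho(t)$ in the former. (Indeed the paper itself seems to identify the effective state space with $\prod_i\cP(F_i)$ --- note Section 5.2 writes $E\subseteq\bR^{\sum d_i}$, which is the dimension of $\prod_i\cP(F_i)$, not of $\cP(\prod_i F_i)$ --- and under that reading $E^\circ=\prod_i E_i^\circ$ and the conclusion would follow from marginal-interiority. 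But you cannot simply assert the two sets coincide.)

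Second, the lift you propose does not have finite Lagrangian cost, so it cannot be used in the reverse-inequality step. Since $H(x,p)=\sum_i H_i(\pi_i x,\pi_i p)$ depends on $p$ only through $\Pi p:=(\pi_1 p,\dots,\pi_k p)$, taking $p=p_0+\lambda q$ with $q\in\ker\Pi$ shows that $\cL(x,v)=\sup_p\{\ip{p}{v}-H(x,p)\}=+\infty$ unless $v\perp\ker\Pi$. Hence any finite-cost trajectory must have $\dot\rho(s)\in(\ker\Pi)^\perp$ a.e.\ and therefore stays on the affine leaf $x_0+(\ker\Pi)^\perp$. Your gluing near the endpoints necessarily moves in $\ker\Pi$-directions (since $x_0$ need not be a product measure), so its cost is $+\infty$, not $o(1)$ as $\epsilon\downarrow 0$. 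The correct observation here is stronger than you need: on the leaf $x_0+(\ker\Pi)^\perp$ the map $\Pi$ is injective (because $\ker\Pi\cap(\ker\Pi)^\perp=\{0\}$), so the lift of a marginal tuple $(\gamma_i')_i$ through $x_0$ is \emph{unique}; there is no coupling to select and no gluing to do, and once $\pi_i\tilde\rho(T)=\pi_i x_T$ for all $i$ and $\tilde\rho(T)\in x_0+(\ker\Pi)^\perp\ni x_T$ one automatically gets $\tilde\rho(T)=x_T$. The one item that genuinely needs checking --- and which your proposal does not address --- is that this unique lift remains non-negative, i.e.\ stays in $\cP(F)$, rather than merely in the affine leaf.
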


\section{Examples} \label{section:examples}

We verify the various inequalities introduced above for the limiting dynamics of five examples in increasing order of complexity:
\begin{enumerate}[(a)]
\item The generalized Ornstein-Uhlenbeck processes with vanishing diffusion constant. We give conditions for the $\kappa$-entropy-convexity inequality with optimal constant. The constant coincides with the optimal lower bound for the Ricci-curvature, cf. Section 1.16 in \cite{BaGeLe14}.
\item The Kramers equation, or the underdamped Langevin equation with vanishing diffusion constant. The interesting feature is the non-reversibility of this system. We prove the $\kappa$-entropy-information inequality with optimal constant and show that although entropy decreases exponentially fast, information does not.
\item The Wright-Fisher model for $d$ species with parent independent mutation rates with vanishing diffusion constant. The large deviation principle gives a `non-standard' entropy. We establish the exponential decay of information in the setting that $d \geq 2$ and the $\kappa$-entropy-convexity inequality with optimal constant for $d = 2$.
\item The empirical magnetization for Glauber dynamics with inverse temperature $\beta \geq 0$ on the Curie-Weiss model. If $\beta \leq 1$, we establish a optimal $4(1-\beta)$-entropy-convexity bound.
\item The empirical law of mean-field interacting random walks on a hypercube $\{-1,1\}^N$, for which we establish the $\kappa$-entropy-convexity bound. In the non-interacting case this constant is $4/N$.
\end{enumerate}

\subsection{The generalized Ornstein-Uhlenbeck process}

An example where we can easily verify an $\kappa$-entropy-convexity inequality is for the Hamiltonian corresponding to the generalized Ornstein-Uhlenbeck process. Let $V : \bR^d \rightarrow [0,\infty)$ be some twice continuously differentiable convex function. Consider the following sequence of processes:
\begin{equation*}
\dd X_n(t) = - \nabla V(X_n(t)) \dd t + \frac{1}{ \sqrt{n}} \dd W(t).
\end{equation*}
The Freidlin-Wentzell large deviation principle of the trajectories of these processes gives an operator 
\begin{equation*}
H(x,p) = \frac{1}{2}\sum_i p_i^2 - p_i V_i(x),
\end{equation*}
where $V_i$ is the derivative of $V$ in the $i$-th coordinate. The associated entropy $S$ is given by $S(x) = 2V(x)$

\begin{theorem} 
Consider $H$ and $S$ introduced above. Then we have the entropy-convexity inequality with the largest constant $\kappa \in \bR$ such that the matrix
\begin{equation*}
\nabla \nabla V - \kappa \bONE
\end{equation*}
is non-negative definite. Consequently, the conclusions of Theorem \ref{theorem:convexity_of_entropy_along_interpolations} hold for the entropy $S(x) = 2 V(x)$. 
\end{theorem}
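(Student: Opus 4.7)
The plan is to verify the reversible form of the entropy-convexity inequality, \eqref{eqn:entropy_convexity_reversible}, by direct computation, and then appeal to Theorem \ref{theorem:convexity_of_entropy_along_interpolations}.

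First I would check that the system is reversible, $H = H^*$. Since $DS(x) = 2\nabla V(x)$, direct substitution gives
\begin{equation*}
H(x, 2\nabla V - p) = \tfrac{1}{2}|2\nabla V - p|^2 - \ip{2\nabla V - p}{\nabla V} = \tfrac{1}{2}|p|^2 - \ip{p}{\nabla V} = H(x,p),
\end{equation*}
so by the remark immediately after the definition of ECI it suffices to verify \eqref{eqn:entropy_convexity_reversible} pointwise in $x \in E^\circ$ and $p \in \bR^d$.

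Next I would record the derivatives of $H$: $H_p = p - \nabla V$, $H_x = -(\nabla^2 V)p$, $H_{pp} = \bONE$, and $H_{px} = -\nabla^2 V$, with $\nabla^2 V$ the symmetric Hessian of $V$. The left-hand side of \eqref{eqn:entropy_convexity_reversible} is immediate from Legendre duality: $\ip{p}{H_p} - H(x,p) = \tfrac{1}{2}|p|^2$, so it equals $\tfrac{\kappa}{2}|p|^2$. For the right-hand side I expand the three inner products. The only terms containing $\nabla V$ are $\ip{p}{(\nabla^2 V)\nabla V}$ (coming from $\ip{p}{H_{px}H_p}$) and $\ip{(\nabla^2 V) p}{\nabla V}$ (coming from $-\ip{H_x}{H_p}$); by symmetry of $\nabla^2 V$ these two are equal and cancel. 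The three remaining quadratic-in-$p$ contributions combine to give simply $\ip{p}{(\nabla^2 V)p}$.

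Thus \eqref{eqn:entropy_convexity_reversible} reduces to the pointwise matrix inequality $\tfrac{\kappa}{2}|p|^2 \leq \ip{p}{(\nabla^2 V(x))\,p}$ for every $x \in E^\circ$ and $p \in \bR^d$, i.e.\ to the non-negative definiteness of $\nabla^2 V(x) - \tfrac{\kappa}{2}\bONE$ uniformly in $x$. Extracting the largest such $\kappa$ yields the optimal entropy-convexity constant, and the conclusions of Theorem \ref{theorem:convexity_of_entropy_along_interpolations} then follow immediately. The computation is routine; the only delicate points are careful sign-tracking (both $H_x$ and $H_{px}$ carry minus signs) and noticing that the cancellation of the two $\nabla V$-terms is precisely what the symmetry of $\nabla^2 V$ provides.
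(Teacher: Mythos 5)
Your proof is correct and follows the same route as the paper's: verify $H = H^*$, then reduce the reversible ECI by a direct Legendre--duality computation to the pointwise inequality $\frac{\kappa}{2}|p|^2 \le \ip{p}{\nabla^2 V(x)\, p}$. Your write-up is somewhat more explicit (you track the cancellation of the two $\nabla V$-terms via the symmetry of the Hessian, which the paper just declares as a "straightforward" computation), and correctly notes that since $E=\bR^d$ has no boundary, Assumption \ref{assumption:entropic_interpolations_not_on_boundary} holds vacuously. One small remark: your computation shows the ECI constant is the largest $\kappa$ with $\nabla^2 V - \frac{\kappa}{2}\bONE \ge 0$, which is a factor of two away from the literal statement "$\nabla\nabla V - \kappa\bONE$ non-negative definite" in the theorem; this matches the paper's own displayed computation (LHS $=\frac{1}{2}|p|^2$, RHS $=\ip{p}{\nabla^2 V p}$), so the discrepancy lies in the theorem's phrasing rather than in your argument.
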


Clearly, in this setting Assumption \ref{assumption:entropic_interpolations_not_on_boundary} is satisfied. Thus this result holds for all entropic interpolations.

Note that this constant corresponds with the optimal lower bound on the classical Ricci-curvature, see Section 1.16 in \cite{BaGeLe14}.

\begin{proof}
It is immediate to verify that $H = H^*$, so we only check the entropy-convexity inequality for $H$. On one hand, we have $pH_p(x,p) - H(x,p) = \frac{1}{2} \sum_i p_i^2$, whereas on the other
\begin{multline*}
p H_{px}(x,p)H_p(x,p) - p H_{pp}(x,p)H_x(x,p) - H_p(x,p)H_x(x,p)  \\
= \sum_{i,j} p_i V_{i,j}(x) p_j.
\end{multline*}
\end{proof}

\subsection{The underdamped Langevin equation} \label{section:exp_decay_Langevin}

Next, we consider the empirical average of trajectories of particles and their momenta $(X_n(t),\rho_n(t)) \in \bR^{2d}$ evolving according to the underdamped Langevin equation with mass $m$, in a twice continuously differentiable potential $V : \bR^d \rightarrow \bR$, $V(x) = \sum_{i=1}^d V_i(x_i)$ given by
\begin{equation*}
\begin{bmatrix}
\dd X_n(t) \\ \dd \rho_n(t) 
\end{bmatrix}
= 
\begin{bmatrix} \frac{\rho_n(t)}{m} \dd t \\ - \nabla_x V(X_n(t)) \dd t - \gamma \frac{\rho_n(t)}{m} \dd t + \sqrt{\frac{2\gamma \theta}{n}} \dd W(t)\end{bmatrix},
\end{equation*}
where $\gamma, \theta > 0$ are two constants with physical interpretation, see \cite{DPZ13}, and where $\nabla_x$ is the gradient of $V$ in the position coordinates and where $W$ is a standard $d$-dimensional Brownian motion. Note that in contrast to the models considered above, the underdamped-Langevin dynamics are non-reversible. 

\smallskip

Sending $n$ to infinity, we have the Freidlin-Wentzell large deviation principle with Hamiltonian
\begin{equation*}
H\left(\begin{pmatrix}
x \\ \rho 
\end{pmatrix}, \begin{pmatrix}
p_x \\ p_\rho 
\end{pmatrix}\right) = \sum_{i=1}^d \frac{\rho_i}{m}p_{x,i} - \nabla_{x_i} V(x)p_{\rho_i} - \gamma \frac{\rho_i}{m}p_{\rho_i} + \gamma \theta |p_\rho|^2.
\end{equation*}
The momentum $\rho$ should not be confused with the second kind of momentum, i.e. the variables in the second input for $H$: the vector $(p_x,p_\rho)$.

If $V$ is growing sufficiently fast for $|x|$ large, the stationary measures $\mu_n$ of the dynamics are given by
\begin{equation*}
\mu_n(\dd x \, \dd \rho) = \frac{1}{Z_n} e^{-n\theta^{-1}(V(x) +  \frac{|\rho|^2}{2m})} \dd x \, \dd \rho,
\end{equation*}
where $Z_n$ is an appropriate normalising constant. This motivates the use of the entropy
\begin{equation*}
S(x,\rho) = \theta^{-1} V(x) + \frac{|\rho|^2}{2\theta m}.
\end{equation*}

The next result gives us the entropy-information inequality. Additionally, it shows that the stronger inequality \eqref{eqn:second_derivative_inequality_McKean_Vlasov} is not satisfied globally. 

\begin{proposition} 
Consider $H$ and $S$ corresponding to the underdamped Langevin equation. Then we have the $\frac{2\gamma}{m}$ entropy-information inequality, and the constant is optimal. Additionally, we have
\begin{multline} \label{eqn:langevin_entropy_convexity}
\ip{DS(x)}{H_{px}(x,0) H_p(x,0)} + \ip{D^2 S(x)H_p(x,0)}{H_p(x,0)} - \kappa I(x) \\
= \left(2\frac{\gamma}{m} - \kappa\right)\sum_{i=1}^d  \frac{\gamma }{\theta m^2}\rho_i^2 + \frac{\gamma}{\theta m^2} \rho_i \nabla V_i(x_i).
\end{multline}

Suppose we have $\nabla_{x_i} V(0) \geq 0$ for all $i$. Then $\Omega := \{(x,\rho) \in \bR^{2d} \, | \, \forall , i: \, x_i\rho_i \geq 0\}$ is closed under the dynamics of the McKean-Vlasov equation: i.e. if $(x(t),\rho(t))$ to $(\dot{x},\dot{\rho}) = H_{(x,\rho)}((x,\rho),0)$ with $(x(0),\rho(0)) \in \Omega_\beta$. Then $(x(t),\rho(t)) \in \Omega$ for all $t \geq 0$.
For such trajectories, we have exponential decay of $I$ with speed $\frac{2\gamma}{m}$.
\end{proposition}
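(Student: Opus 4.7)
The plan is to reduce every claim to an explicit computation that exploits the separable structure $V(x)=\sum_iV_i(x_i)$ of the Langevin Hamiltonian. First I would compute the information: from $DS=(\theta^{-1}\nabla V,\,\rho/(\theta m))$ and $H_p((x,\rho),0)=(\rho/m,\,-\nabla V-\gamma\rho/m)$, the two $\nabla V\cdot\rho$ cross-terms cancel in $-\ip{DS}{H_p((x,\rho),0)}$, giving the clean formula
\[
I(x,\rho)=\frac{\gamma}{\theta m^2}|\rho|^2,
\]
which depends only on the momentum. The constant $2\gamma/m$ in EII is optimal by Proposition~\ref{proposition:optimality_converse_inequality_via_matrix}: at a stationary minimum $(x_s,0)$ of $S$ one has
\[
H_{px}((x_s,0),0)=\begin{pmatrix}0 & \bONE/m\\ -D^2V(x_s) & -\gamma\bONE/m\end{pmatrix},
\]
and a direct block diagonalization shows that the infimum of $c$ for which $c\bONE+2H_{px}$ is positive definite is exactly $2\gamma/m$.

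The identity \eqref{eqn:langevin_entropy_convexity} is obtained by substituting $D^2S=\mathrm{diag}(\theta^{-1}D^2V,\,(\theta m)^{-1}\bONE)$, the block form of $H_{px}(x,0)$ above, and the formulas for $H_p|_{p=0}$ and $DS$ into $\ip{DS}{H_{px}H_p}+\ip{D^2SH_p}{H_p}$. The contributions $\rho^{T}D^2V(x)\rho/(\theta m^2)$ and $|\nabla V(x)|^2/(\theta m)$ appear with opposite signs in the two inner products and cancel, so after collecting the surviving terms one is left with exactly $\frac{2\gamma^2}{\theta m^3}|\rho|^2+\frac{2\gamma}{\theta m^2}\sum_i\rho_iV_i'(x_i)$; subtracting $\kappa I$ and regrouping yields the displayed form.

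For the invariance of $\Omega$ and the exponential decay of $I$ on $\Omega$, I would use the fact that the system decouples across the $d$ coordinates, so it suffices to analyse each planar $(x_i,\rho_i)$-system. On the boundary piece $\{x_i=0\}$ the identity $\dot x_i=\rho_i/m$ preserves the sign relation; on $\{\rho_i=0\}$ one has $\dot\rho_i=-V_i'(x_i)$, and the hypothesis $V_i'(0)\geq 0$ together with the Lyapunov property of $S$ (Proposition~\ref{proposition:Htheorem2}) prevents trajectories from crossing into $\{x_i\rho_i<0\}$. Granted invariance, \eqref{eqn:langevin_entropy_convexity} specialized to $\kappa=2\gamma/m$ becomes
\[
\frac{d}{dt}I(x(t),\rho(t))+\frac{2\gamma}{m}I(x(t),\rho(t))=-\frac{2\gamma}{\theta m^2}\sum_i\rho_i(t)V_i'(x_i(t)),
\]
whose right-hand side is nonpositive on $\Omega$ since each $\rho_iV_i'(x_i)$ is then $\geq 0$, and Gr\"onwall's inequality yields $I(x(t),\rho(t))\leq e^{-2\gamma t/m}I(x(0),\rho(0))$.

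The main obstacle I anticipate is precisely the forward invariance of $\Omega$: the assumption $V_i'(0)\geq 0$ is only a single pointwise condition at the origin, so translating it into the sign information for $V_i'(x_i(t))$ that one needs along the entire trajectory calls for a careful phase-portrait argument in each $(x_i,\rho_i)$-plane, combined with a barrier estimate using the decay of $S$ to confine trajectories to a region where the sign of $V_i'$ is controlled.
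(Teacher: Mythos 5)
Your computations of $I$, the block forms of $DS$, $D^2S$ and $H_{px}((x,\rho),0)$, and the cancellation of the $\rho^{T}D^2V\rho$ and $|\nabla V|^2$ contributions in the two inner products all match the paper's proof, which is an identical coordinate-by-coordinate computation for $d=1$ followed by tensorization; for the record, the cross term you obtain, $\frac{2\gamma}{\theta m^2}\sum_i\rho_i V_i'(x_i)$, is the correct one (the displayed identity in the statement drops a factor of $2$ on that term). The one place where your route genuinely differs is optimality of the constant: the paper disposes of this in a single line using $\inf_x V(x)=0$ (on the slice $\{V(x)=0\}$ one has $I(x,\rho)=\frac{2\gamma}{m}S(x,\rho)$ exactly, so no larger constant can work), whereas you invoke Proposition \ref{proposition:optimality_converse_inequality_via_matrix}. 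That proposition requires $D^2S(x_s)$, hence $D^2V(x_s)$, to be strictly positive definite, so it excludes degenerate minima; and the claim that $c\bONE+2H_{px}(x_s,0)$ itself is positive definite as soon as $c>2\gamma/m$ is not quite right — it is the product $D^2S(x_s)\bigl(c\bONE+2H_{px}(x_s,0)\bigr)$ whose symmetric part becomes block diagonal and positive definite. The paper's argument is shorter and applies more generally, so you should adopt it.

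The genuine gap is the forward invariance of $\Omega=\{(x,\rho):\ x_i\rho_i\geq 0\ \text{for all}\ i\}$, which you correctly single out as the obstacle but then do not close, and the device you propose would not close it. The monotone decay of $S$ along the McKean--Vlasov flow (Proposition \ref{proposition:Htheorem2}) provides no information at all about the sign of $x_i\rho_i$, so a "Lyapunov barrier" cannot prevent crossings. In fact, on the boundary piece $\{\rho_i=0,\ x_i>0\}$ you have $\dot x_i=0$ and $\dot\rho_i=-V_i'(x_i)$; the hypothesis $V_i'(0)\geq 0$ together with convexity of $V_i$ gives $V_i'(x_i)\geq 0$ there, hence $\dot\rho_i\leq 0$ and, whenever $V_i'(x_i)>0$, the trajectory moves \emph{directly into} $\{x_i\rho_i<0\}$ rather than away from it. So the phase-portrait argument you would need is not a refinement of the Lyapunov idea but an entirely separate analysis, and the sign condition on $V_i'$ at the origin alone is not enough to produce it. (The paper itself handles this step by citing Proposition \ref{proposition:EII_entropy_decay}, which only supplies the Gr\"onwall step once invariance is granted; the invariance itself is asserted without proof. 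Your Gr\"onwall argument on $\Omega$, assuming invariance, is correct and is what that citation is meant to convey.)
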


\begin{remark}
For discussion of a setting where entropy decays exponentially, but the information does not, see Section 4.2 in \cite{CDP09}. For an example in a continuous setting see the example by Helffer following Proposition 1.5 in \cite{Le01}.
\end{remark}

\begin{proof}
The calculations in this setting are tedious but straightforward, we only do this for the setting $d = 1$, so that the full result follows by Proposition \ref{proposition:tensorization_for_product_systems}. We give the formula's for the main quantities:
\begin{equation*}
DS(x,\rho) =  \begin{bmatrix}
\theta^{-1} \nabla_x V(x) \\ \frac{\theta^{-1} \rho}{m}  
\end{bmatrix}, \qquad 
H_{(p_x,p_\rho)}((x,\rho),0) = \begin{bmatrix}
\rho/m \\ -\nabla_x V(x) - \frac{\gamma \rho}{m}  
\end{bmatrix}.
\end{equation*}
We conclude that $I(x,\rho) = \frac{\gamma \rho^2}{\theta m^2}$. This immediately yields the $\frac{2\gamma}{m}$ entropy-information inequality. Because $\inf_x V(x) = 0$ the constant is optimal.

Additionally, we have
\begin{align*}
D^2S(x,\rho) & =  \begin{bmatrix}
\theta^{-1} \nabla_x \nabla_x V(x) & 0 \\ 0 & \frac{1}{\theta m}  
\end{bmatrix}, \\
H_{(p_x,p_\rho),(x,\rho)}((x,\rho),0) & = \begin{bmatrix}
0 & 1/m \\ -\nabla_x\nabla_x V(x) & - \frac{\gamma }{m}  
\end{bmatrix}.
\end{align*}
Carrying out all multiplications yields \eqref{eqn:langevin_entropy_convexity}. Closedness of $\Omega$ under the dynamics and exponential decay of $I$ follows from Proposition \ref{proposition:EII_entropy_decay}
\end{proof}

For specific potentials $V$, we can extend our analysis. The next proposition is in the setting where $d=1$ and $V$ is quadratic, the proof of which is straightforward.

\begin{proposition}
Suppose $V(x) = \frac{1}{2}x^2$. Pick $\beta \in [0,\gamma / m]$ and set $\Omega_\beta := \{(x,\rho) \in \bR^2 \, | \, \rho(\rho\beta +x) \geq 0\}$. Then $\Omega_\beta$ is closed under the dynamics of the McKean-Vlasov equation: i.e. if $(x(t),\rho(t))$ to $(\dot{x},\dot{\rho}) = H_{(x,\rho)}((x,\rho),0)$ with $(x(0),\rho(0)) \in \Omega_\beta$. Then $(x(t),\rho(t)) \in \Omega_\beta$ for all $t \geq 0$. Additionally, we have
\begin{equation*}
S(x(t),\rho(t)) \leq e^{-\kappa t}S(x(0),\rho(0)),
\end{equation*}
for $\kappa = \frac{2\gamma}{m} - \beta$.
\end{proposition}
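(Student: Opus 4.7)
The plan is to reduce to Proposition~\ref{proposition:EII_entropy_decay} by showing that inequality~\eqref{eqn:second_derivative_inequality_McKean_Vlasov} holds along trajectories inside $\Omega_\beta$ with constant $\kappa = 2\gamma/m - \beta$. Specialising~\eqref{eqn:langevin_entropy_convexity} to $V(x) = \tfrac12 x^2$ (so $V'(x)=x$) and to $\kappa = 2\gamma/m - \beta$, the ``gap''
\[
\ip{DS}{H_{px}H_p\bigr|_{p=0}} + \ip{D^2 S\,H_p\bigr|_{p=0}}{H_p\bigr|_{p=0}} - \kappa I
\]
simplifies to a positive multiple of $\rho(\beta\rho + x)$, which is non-negative exactly on $\Omega_\beta$. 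Hence once I know the trajectory remains in $\Omega_\beta$, \eqref{eqn:second_derivative_inequality_McKean_Vlasov} holds along it with the claimed $\kappa$.

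For the invariance, I would set $\phi(x,\rho) := \rho(\beta\rho + x)$ and differentiate along the flow $\dot x = \rho/m$, $\dot\rho = -x - \gamma\rho/m$ to obtain a quadratic form in $(x,\rho)$. The boundary $\{\phi = 0\}$ decomposes into the two lines $\{\rho = 0\}$ and $\{x + \beta\rho = 0\}$, and the tangency/inward-pointing check must be carried out on each. On $\{x = -\beta\rho\}$ one finds $\dot\phi = (\rho^2/m)(1 + m\beta^2 - \beta\gamma)$, whose sign on $\beta \in [0,\gamma/m]$ I would analyse by studying the quadratic $\beta \mapsto 1 + m\beta^2 - \beta\gamma$ and its critical point at $\beta = \gamma/(2m)$; on $\{\rho = 0\}$ a more delicate argument is needed, since a direct computation gives $\dot\phi = -x^2$.

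Once invariance is in place, the proof of Proposition~\ref{proposition:EII_entropy_decay} is essentially trajectory-wise and applies: \eqref{eqn:second_derivative_inequality_McKean_Vlasov} translates into $\tfrac{d}{dt}I(x(t),\rho(t)) \leq -\kappa I(x(t),\rho(t))$, so $I$ decays exponentially along the trajectory; integrating $\tfrac{d}{dt}S = -I$ from $t$ to $\infty$, and using that the only stationary point of the linear flow is the origin (so $(x(t),\rho(t)) \to (0,0)$ and $S_\infty = S(0,0) = 0$), yields $\kappa S \leq I$ along the trajectory. Grönwall then gives the bound $S(x(t),\rho(t)) \leq e^{-\kappa t} S(x(0),\rho(0))$.

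The main obstacle I anticipate is the invariance check on the component $\{\rho = 0\}$ of the boundary, since the vector field does not obviously point into $\Omega_\beta$ there; everything else is either the calculation just indicated or a direct invocation of the machinery developed in Section~\ref{section:decay_of_entropy}.
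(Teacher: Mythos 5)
Your overall strategy---verify \eqref{eqn:second_derivative_inequality_McKean_Vlasov} inside $\Omega_\beta$, prove forward invariance of $\Omega_\beta$, and conclude via Proposition~\ref{proposition:EII_entropy_decay}---is the natural one, and since the paper provides no proof of this statement there is nothing in the text to compare it against. However, the issue you flag on the $\{\rho = 0\}$ component of $\partial\Omega_\beta$ is not one a ``more delicate argument'' can repair: it is fatal. At $(x,0)$ with $x > 0$ the set $\Omega_\beta$ locally coincides with $\{\rho \geq 0\}$, yet $\dot\rho = -V'(x) = -x < 0$, so the flow crosses transversally into $\{\rho < 0\} \subseteq \Omega_\beta^c$; your computation $\dot\phi = -x^2 < 0$ there is a strict outflow, not a tangency. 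Forward invariance of $\Omega_\beta$ therefore genuinely fails, and the trajectory-wise application of Proposition~\ref{proposition:EII_entropy_decay} cannot get started.

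Worse, the stated conclusion itself appears to be false. Take $m = \gamma = \theta = 1$ and $\beta = 0$, so $\kappa = 2$, and $(x_0,\rho_0) = (1,1) \in \Omega_0$: then $S(0) = 1$ and $\tfrac{d}{dt}S(0) = -I(0) = -\rho_0^2 = -1 > -2 = -\kappa S(0)$, hence $S(t) > e^{-\kappa t}S(0)$ for all small $t > 0$. Even more simply, any $(x,0) \in \Omega_\beta$ with $x \neq 0$ has $S(x,0) > 0$ but $I(x,0) = 0$, so $\kappa S \leq I$ fails at the initial time for every $\kappa > 0$. You should also be aware that the formula you inherit from \eqref{eqn:langevin_entropy_convexity} seems to carry a typo: a direct calculation of $\dot I = 2\gamma\rho\dot\rho/(\theta m^2)$ shows the right side of \eqref{eqn:second_derivative_inequality_McKean_Vlasov} minus $\kappa I$ equals $\tfrac{\gamma}{\theta m^2}\rho\bigl(\beta\rho + 2V'(x)\bigr)$, not $\tfrac{\gamma}{\theta m^2}\rho\bigl(\beta\rho + V'(x)\bigr)$, so the region of validity of the second-order inequality does not even coincide with $\Omega_\beta$ as defined. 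In short, the step you anticipate to be the hard one is not merely hard; it cannot be carried out because the claimed invariance and the claimed decay both fail.
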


\subsection{The Wright-Fisher model} \label{section:Wright_Fisher}

Set $E = \cP(\{1,\dots,d\}) = \{x \in \bR^d \, | \, x_i \geq 0, \sum x_i = 1\}$. The large deviations of the trajectories of the Wright-Fisher model are considered in \cite{DaFe98}, and the Hamiltonian corresponding to this LDP is given by
\begin{equation*}
H(x,p) = \frac{1}{2} \sum_{i,j} x_i(\delta_{ij} - x_j) p_i p_j + \sum_{i = 1}^d\left(\sum_{j \neq i} x_j q_{ji} - x_i q_{ij} \right) p_i,
\end{equation*}
where $q_{ji}$ represents the mutation rate from $j$ to $i$.
In the case that the mutation rates are parent independent: $q_{ji} = \frac{1}{2}\mu_i > 0$, for $i \neq j$, the stationary measures of the associated Wright-Fisher processes with vanishing diffusion coefficient have entropy $S(x)$ given by
\begin{equation*}
S(x) =  \sum_{i=1}^d \mu_i \log \frac{\mu_i}{\mu x_i}.
\end{equation*}

For parent independent mutation rates, we have the following non-optimal result.

\begin{proposition} \label{proposition:EII_for_Wright_Fisher}
Consider $H$ and $S$ as above in the setting that $q_{ji} = \frac{1}{2}\mu_i > 0$ for all $i,j$. Define $\mu = \sum_i \mu_i$. Then $H$ and $S$ satisfy \eqref{eqn:second_derivative_inequality_McKean_Vlasov} and the entropy-information inequality with constant $\frac{1}{2} \mu $.
\end{proposition}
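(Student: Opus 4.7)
My plan is to first reduce the problem to standard divergence inequalities between probability measures, then handle the stronger bound by exploiting the fact that the McKean--Vlasov flow is exactly linear in this model.

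First I would compute the elementary quantities. From the form of $S$ I get $D S(x)_i = -\mu_i/x_i$, and from $H$ with parent-independent rates $q_{ji} = \frac{1}{2}\mu_i$ I get
\begin{equation*}
H_p(x,0)_i \;=\; \tfrac{1}{2}\mu_i(1-x_i) - \tfrac{1}{2}x_i(\mu-\mu_i) \;=\; \tfrac{\mu}{2}(\pi_i - x_i),
\end{equation*}
where $\pi_i := \mu_i/\mu$. Hence the McKean--Vlasov equation is the \emph{linear} ODE $\dot x_i = \tfrac{\mu}{2}(\pi_i - x_i)$, with stationary point $\pi$ and explicit solution $x_i(t) = \pi_i + e^{-\mu t/2}(x_i(0)-\pi_i)$. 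A short calculation then gives
\begin{equation*}
I(x) \;=\; -\ip{DS(x)}{H_p(x,0)} \;=\; \tfrac{\mu^2}{2}\sum_i \frac{(\pi_i - x_i)^2}{x_i} \;=\; \tfrac{\mu^2}{2}\,\chi^2(\pi\,\|\,x),
\end{equation*}
while rewriting $S$ gives $S(x) = \mu\sum_i \pi_i \log(\pi_i/x_i) = \mu\, \mathrm{KL}(\pi\,\|\,x)$ (up to the constant $\mu\log\mu - \sum_i\mu_i\log\mu_i$ coming from normalization of $\pi$, which vanishes when $\sum_i\pi_i=1$).

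For the entropy-information inequality, the point is that the chi-squared divergence dominates the Kullback--Leibler divergence: applying the elementary bound $\log t \leq t - 1$ to $t = \pi_i/x_i$ yields
\begin{equation*}
\mathrm{KL}(\pi\,\|\,x) \;=\; \sum_i \pi_i \log\!\tfrac{\pi_i}{x_i} \;\leq\; \sum_i \pi_i\!\left(\tfrac{\pi_i}{x_i}-1\right) \;=\; \chi^2(\pi\,\|\,x).
\end{equation*}
Multiplying by $\mu^2/2$ gives $\tfrac{\mu}{2} S(x) \leq I(x)$, which is exactly EII$(\mu/2)$.

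For the stronger inequality \eqref{eqn:second_derivative_inequality_McKean_Vlasov}, rather than expanding $H_{px}$ and $D^2S$ term-by-term I would use the equivalent form $\tfrac{\dd}{\dd t} I(x(t)) \leq -\kappa I(x(t))$ along the flow. Writing $y_i(t) := \pi_i - x_i(t)$, which satisfies $\dot y_i = -\tfrac{\mu}{2}y_i$, a direct differentiation of $I(x(t)) = \tfrac{\mu^2}{2}\sum_i y_i^2/x_i$ gives
\begin{equation*}
\tfrac{\dd}{\dd t}I(x(t)) \;=\; -\tfrac{\mu^3}{4}\sum_i \frac{y_i^2(x_i + \pi_i)}{x_i^2},
\end{equation*}
while $-\tfrac{\mu}{2} I(x(t)) = -\tfrac{\mu^3}{4}\sum_i y_i^2 x_i/x_i^2$. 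Subtracting, the required inequality reduces to $\sum_i y_i^2\pi_i/x_i^2 \geq 0$, which is trivially true. Applying Proposition \ref{proposition:EII_entropy_decay} (with $S_\infty = S(\pi) = 0$) then recovers EII$(\mu/2)$ as a consequence and extends it to exponential decay of $I$ along the flow.

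The main obstacle I anticipate is essentially notational: keeping straight which variable is the probability simplex parameter versus the Lagrangian momentum variable, and making sure the computation of $I$ is clean enough that $\chi^2(\pi\|x)$ appears without extra boundary terms. Everything else is a direct computation made tractable by the linearity of $\dot x = H_p(x,0)$.
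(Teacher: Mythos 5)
Your proof is correct but takes a genuinely different (and arguably more illuminating) route than the paper's. The paper verifies the weaker criterion \eqref{eqn:reduction:EII_for_convex_S} directly by computing $H_p(x,0)$ and $H_{px}(x,0)$ and observing that $\ip{DS(x)}{H_{px}(x,0)H_p(x,0)} = \tfrac{1}{2}\mu\, I(x)$ with \emph{equality}; it then discards the $\ip{D^2 S(x)H_p(x,0)}{H_p(x,0)}$ term using convexity of $S$ to pass to the full \eqref{eqn:second_derivative_inequality_McKean_Vlasov}, and obtains the EII only indirectly through Proposition~\ref{proposition:EII_entropy_decay} using $S_\infty = S(\pi) = 0$. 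You instead identify $S(x) = \mu\,\mathrm{KL}(\pi\,\|\,x)$ and $I(x) = \tfrac{\mu^2}{2}\,\chi^2(\pi\,\|\,x)$ (both correct, with $\pi_i = \mu_i/\mu$), so that EII$(\mu/2)$ follows \emph{directly} from the elementary divergence bound $\mathrm{KL} \leq \chi^2$, without ever touching \eqref{eqn:second_derivative_inequality_McKean_Vlasov}; and for the latter you exploit that $\dot{x} = H_p(x,0)$ is exactly the linear ODE $\dot{y}_i = -\tfrac{\mu}{2}y_i$ with $y_i = \pi_i - x_i$ and differentiate $I(x(t))$ along the flow. Note that the leftover nonnegative quantity $\tfrac{\mu^3}{4}\sum_i \pi_i y_i^2/x_i^2$ that you drop in the last step is precisely the $D^2 S$ contribution the paper drops by convexity — the two verifications of \eqref{eqn:second_derivative_inequality_McKean_Vlasov} are the same computation in different coordinates. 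What your route buys is the probabilistic reading of $S$ and $I$ as scaled KL and $\chi^2$ divergences, plus a self-contained EII proof; what the paper's route buys is that \eqref{eqn:reduction:EII_for_convex_S} is a general soft criterion for convex $S$ that does not require solving the McKean--Vlasov flow explicitly.
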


In the proof below, verify \eqref{eqn:reduction:EII_for_convex_S} only, which is sub-optimal. For the $d=2$ case, we show in Theorem \ref{theorem:WF_two_species_entropy_convexity} below that we can improve upon this constant and extend it to an entropy-convexity inequality.

\begin{proof} 
For the verification of \eqref{eqn:second_derivative_inequality_McKean_Vlasov} with constant $\frac{1}{2} \mu $ we observe that $x \mapsto S(x)$ is convex, so it suffices to verify \eqref{eqn:reduction:EII_for_convex_S}. Thus, we calculate the vector $H_p(x,0)$ and matrix $H_{px}(x,0)$. We find
\begin{multline*}
H_{p_j}(x,0) = \frac{1}{2} \sum_{l \neq j} x_l \mu_j - x_j \mu_l \\
= \frac{1}{2}\left((1-x_j)\mu_j - x_j (\mu-\mu_j) \right) = \frac{1}{2}\left(\mu_j - x_j \mu \right),
\end{multline*}
\begin{equation*}
H_{p_i, x_j}(x,0) = \begin{cases}
0 & \text{for } i \neq j, \\
- \frac{1}{2} \mu & \text{for } i = j.
\end{cases} 
\end{equation*}
We find that
\begin{equation*}
\sum_j H_{p_i,x_j}(x,0) H_{p_j}(x,0) = - \frac{1}{2} \mu H_{p_i}(x,0),
\end{equation*}
and as a consequence
\begin{equation*}
\ip{DS(x)}{H_{px}(x,0)H_p(x,0)} = - \frac{1}{2} \mu \ip{DS(x)}{H_p(x,0)} = \frac{1}{2}\mu I(x).
\end{equation*}
\end{proof}

Before proving the entropy-convexity inequality in the $d=2$ case, we verify Assumption \ref{assumption:entropic_interpolations_not_on_boundary}. First, note that we can re-express the model in terms of $x = x_2 \in [0,1]$, so that $(x_1,x_2) = (1-x,x)$. In the variable $x$, the Hamiltonian is given by
\begin{equation*}
H(x,p) = \frac{1}{2}a(x)p^2 - b(x)p, \quad a(x) = x(1-x), \quad b(x) = \frac{1}{2}\left(x \mu_1 - (1-x)\mu_2\right)
\end{equation*}
and the entropy $S$ reduces in this setting to
\begin{equation*}
S(x) = \mu_1 \log \frac{\mu_1}{\mu(1-x)} + \mu_2 \log \frac{\mu_2}{\mu x}.
\end{equation*}

\begin{lemma} \label{lemma:WF_cost_for_stationarity_is_smaller_in_interior}
Let $\mu_1, \mu_2 > 0$ and $\mu = \mu_1 + \mu_2$. Then assumption \ref{assumption:entropic_interpolations_not_on_boundary} is satisfied: 
\begin{enumerate}[(a)]
\item $H = H^*$,
\item $H_p(x,0) > 0$ and $H_p(x,1) < 0$,
\item the maps $x \mapsto \cL(x,0)$ and $x \mapsto S(x)$ are decreasing on an open neighbourhood $U_{0}$ of $0$ and increasing on an open neighbourhood $U_1$ of $1$,
\item we have
\begin{align*}
& \lim_{x \downarrow 0} \frac{\cL(x,0) - \cL(0,0)}{S(x) - S(0)} = \infty \\
& \lim_{x \uparrow 1} \frac{\cL(x,0) - \cL(1,0)}{S(x) - S(1)} = \infty.
\end{align*}
\end{enumerate}
As a consequence, Assumption \ref{assumption:entropic_interpolations_not_on_boundary} is satisfied for this model.
\end{lemma}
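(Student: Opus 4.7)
The approach is to verify, in the one-dimensional reduction, the four hypotheses (a)--(d) of Proposition~\ref{proposition:trajectories_in_interior} and then invoke that proposition. Everything hinges on the single algebraic identity
\[
DS(x) \;=\; \frac{\mu_1}{1-x} - \frac{\mu_2}{x} \;=\; \frac{x\mu - \mu_2}{x(1-x)} \;=\; \frac{2b(x)}{a(x)},
\]
which identifies $DS(x)$ with twice the reflection axis of the quadratic $p \mapsto H(x,p) = \tfrac{1}{2}a(x)p^2 - b(x)p$.

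From that identity, (a) is immediate: the reflection $p \mapsto DS(x) - p$ about $b(x)/a(x)$ preserves $H(x,\cdot)$, so $H^* = H$. For (b) I just evaluate $H_p(x,0) = -b(x)$ at the endpoints: $\mu_2/2 > 0$ at $x=0$ and $-\mu_1/2 < 0$ at $x=1$. Condition (c), strict monotonicity of $S$ and of $\cL(x,0) = b(x)^2/(2a(x)) = (x\mu-\mu_2)^2/(8x(1-x))$ on one-sided neighbourhoods of $0$ and $1$, follows by reading off the sign of $DS(x)$ (which blows up to $\mp\infty$ at the endpoints) and by factoring
\[
\frac{d}{dx}\cL(x,0) \;=\; \frac{(x\mu-\mu_2)\bigl[2\mu x(1-x) - (x\mu-\mu_2)(1-2x)\bigr]}{8\bigl(x(1-x)\bigr)^2}
\]
and inspecting the sign of each bracket as $x \to 0$ and $x \to 1$.

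The only genuinely delicate step, and the expected main obstacle, is (d): because $\mu_1, \mu_2 > 0$, both $\cL(0,0)$ and $S(0)$ are $+\infty$, so the stated limit must be read as a comparison of boundary blow-up rates. The Lagrangian diverges algebraically, $\cL(x,0) \sim \mu_2^2/(8x)$, whereas the entropy diverges only logarithmically, $S(x) \sim -\mu_2 \log x$, so the ratio behaves like $-1/(8x\log x) \to +\infty$ as $x \downarrow 0$, and symmetrically at $x=1$ with $\mu_1$ in place of $\mu_2$. This asymptotic dominance of $\cL$ over $S$ is the natural reading of (d) at an infinite-value boundary, and is exactly what enforces the barrier preventing entropic interpolations from touching $\{0,1\}$; once this is in hand, the conclusion of Proposition~\ref{proposition:trajectories_in_interior} applies verbatim to give Assumption~\ref{assumption:entropic_interpolations_not_on_boundary}.
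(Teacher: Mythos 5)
Your proof is correct and follows the same route as the paper: it verifies conditions (a)--(d) of Proposition~\ref{proposition:trajectories_in_interior} from the identity $DS(x) = 2b(x)/a(x)$ and the resulting formula $\cL(x,0) = b(x)^2/(2a(x))$. For (d), where both $\cL(0,0)$ and $S(0)$ are $+\infty$, you compare the leading algebraic vs.\ logarithmic boundary blow-ups $\cL(x,0)\sim\mu_2^2/(8x)$ and $S(x)\sim-\mu_2\log x$; the paper reaches the same conclusion by a L'H\^opital-style argument, showing that $\frac{\dd}{\dd x}\cL(x,0) = \tfrac{1}{2}DS(x)\cdot\bigl(2a(x)b'(x)-a'(x)b(x)\bigr)/a(x)$ where the second factor diverges to $+\infty$ at the boundary.
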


\begin{proof}
Using that $DS(x) = \frac{2b(x)}{a(x)}$, it is straightforward to verify that $H = H^*$. We have $H_p(x,0) = -b(x)$, so that $H_p(0,0) > 0$ and $H_p(1,0) < 0$ by the positivity of $\mu_1$ and $\mu_2$.

As $DS(x) = \frac{2b(x)}{a(x)}$, we find that
\begin{equation*}
\cL(x,0) = - H(x,\frac{1}{2}DS(x)) = \frac{1}{2} \frac{b(x)^2}{a(x)}.
\end{equation*}
Differentiating this with respect to $x$ yields
\begin{equation*}
2 \frac{\dd}{\dd x} \cL(x,0) = \frac{2 a(x) b(x)b'(x) - a'(x)b(x)^2}{a(x)^2}. 
\end{equation*}
To verify the third claim for $\cL$, we need to know the sign of this derivative. As the denominator is non-negative, we calculate the numerator(recall that $\mu = \mu_1 + \mu_2)$:
\begin{equation*}
2 a(x) b(x)b'(x) - a'(x)b(x)^2 = \frac{1}{4}\left[x \mu - \mu_2\right] \left[\left(\mu - 2 \mu_2\right) x + \mu \right].
\end{equation*}
Thus the claim in (c) for $\cL$ follows as this quantity is negative for $x$ close to $0$ and positive for $x$ close to $1$. The statement for $S$ is clear.

\smallskip

We verify (d) only for the left-hand boundary. The claim follows if we can show that $\frac{\dd}{\dd x} \cL(x,0)$ diverges to $- \infty$ faster than $DS(x)$ diverges to $-\infty$.

Note that
\begin{equation*}
2\frac{\dd}{\dd x} \cL(x,0) = DS(x) \frac{2a(x)b'(x) - a'(x)b(x)}{a(x)}
\end{equation*}
As $DS(x) < 0$ for $x$ close to $0$, we have to show that
\begin{equation*}
\frac{2a(x)b'(x) - a'(x)b(x)}{a(x)} = \frac{x \mu_1 - x \mu_2 + \mu_2}{2x(1-x)}
\end{equation*}
diverges to $\infty$ as $x \downarrow 0$. This, however, is immediate from the $\frac{1}{x}$ term in the denominator and the positive $\mu_2$ term in the numerator.
\end{proof}

In this one-dimensional setting, we improve the constant of the entropy-information inequality of Proposition \ref{proposition:EII_for_Wright_Fisher} and extend it to the entropy-convexity inequality. 

\begin{theorem}[Wright-Fisher model with positive mutation rates for two species] \label{theorem:WF_two_species_entropy_convexity}
Let $\mu_1, \mu_2 > 0$ and let $\mu = \mu_1 + \mu_2$. Let $H$ be the Hamiltonian given by
\begin{equation*}
H(x,p) = \frac{1}{2}a(x)p^2 - b(x)p, \quad a(x) = x(1-x), \quad b(x) = \frac{x \mu_1 - (1-x)\mu_2}{2},
\end{equation*}
and where $S$ is given by
\begin{equation*}
S(x) = \mu_1 \log \frac{\mu_1}{\mu(1-x)} + \mu_2 \log \frac{\mu_2}{\mu x}.
\end{equation*}
Then $H$ satisfies the entropy-convexity inequality and the conclusions of Theorem \ref{theorem:convexity_of_entropy_along_interpolations} with respect to $S$ with constant 
\begin{equation*}
\kappa = \frac{1}{2}\mu + \frac{1}{2}\sqrt{\mu^2 - (\mu_1 - \mu_2)^2} = \frac{1}{2}\mu + \sqrt{\mu_1 \mu_2}
\end{equation*}
for all entropic interpolations. Additionally, this constant is optimal for the entropy-convexity inequality.
\end{theorem}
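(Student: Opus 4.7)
The plan is to use the reversibility $H = H^*$ (established in Lemma \ref{lemma:WF_cost_for_stationarity_is_smaller_in_interior}) to reduce the entropy-convexity inequality to the single-variable form \eqref{eqn:entropy_convexity_reversible}, and then exploit the fact that in this one-dimensional setting both sides are polynomials in $p$ of degree at most three. From $H(x,p) = \tfrac{1}{2}a(x)p^2 - b(x)p$ one reads off $H_p = ap - b$, $H_x = \tfrac{1}{2}a'p^2 - b'p$, $H_{pp} = a$, and $H_{px} = a'p - b'$, whence the left-hand side of \eqref{eqn:entropy_convexity_reversible} collapses to $\tfrac{1}{2}a(x)p^2$. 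Expanding the right-hand side $p H_{px} H_p - p H_{pp} H_x - H_x H_p$, the cubic and linear terms in $p$ cancel identically, leaving $\bigl(a(x)b'(x) - \tfrac{1}{2} a'(x) b(x)\bigr) p^2$. The ECI at $(x,p)$ therefore reduces to
\begin{equation*}
\kappa\, a(x) \;\leq\; 2 a(x) b'(x) - a'(x) b(x), \qquad x\in(0,1),
\end{equation*}
with no further dependence on $p$.

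Substituting $a(x) = x(1-x)$, $b'(x) = \mu/2$ and $b(x) = \tfrac{1}{2}(x\mu - \mu_2)$ and simplifying gives $2a(x)b'(x) - a'(x)b(x) = \tfrac{1}{2}\bigl(x\mu_1 + (1-x)\mu_2\bigr)$, so the ECI is equivalent to $\kappa \leq f(x)$ on $(0,1)$ with
\begin{equation*}
f(x) \;=\; \frac{\mu_2}{2x} + \frac{\mu_1}{2(1-x)}.
\end{equation*}
A one-variable calculus exercise shows that $f$ is strictly convex on $(0,1)$ with unique minimizer $x^* = \sqrt{\mu_2}/(\sqrt{\mu_1}+\sqrt{\mu_2})$ and minimum value $f(x^*) = \tfrac{1}{2}(\sqrt{\mu_1}+\sqrt{\mu_2})^2 = \tfrac{1}{2}\mu + \sqrt{\mu_1\mu_2}$, which is precisely the claimed constant. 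Optimality is automatic: for any $\kappa' > f(x^*)$ the pointwise condition already fails on a neighbourhood of $x^*$, so the entropy-convexity inequality cannot hold globally with constant $\kappa'$. Assumption \ref{assumption:entropic_interpolations_not_on_boundary} has been verified in Lemma \ref{lemma:WF_cost_for_stationarity_is_smaller_in_interior}, so Theorem \ref{theorem:convexity_of_entropy_along_interpolations} transfers this pointwise ECI to the stated convexity estimates along every entropic interpolation.

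There is no serious conceptual obstacle; the subtlety worth flagging is that the minimizer $x^*$ of $f$ is in general distinct from the McKean-Vlasov stationary point $x_s = \mu_2/\mu$ (the two coincide only when $\mu_1 = \mu_2$). In particular, a naive linearisation at $x_s$ via Proposition \ref{proposition:optimality_converse_inequality_via_matrix} would give a strictly worse constant, so one cannot shortcut the global minimisation of $f$ on $(0,1)$ by local analysis at the equilibrium of the flow. Beyond this bookkeeping point, the argument is a direct polynomial expansion followed by elementary optimisation.
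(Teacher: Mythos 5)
Your proposal is correct and follows essentially the same approach as the paper: both expand the reversible ECI \eqref{eqn:entropy_convexity_reversible}, note that the cubic and linear terms in $p$ cancel so that only the $p^2$ coefficient survives, and reduce to the pointwise condition $\kappa\, a(x) \leq 2a(x)b'(x) - a'(x)b(x)$ on $(0,1)$. The only (cosmetic) difference is the final optimization: you divide by $a(x)$ and minimize the rational function $f(x) = \tfrac{\mu_2}{2x} + \tfrac{\mu_1}{2(1-x)}$ by one-variable calculus, while the paper keeps the quadratic $f_\kappa(x) = 2\kappa x^2 + (\mu_1 - \mu_2 - 2\kappa)x + \mu_2$ and solves for the $\kappa$ at which its minimum touches zero — both yield the same constant, and optimality follows in the same way because equality is attained at the interior minimizer.
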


\begin{proof}
To start, we find
\begin{equation*}
pH_p(x,p) - H(x,p) = \frac{1}{2} a(x) p^2.
\end{equation*}
A second tedious, but straightforward, calculation yields
\begin{multline*}
p H_{px}(x,p) H_p(x,p) - p H_{pp}(x,p)H_x(x,p) - H_x(x,p) H_p(x,p) \\
= \left[a(x)b'(x) - \frac{1}{2}a'(x) b(x)\right] p^2.
\end{multline*}
Using the definitions of $a$ and $b$, we conclude that we need to find the largest $\kappa$ for which 
\begin{equation*}
2\kappa (x - x^2) \leq (\mu_1 - \mu_2)x + \mu_2
\end{equation*}
is satisfied for all $x \in [-1,1]$. As $\mu_1, \mu_2 >0$, there is at least some $\kappa > 0$ for which this inequality is satisfied. To find the largest $\kappa > 0$ for which this is the case, the minimum of
\begin{equation*}
f_\kappa(x) := 2\kappa x^2 + (\mu_1 - \mu_2 - 2\kappa) x + \mu_2
\end{equation*}
for $x \in [0,1]$ should equal $0$. As $f_\kappa$ is convex for $\kappa > 0$, the derivative in $x$ of $f_\kappa$ is increasing. As $f_\kappa(0), f_\kappa(1) > 0$, $\kappa$ must be such that $f'_\kappa(0) < 0$ and $f'_\kappa(1) > 0$. We conclude that $2\kappa > |\mu_1 - \mu_2|$. The location of the minimum of $f_\kappa$ is found at
\begin{equation*}
x_{min}(\kappa) = \frac{1}{2} - \frac{\mu_1 - \mu_2}{4 \kappa}.
\end{equation*}
Evaluating the parabola in its minimum and putting this equal to $0$ gives an equation for the value of $\kappa$:
\begin{equation*}
(\mu_1 - \mu_2 - 2\kappa)^2 - 8 \kappa \mu_2 = 0
\end{equation*}
which is equivalent to solving
\begin{equation*}
4\kappa^2 - 4 \mu \kappa + (\mu_1 - \mu_2)^2 = 0.
\end{equation*}
Both zeros are non-negative, but an elementary computation shows that the smallest solution is smaller than $\frac{1}{2}|\mu_1 - \mu_2|$. We conclude that the largest suitable $\kappa$ equals 
\begin{equation*}
\kappa = \frac{1}{2}\mu + \frac{1}{2}\sqrt{\mu^2 - (\mu_1 - \mu_2)^2} = \frac{1}{2}\mu + \sqrt{\mu_1 \mu_2}.
\end{equation*}
We did not use any inequalities in the identification of $\kappa$, which implies that the constant is optimal.
\end{proof}

\subsection{Glauber dynamics for the Curie-Weiss model} \label{section:Glauber_for_CW}

The fourth example considers the limiting dynamics of the magnetization of the Curie-Weiss model evolving under Glauber dynamics with potential $V(x) = -\frac{1}{2}\beta x^2$, see for example 2.9 and 2.12 in \cite{Kr16b} or \cite{Co89,Le95}.

The Hamiltonian is given by 
$H : [-1,1] \times \bR \rightarrow \bR$ defined by
\begin{equation*}
H(x,p) = \frac{1-x}{2}e^{\beta x}\left[e^{2p} - 1\right] + \frac{1+x}{2}e^{-\beta x}\left[e^{-2p} - 1\right]
\end{equation*}
and the associated entropy functional is given by
\begin{equation*}
S(x) = \frac{1-x}{2} \log (1-x) + \frac{1+x}{2} \log (1+x) - \frac{1}{2}\beta x^2 + C_\beta,
\end{equation*}
where $C_\beta$ is such that $\inf_x S(x) = 0$.

We introduce two auxiliary functions that turn up in the analysis at various points. Define
\begin{align*}
G_1(x) & := \cosh(\beta x) - x \sinh(\beta x), \\
G_2(x) & := \sinh(\beta x) - x \cosh(\beta x),
\end{align*}
and note that the Hamiltonian can be rewritten in terms of $G_1$ and $G_2$ as
\begin{equation*}
H(x,p) = \left[\cosh(2p) - 1\right]G_1(x) + \sinh(2p) G_2(x).
\end{equation*}

The following lemma follows from the definitions of $G_1$ and $G_2$.
\begin{lemma} \label{lemma:properties_of_G1G2}
For $\beta \in [0,1]$, the functions $G_1,G_2 : [-1,1] \rightarrow \bR$ have the following properties:
\begin{enumerate}[(a)]
\item $G_1$ is even, positive, increasing for $x \leq 0$ and decreasing for $x \geq 0$,
\item $G_2$ is odd, positive for $x \leq 0$, negative for $x \geq 0$ and decreasing.
\end{enumerate}
\end{lemma}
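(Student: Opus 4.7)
The plan is to reduce both parts to derivative computations on $[-1,1]$, with the assumption $\beta \in [0,1]$ used only to sign the factor $\beta - 1$. First I would dispatch the parity claims by direct substitution, using $\cosh(-y) = \cosh(y)$ and $\sinh(-y) = -\sinh(y)$, which immediately yield $G_1(-x) = G_1(x)$ and $G_2(-x) = -G_2(x)$.

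Next I would differentiate and rearrange to obtain
\[
G_1'(x) = (\beta-1)\sinh(\beta x) - \beta x \cosh(\beta x), \qquad G_2'(x) = (\beta-1)\cosh(\beta x) - \beta x \sinh(\beta x).
\]
For $G_1$ on $x \in [0,1]$, both summands are nonpositive: $(\beta-1)\sinh(\beta x) \leq 0$ since $\beta \leq 1$ and $\sinh(\beta x) \geq 0$, and $\beta x \cosh(\beta x) \geq 0$. Hence $G_1$ is decreasing on $[0,1]$, and by evenness increasing on $[-1,0]$. For positivity, the minimum on $[-1,1]$ therefore lies at $x = \pm 1$ and equals $\cosh \beta - \sinh \beta = e^{-\beta} > 0$.

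For $G_2$ the argument is analogous and slightly cleaner: $(\beta-1)\cosh(\beta x) \leq 0$ because $\beta \leq 1$ and $\cosh \geq 1$, while $\beta x \sinh(\beta x) \geq 0$ since $x$ and $\sinh(\beta x)$ share the same sign on $[-1,1]$. Thus $G_2$ is decreasing on the whole interval, and combined with the evaluation $G_2(0) = 0$ this yields both sign claims.

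There is really no obstacle here; the content of the lemma is elementary. The only point worth flagging is that the hypothesis $\beta \leq 1$ enters only through the sign of $\beta - 1$: if one allowed $\beta > 1$, the first term in $G_2'$ would flip sign and the two terms would compete, and $G_1$ would likewise cease to be unimodal. This explains why the constraint is used in precisely this form and suggests what must change in the analysis of the Curie--Weiss example below when the temperature drops through the critical value.
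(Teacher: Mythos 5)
Your proof is correct. The paper itself offers no proof, stating only that the lemma ``follows from the definitions of $G_1$ and $G_2$,'' so your derivative computations supply exactly the details the paper leaves implicit: the parity claims by direct substitution, the monotonicity from
\[
G_1'(x) = (\beta-1)\sinh(\beta x) - \beta x\cosh(\beta x), \qquad G_2'(x) = (\beta-1)\cosh(\beta x) - \beta x\sinh(\beta x),
\]
and the sign claims from the endpoint value $G_1(\pm 1) = e^{-\beta} > 0$ and from $G_2(0) = 0$ together with monotonicity. Your closing observation about the role of $\beta \leq 1$ is also apt; it is precisely the loss of the sign of $\beta - 1$ that makes the sub-critical regime special in the Curie--Weiss analysis that follows.
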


We start out by verifying Assumption \ref{assumption:entropic_interpolations_not_on_boundary}.

\begin{lemma} \label{lemma:CW_cost_for_stationarity_is_smaller_in_interior}
Assumption \ref{assumption:entropic_interpolations_not_on_boundary} is satisfied: $H = H^*$, $H_p(-1,0) > 0$, $H_p(1,0) > 0$, the map $x \mapsto \cL(x,0)$ is decreasing on an open neighbourhood $U_{-1}$ of $-1$ and increasing on an open neighbourhood $U_1$ of $1$, and we have
\begin{align*}
& \lim_{x \downarrow -1} \frac{\cL(x,0) - \cL(-1,0)}{S(x) - S(-1)} = \infty \\
& \lim_{x \uparrow 1} \frac{\cL(x,0) - \cL(1,0)}{S(x) - S(1)} = \infty.
\end{align*}
As a consequence, Assumption \ref{assumption:entropic_interpolations_not_on_boundary} is satisfied for this model.
\end{lemma}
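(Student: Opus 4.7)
The statement is a direct application of Proposition \ref{proposition:trajectories_in_interior}, so the plan is to verify its four hypotheses for the Curie--Weiss Hamiltonian on $E = [-1,1]$. (The statement's condition $H_p(1,0)>0$ is a typo for $H_p(1,0)<0$, as the computation below shows.)

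\textbf{Step 1: reversibility $H=H^*$.} First I compute
\begin{equation*}
DS(x) \;=\; \tfrac{1}{2}\log\!\tfrac{1+x}{1-x} - \beta x,
\qquad\text{so}\qquad e^{DS(x)} \;=\; \sqrt{\tfrac{1+x}{1-x}}\,e^{-\beta x}.
\end{equation*}
Plugging $DS(x)-p$ into $H$, the factor $e^{2(DS(x)-p)}$ converts $\tfrac{1-x}{2}e^{\beta x}$ into $\tfrac{1+x}{2}e^{-\beta x}e^{-2p}$, and symmetrically the other way. Collecting terms gives $H(x,DS(x)-p)=H(x,p)$.

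\textbf{Step 2: sign of $H_p$ at the boundary.} A direct differentiation shows $H_p(x,0) = 2G_2(x)$. Lemma \ref{lemma:properties_of_G1G2} gives $G_2(-1) > 0$ and $G_2(1) < 0$, so $H_p(-1,0)>0>H_p(1,0)$ (this is what the hypothesis of Proposition \ref{proposition:trajectories_in_interior} actually requires).

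\textbf{Step 3: monotonicity of $\cL(\cdot,0)$ and $S$ near the endpoints.} For $S$, differentiation gives $S'(x) = \tfrac{1}{2}\log\tfrac{1+x}{1-x} - \beta x$, which tends to $-\infty$ as $x\downarrow -1$ and to $+\infty$ as $x\uparrow 1$, giving the required monotonicity. For $\cL$, reversibility implies that the minimum of $p\mapsto H(x,p)$ is attained at $p=\tfrac{1}{2}DS(x)$, so
\begin{equation*}
\cL(x,0) \;=\; -H\!\left(x,\tfrac{1}{2}DS(x)\right) \;=\; \tfrac{1-x}{2}e^{\beta x} + \tfrac{1+x}{2}e^{-\beta x} - \sqrt{1-x^{2}}.
\end{equation*}
The first two terms are smooth on $[-1,1]$, whereas $-\sqrt{1-x^{2}}$ has derivative $\tfrac{x}{\sqrt{1-x^{2}}}$ which goes to $-\infty$ as $x\downarrow -1$ and to $+\infty$ as $x\uparrow 1$. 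This dominates and gives the required monotonicity near each endpoint.

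\textbf{Step 4: limits in (d).} This is the only asymptotic step. Writing $x = -1+\epsilon$, the explicit expressions give
\begin{equation*}
\cL(x,0) - \cL(-1,0) \;=\; -\sqrt{2\epsilon}\,(1+o(1)), \qquad
S(x) - S(-1) \;=\; \tfrac{\epsilon}{2}\log\epsilon\,(1+o(1)).
\end{equation*}
Both are negative for small $\epsilon>0$, and the ratio is
\begin{equation*}
\frac{\cL(x,0)-\cL(-1,0)}{S(x)-S(-1)} \;\sim\; \frac{-2\sqrt{2}}{\sqrt{\epsilon}\,\log\epsilon} \;\longrightarrow\; +\infty,
\end{equation*}
since $\sqrt{\epsilon}\,\log\epsilon\to 0^{-}$. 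The computation at $x=1$ is symmetric. Applying Proposition \ref{proposition:trajectories_in_interior} then yields Assumption \ref{assumption:entropic_interpolations_not_on_boundary}.

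\textbf{Main obstacle.} None of the steps is conceptually difficult; the only delicate point is Step 4, where one must check that the square-root singularity of $\cL(\cdot,0)$ at $\pm 1$ (coming from the entropic cost $-\sqrt{1-x^{2}}$) genuinely dominates the weaker $\epsilon\log\epsilon$ singularity of the entropy $S$. Everything else is bookkeeping on the explicit expressions for $H$ and $S$.
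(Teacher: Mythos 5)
Your proof is correct and follows essentially the same route as the paper: verify the four hypotheses of Proposition \ref{proposition:trajectories_in_interior} using the explicit formulas $\cL(x,0)=G_1(x)-\sqrt{1-x^2}$ and $DS(x)=\tfrac12\log\tfrac{1+x}{1-x}-\beta x$, with the key observation in Step 4 being that the square-root singularity of $\cL(\cdot,0)$ at the endpoints dominates the $\epsilon\log\epsilon$ singularity of $S$. The only cosmetic difference is that the paper verifies condition (d) by comparing the derivatives $\frac{\dd}{\dd x}\cL(x,0)\sim -\tfrac{c}{\sqrt{1+x}}$ and $DS(x)\sim\tfrac12\log(1+x)$ rather than expanding the function differences themselves; you also correctly flag that the stated sign $H_p(1,0)>0$ is a typo for $H_p(1,0)<0$, which is what Proposition \ref{proposition:trajectories_in_interior}(b) actually requires and what $H_p(1,0)=2G_2(1)=-2e^{-\beta}$ gives.
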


\begin{proof}
The first claim follows from a direct computation. For the second claim, note that $H_p(x,0) = 2G_2(x)$. By Lemma \ref{lemma:properties_of_G1G2}, we find $H_p(-1,0) = G_2(-1) > 0$ and $H_p(1,0) = 2G_2(1) > 0$.

The third claim is immediate from
\begin{equation*}
\cL(x,0) = - \inf_p H(x,p) = - H_p\left(x,\frac{1}{2}DS(x)\right) = - \sqrt{1-x^2} + G_1(x).
\end{equation*}
The square root has diverging derivative for $x$ close to the boundary, whereas the second term is continuously differentiable on $[-1,1]$, thus we obtain the result.

\smallskip

We only verify the fourth claim for the left boundary. In particular, it is sufficient to show that $\frac{\dd}{\dd x} \cL(x,0)$ diverges to $ - \infty$ faster that $DS(x)$ diverges to $- \infty$ as $x \downarrow -1$.
In particular, close to $-1$, we have
\begin{equation*}
\frac{\dd}{\dd x} \cL(x,0) =  -a(x)\frac{1}{\sqrt{1+x}} + c_1(x), \qquad DS(x) = \frac{1}{2} \log (1+x) + c_2(x),
\end{equation*}
where $c_1,c_2$ are functions that are bounded on a neighbourhood of $-1$ and where $a$ is a function close to $-1$ for $x$ close to $-1$. The result follows from the asymptotic behaviour of $\frac{-1}{\sqrt{1+x}}$ and $\log(1+x)$ close to $-1$.
\end{proof}

We conclude that in this setting entropic interpolations are in the interior except perhaps at the start and end-point. Our main theorem shows that the entropy-convexity inequality holds with a constant that nicely depends on $\beta$.

\begin{theorem}[Curie-Weiss jump process on two states] \label{theorem:interacting_jump_processes_on_two_states_entropy_convexity}
Consider the Hamiltonian $H : [-1,1] \times \bR \rightarrow \bR$ defined by
\begin{equation*}
H(x,p) = \left[\cosh(2p) - 1\right]G_1(x) + \sinh(2p) G_2(x)
\end{equation*}
for $\beta \leq 1$.
Then $H$ satisfies the entropy-convexity inequality with respect to the relative entropy $S(x) = \frac{1-x}{2} \log (1-x) + \frac{1+x}{2} \log (1+x) - \frac{1}{2}\beta x^2$ with constant $4(1-\beta)$ and thus the conclusions of Theorem \ref{theorem:convexity_of_entropy_along_interpolations} hold with constant $4(1-\beta)$ for all entropic interpolations. The constant is optimal for the entropy-information and the entropy-convexity inequalities.
\end{theorem}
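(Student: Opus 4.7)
The plan is to verify the reversible entropy-convexity inequality \eqref{eqn:entropy_convexity_reversible} for all $x \in (-1,1)$ and $p \in \bR$ with $\kappa = 4(1-\beta)$. The conclusions of Theorem \ref{theorem:convexity_of_entropy_along_interpolations} then follow at once, since Lemma \ref{lemma:CW_cost_for_stationarity_is_smaller_in_interior} ensures Assumption \ref{assumption:entropic_interpolations_not_on_boundary} for this model, and optimality of the constant is a direct consequence of Proposition \ref{proposition:optimality_converse_inequality_via_matrix}.

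Writing $c := \cosh(2p)$ and $s := \sinh(2p)$ so that $c^2 - s^2 = 1$, differentiation of $H = (c-1)G_1 + sG_2$ yields
\[
H_p = 2sG_1 + 2cG_2,\; H_{pp} = 4cG_1 + 4sG_2,\; H_x = (c-1)G_1' + sG_2',\; H_{px} = 2sG_1' + 2cG_2',
\]
and hence $pH_p - H = G_1(2ps - c + 1) + G_2(2pc - s)$. Expanding $pH_{px}H_p - pH_{pp}H_x$ and using $c^2 - s^2 = 1$ together with the cancellation of the $scG_1G_2'$ and $scG_1'G_2$ contributions reduces this difference to $4p\bigl[(c-1)G_1G_1' + G_2G_2' + sG_1'G_2\bigr]$. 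Subtracting $H_xH_p$ then presents the right-hand side of \eqref{eqn:entropy_convexity_reversible} as an explicit bilinear form in $\{G_1,G_2\} \otimes \{G_1',G_2'\}$ with polynomial coefficients in $(p,s,c)$.

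The main obstacle is the global verification of this explicit inequality after substituting
\[
G_1'(x) = (\beta-1)\sinh(\beta x) - \beta x \cosh(\beta x),\quad G_2'(x) = (\beta-1)\cosh(\beta x) - \beta x \sinh(\beta x).
\]
As a guiding check, at $x = 0$ one has $G_1 = 1,\ G_2 = G_1' = 0,\ G_2' = \beta - 1$, and \eqref{eqn:entropy_convexity_reversible} collapses to $4ps \leq (c-1)(c+3)$; this can be closed by noting that $f(p) := (c-1)(c+3) - 4ps$ satisfies $f(0) = 0$ and $f'(p) = 4c(s - 2p)$, which has the same sign as $p$ because $\sinh(2p) \geq 2p$ for $p \geq 0$. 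A Taylor expansion at $p = 0$ shows both sides of this slice share the leading term $8p^2$, so $4(1-\beta)$ is already sharp at $x = 0$. For general $x$, I would attempt to rewrite $\text{RHS} - 4(1-\beta)(pH_p - H)$ as a sum of squares in $(G_1,G_2)$ with non-negative coefficients, exploiting $G_1 > 0$ and $xG_2 \leq 0$ from Lemma \ref{lemma:properties_of_G1G2} together with the reversibility $H = H^*$, which allows reducing to the half-strip $0 \leq p \leq \tfrac12 DS(x)$.

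For optimality, $H_p(x,0) = 2G_2(x)$ vanishes only at $x_s = 0$ because $G_2$ is strictly decreasing by Lemma \ref{lemma:properties_of_G1G2}; moreover $S''(0) = 1 - \beta > 0$ for $\beta < 1$, so $x_s = 0$ is a strict local minimum of $S$ and the hypotheses of Proposition \ref{proposition:optimality_converse_inequality_via_matrix} are satisfied. Computing $H_{px}(0,0) = 2G_2'(0) = -2(1-\beta)$, the scalar $c + 2H_{px}(0,0) = c - 4(1-\beta)$ is strictly positive precisely when $c > 4(1-\beta)$, so every valid EII constant is bounded above by $4(1-\beta)$. The last lemma of Section \ref{section:entropic_interpolations} promotes this into the same bound for every ECI constant in the present reversible single-minimum setting, and the boundary case $\beta = 1$ is then recovered by continuity of all quantities in $\beta$.
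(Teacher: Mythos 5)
Your setup, algebraic reductions, and optimality argument are all correct. In particular the computation
\[
pH_{px}H_p - pH_{pp}H_x = 4p\bigl[(c-1)G_1G_1' + sG_1'G_2 + G_2G_2'\bigr]
\]
checks out, the $x=0$ slice is closed cleanly via $f'(p) = 4c(s-2p) \geq 0$, and the optimality argument via Proposition \ref{proposition:optimality_converse_inequality_via_matrix} with $H_{px}(0,0) = 2G_2'(0) = -2(1-\beta)$ is exactly what the paper does (the paper's reference to Proposition \ref{assumption:basic_assumption} there is evidently a mislabel).

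However, the main burden of the theorem — verifying the entropy-convexity inequality for arbitrary $x \in (-1,1)$ — is not actually carried out. Your proposal stops at "I would attempt to rewrite $\text{RHS} - 4(1-\beta)(pH_p-H)$ as a sum of squares in $(G_1,G_2)$ with non-negative coefficients," which is a strategy, not a proof, and it is unclear that a clean sum-of-squares decomposition exists. The paper takes a different and essential route: observe that both sides of the ECI vanish at $p=0$ and are odd under $(x,p) \mapsto (-x,-p)$, so it suffices to compare their $p$-derivatives for $p \geq 0$; then apply the scalar inequality $2p \leq \sinh(2p)$ against the manifestly positive $H_{pp}$ to absorb the factor $\kappa p$ into a $\sinh(2p)$ term; then exploit the structural identities $H_{ppp} = 4H_p$ and $H_{pxp} = 4H_x + 4G_1'$ to collapse the resulting expression into a tractable form in $G_1,G_2,G_1',G_2'$; and finally split on the sign of $x$, where the monotonicity/parity facts in Lemma \ref{lemma:properties_of_G1G2} and the sub-inequality $G_1'(x) + G_2'(x) + (1-\beta) \leq 0$ for $x \leq 0$ deliver the required signs. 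None of this case analysis, nor the auxiliary estimate on $G_1' + G_2'$, appears in your proposal. The suggestion to restrict to the half-strip $0 \leq p \leq \tfrac12 DS(x)$ via reversibility is also not how the paper argues and there is no indication it would simplify matters. As written, your argument establishes the theorem only at $x=0$.
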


As noted above $H = H^*$ in this case, so we only have to consider the $4(1-\beta)$ entropy-convexity inequality for $H$. The proof is based on the basic inequality that $(1-\beta)p \leq (1-\beta)\sinh(p)$ for $p \geq 0$, and thus does not immediately generalize for $\beta > 1$.

\begin{proof}[Proof of Theorem \ref{theorem:interacting_jump_processes_on_two_states_entropy_convexity}]
We will prove
\begin{multline*}
\kappa \left[\ip{p}{H_p(x,p)} - H(x,p)\right] \leq \ip{p}{H_{px}(x,p)H_p(x,p)} \\
- \ip{p}{H_{pp}(x,p)H_x(x,p)} - \ip{H_x(x,p)}{H_p(x,p)},
\end{multline*}
for $\kappa = 4(1-\beta)$. Note that for $p = 0$ all terms equal $0$. As the state-space for $p$ is one-dimensional and the problem is symmetric under flipping $(x,p)$ to $(-x,-p)$, it suffices to prove that the derivatives in $p$, for $p \geq 0$ for every fixed $x$ of the functions on the left and right hand side are ordered with the same constant $\kappa = 4(1-\beta)$:
\begin{multline*}
\kappa p H_{pp}(x,p) \leq p H_{pxp}(x,p)H_p(x,p) \\
- pH_{ppp}(x,p) H_x(x,p) - 2 H_{pp}(x,p) H_x(x,p).
\end{multline*}
Our argument will be based on the basic inequality that $2p \leq \sinh(2p)$ for $p \geq 0$. In particular, as $H_{pp}(x,p) > 0$ by the strict convexity of $H$ in the momentum variable this implies that 
\begin{equation*}
4(1-\beta)p H_{pp}(x,p) \leq 2 (1-\beta) \sinh(2p) H_{pp}(x,p).
\end{equation*}
Thus, it suffices to prove for $p \geq 0$ and all $x$ that
\begin{multline} \label{eqn:entropy_convexity_CW_final_to_prove}
0 \leq p H_{pxp}(x,p)H_p(x,p) - pH_{ppp}(x,p) H_x(x,p) \\
- 2 H_{pp}(x,p) \left(H_x(x,p) + (1-\beta)\sinh(2p) \right).
\end{multline}

To do this, we study the Hamiltonian in terms of $G_1$ and $G_2$ as
\begin{equation*}
H(x,p) = \left[\cosh(2p) - 1\right]G_1(x) + \sinh(2p) G_2(x).
\end{equation*}
This representation immediately yields that
\begin{equation*}
H_{pp}(x,p) = 4 \cosh(2p)G_1(x) + 4 \sinh(2p) G_2(x),
\end{equation*}
which in turn implies that
\begin{equation*}
H_{pxp}(x,p) = 4H_x(x,p) + 4G_1'(x).
\end{equation*}
As $H_{ppp}(x,p) = 4 H_p(x,p)$, we conclude that the first two terms of the right hand side of \eqref{eqn:entropy_convexity_CW_final_to_prove} equal
\begin{equation}
p H_{pxp}(x,p)H_p(x,p) - pH_{ppp}(x,p) H_x(x,p) = 4 p G_1'(x) H_p(x,p).
\end{equation}
The last term of \eqref{eqn:entropy_convexity_CW_final_to_prove} can be rewritten as
\begin{align*}
\hspace{-1em} & -2 H_{pp}(x,p)\left((\cosh(2p) - 1)G_1'(x) + \sinh(2p)\left(G_2'(x) + 1 - \beta\right) \right) \\
& = -2 H_{pp}(x,p)(\cosh(2p) - 1)G_1'(x) \\
& \qquad + \sinh(2p) H_{pp}(x,p)\left[(1-\beta)(\cosh(\beta x) - 1) + \beta x \sinh(\beta x) \right].
\end{align*}
Rewriting these last two equations, we have to prove for all $x$ and $p \geq 0$ that
\begin{align}
\hspace{-2em} 0 & \leq 8\left[p \sinh(2p) - \cosh^2(2p) + \cosh(2p)\right] G_1(x)G_1'(x) \label{eqn:entropy_convexity_CW_final_bound} \\
\hspace{-2em} & \quad + 8\left[p\cosh(2p) + \sinh(2p)\cosh(2p) - \sinh(2p)\right] G_1'(x)G_2(x)\notag  \\
\hspace{-2em} & \quad + 2 H_{pp}(x,p)\sinh(2p)\left[(1-\beta)(\cosh(\beta x) - 1) + \beta x \sinh(\beta x) \right].\notag 
\end{align}
This will be proven in two steps, first we prove this inequality for $x \geq 0$ and all $p \geq 0$, and afterwards we consider the case that $x \leq 0$ and $p \geq 0$.

\smallskip

\textit{Case 1: $x \geq 0$}. It can immediately be seen that the third line in \eqref{eqn:entropy_convexity_CW_final_bound} is bounded below by $0$. 
For the  first line, we show that
\begin{equation*}
p \mapsto p \sinh(2p) - \cosh^2(2p) + \cosh(2p)
\end{equation*}
is non-positive for $p \geq 0$. First note that $2p \leq \sinh(2p)$, and thus
\begin{align*}
& p \sinh(2p) - \cosh^2(2p) + \cosh(2p) \\
& \quad \leq \frac{1}{2} \sinh^2(2p)- \cosh^2(2p) + \cosh(2p) \\
& \quad = -\frac{1}{2} - \frac{1}{2} \cosh^2(2p) + \cosh(2p) \\
& \quad = -\frac{1}{2}(\cosh(2p) - 1)^2 \\
& \quad \leq 0.
\end{align*}
As $G_1(x)G_1'(x) \leq 0$ for $x \geq 0$ by Lemma \ref{lemma:properties_of_G1G2}, also the first term of \eqref{eqn:entropy_convexity_CW_final_bound} is non-negative.

We proceed with the second term. The map
\begin{equation*}
p\cosh(2p) + \sinh(2p)\cosh(2p) - \sinh(2p)
\end{equation*}
is non-negative for $p \geq 0$ as $\cosh(2p) \geq 1$. Additionally, by Lemma \ref{lemma:properties_of_G1G2}, the product $G_1'(x)G_2(x)$ is non-negative.

We conclude that \eqref{eqn:entropy_convexity_CW_final_bound} holds for $p \geq 0$ and $x \geq 0$.

\smallskip

\textit{Case 2: $x \leq 0$.} The non-negativity for lines 2 and 3 of the right-hand side in \eqref{eqn:entropy_convexity_CW_final_bound} still hold, but we need to show that these lines compensate line 1, that is now negative due to the positivity of the product $G_1(x)G_1'(x)$. In particular, we will show that line three of the right hand side of \eqref{eqn:entropy_convexity_CW_final_bound} compensates the first term. Note that
\begin{equation} \label{eqn:entropy_convexity_CW_G2prime}
0 \geq (1-\beta)(\cosh(\beta x) - 1) + \beta x \sinh(\beta x) = - G_2'(x) - (1-\beta),
\end{equation}
so that the third term of \eqref{eqn:entropy_convexity_CW_final_bound} equals
\begin{align*}
& 2 H_{pp}(x,p)\sinh(2p)\left[(1-\beta)(\cosh(\beta x) - 1) + \beta x \sinh(\beta x) \right] \\
& \quad = -2 H_{pp}(x,p)\sinh(2p)\left[G_2'(x) + (1-\beta)\right] \\
& \quad = -8 \cosh(2p)\sinh(2p) G_1(x)\left[G_2'(x) + (1-\beta)\right] \\
& \qquad - 8 \sinh^2(2p) G_2(x)\left[G_2'(x) + (1-\beta)\right]. 
\end{align*}
By equation \eqref{eqn:entropy_convexity_CW_G2prime} and Lemma \ref{lemma:properties_of_G1G2} the term in the last line is non-negative if $x \leq 0$. Thus, we can use the term in line three to compensate the first term in \eqref{eqn:entropy_convexity_CW_final_bound}. In particular, we have to show that 
\begin{multline*}
0 \leq -8 \cosh(2p)\sinh(2p) G_1(x)\left[G_2'(x) + (1-\beta)\right] \\
+8\left[p \sinh(2p) - \cosh^2(2p) + \cosh(2p)\right] G_1(x)G_1'(x).
\end{multline*}
for $x \leq 0$ and $p \geq 0$. We divide by $8G_1(x) > 0$ and show
\begin{multline*}
0 \leq - \cosh(2p)\sinh(2p) \left[G_2'(x) + (1-\beta)\right] \\
+\left[p \sinh(2p) - \cosh^2(2p) + \cosh(2p)\right] G_1'(x).
\end{multline*}
Below, we will prove that $G_1'(x) + G_2'(x) + (1- \beta) \leq 0$ for $x \leq 0$. Using this inequality, we find
\begin{multline*}
\hspace{-2em} - \cosh(2p)\sinh(2p) \left[G_2'(x) + (1-\beta)\right] + \left[p \sinh(2p) - \cosh^2(2p) + \cosh(2p)\right] G_1'(x) \\
\hspace{-2em} \geq \left[\cosh(2p)\sinh(2p) + p \sinh(2p) - \cosh(2p)(\cosh(2p) - 1)\right] G_1'(x).
\end{multline*}
As $G'_1(x) \geq 0$ for $x \leq 0$ and $p \sinh(2p) \geq 0$ and $\sinh(2p) \geq \cosh(2p) - 1$, we find that this term is non-negative. 

\smallskip

We are left to prove that $G_1'(x) + G_2'(x) + (1-\beta) \leq 0$ for $x \leq 0$. First, we calculate
\begin{align*}
G_1'(x) & = (\beta-1) \sinh(\beta x) - \beta x \cosh(\beta x), \\
G_2'(x) & = (\beta-1) \cosh(\beta x) - \beta x \sinh(\beta x).
\end{align*}
We conclude that
\begin{align*}
& G_1'(x) + G_2'(x) + (1-\beta) \\
& = (\beta - 1) \left[\sinh(\beta x) + \cosh( \beta x) - 1\right] - \beta x \left[\cosh(\beta x) + \sinh(\beta x) \right],
\end{align*}
which yields that $G_1'(x) + G_2'(x) + (1-\beta) \leq 0$ for $x \leq 0$.

\smallskip

We conclude that \eqref{eqn:entropy_convexity_CW_final_bound} holds for all $x \in [-1,1]$ and $p \geq 0$. This implies \eqref{eqn:entropy_convexity_CW_final_to_prove} and thus the entropy-convexity inequality with constant $4(1-\beta)$.

\smallskip

To prove that $4(1-\beta)$ is optimal, we turn to Proposition \ref{assumption:basic_assumption}. In this setting $0$ is the unique stationary point and $-H_{px}(0,0) = 2(1-\beta)$. It follows that $4(1-\beta)$ is optimal for the entropy-information inequality and thus for the entropy-convexity inequality.
\end{proof}

\subsection{Interacting random walks on the hypercube}

For the final example, we use the tensorization results to analyse the trajectories of the empirical distributions of interacting random walks on a hypercube $F = \{-1,1\}^N$.

For the basic model, we consider mean-field interacting walkers on $\{-1,1\}$. For the configuration of $n$ walkers, denoted by $\sigma = \{\sigma(1),\dots, \sigma(n)\} \in \{-1,1\}^n$, we denote the empirical distribution by $\mu \in \cP(\{-1,1\})$ of $\sigma$ by
\begin{equation*}
\mu(\sigma) := \frac{1}{n} \sum_{i=1}^n \delta_{\sigma(i)}.
\end{equation*}
Let $m : \cP(\{-1,1\}) \rightarrow [-1,1]$ denote the magnetization map $m(\nu) = \nu(1) - \nu(-1)$ and denote by $V : [-1,1] \rightarrow \bR$ the function $V(x) = - \frac{1}{2}x^2$.

The random walkers have generator
\begin{equation*}
\cA_{0,n} f(\sigma) = \sum_{j=1}^n e^{-n \beta\left[V(m(\mu^{j})) - V(m(\mu)) \right]} \left[f(\sigma^{j}) - f(\sigma)\right],
\end{equation*}
where $\sigma^{j}$ is the configuration obtained from $\sigma$ by flipping the $j$-th spin and where $\mu^{j}$ is the empirical distribution obtained from $\sigma^{j}$.

Denote by $t \mapsto \sigma_n(t)$ the jump process corresponding to the generator $\cA_{n,0}$. If the distribution of the starting magnetization satisfies the large deviation principle with rate function $I_0$, then the trajectory of the empirical distributions $t \mapsto \mu(\sigma_n(t))$ satisfies the large deviation principle on $D_{\cP(\{-1,1\})}(\bR^+)$ with rate function $I$ given by
\begin{equation*}
I(\gamma) = I_0(\gamma(0)) + \int_0^\infty \cL_0(\gamma(t),\dot{\gamma}(t)) \dd t,
\end{equation*}
if $\gamma$ is absolutely continuous, and $I(\gamma) = \infty$ otherwise. $\cL_0(\mu,v)$ is obtained via the Legendre transform of $H_0(\mu,p)$, given by
\begin{equation*}
H_0(\mu,p) = \mu(1) e^{\beta(\mu(-1) - \mu(1))} \left[e^{p_{-1} - p_1} -1\right] + \mu(-1)e^{\beta(\mu(1) - \mu(-1))} \left[e^{p_{1} - p_{-1}} -1\right].
\end{equation*}
The stationary entropy $S_0$ is given by
\begin{equation*}
S_0(\mu) = \mu(1) \log 2\mu(1) + \mu(2) \log 2 \mu(2) - \frac{1}{2} \beta V(m(\mu)).
\end{equation*}
Note that up to the diffeomorphism $\mu \mapsto \mu(\mu)$ this is exactly the setting of Section \ref{section:Glauber_for_CW}. In particular, we find that we have the entropy convexity inequalities if $\beta \leq 1$ with constant $4(1-\beta)$.

\smallskip

We tensorize these results to random walkers on the hypercube $F = \prod_{i=1}^N \{-1,1\} = \{-1,1\}^N$. A configuration of $n$ walkers is still denoted by $\sigma = \{\sigma(1),\dots,\sigma(n)\}$ but now takes its values in $\{\{-1,1\}^N\}^n$. As above, denote the empirical distribution $\mu \in E := \cP(F)$ of $\sigma$ by
\begin{equation*}
\mu(\sigma) := \frac{1}{n} \sum_{i=1}^n \delta_{\sigma(i)}.
\end{equation*}
Recall from Section \ref{section:tensorization_for_empirical_density_on_product_spaces} the map $\pi_i : \{-1,1\}^n \rightarrow \{-1,1\}$ such that $\sigma$ gets mapped to its $i$-th component. 

The transition operator of $n$ interacting walks on $F$ is given by
\begin{equation} \label{eqn:generator_walks}
\cA_n f(\sigma) =  \sum_{j = 1}^n \frac{1}{N}\sum_{i = 1}^N e^{-n \beta\left[V(m(\pi_i\mu^{i,j})) - V(m(\pi_i\mu)) \right]} \left[f(\sigma^{i,j}) - f(\sigma)\right],
\end{equation}
where $\sigma^{i,j}$ is the configuration obtained from $\sigma$ by flipping the $i$-th coordinate of the $j$-th spin and where $\mu^{i,j}$ is the empirical distribution obtained from $\sigma^{i,j}$.

\smallskip

Note that the rate to flip the $i$-th coordinate depends on the empirical magnetization of the old and the new configuration of the $i$-th coordinate only. This means that, indeed, the system is of product form as in Section \ref{section:tensorization_for_empirical_density_on_product_spaces}. In particular, if $f : \{\{-1,1\}^N\}^n \rightarrow \bR$ is of the form
\begin{equation*}
f(\sigma) = \prod_{i = 1}^N f_i(\pi_i(\sigma)),
\end{equation*}
where the $f_i$ are functions on $\{-1,1\}^n$, we see that
\begin{equation*}
\cA_{n}f(\sigma) = \sum_{i = 1}^N \prod_{k \neq i} f_k(\pi_k(\sigma)) \frac{1}{N}\cA_{n,0}f_i(\pi_i(\sigma)).
\end{equation*}
Thus, considering the large deviations for the trajectories of empirical distributions, we find a Hamiltonian 
\begin{equation*}
H(\mu,p) = \frac{1}{N} \sum_{i=1}^N H_0(\pi_i(\mu),\pi_i(p)),
\end{equation*}
Lagrangian
\begin{equation*}
\cL(\mu,N^{-1}\nu) = \frac{1}{N} \sum_{i=1}^N \cL_0(\pi_i(\mu),\pi_i(\nu))
\end{equation*}
and entropy $S(\mu) = \sum_{i=1}^N S(\pi_i(\mu))$.

We obtain the following theorem as a consequence of Theorem \ref{theorem:interacting_jump_processes_on_two_states_entropy_convexity} and Propositions \ref{proposition:tensorization_for_empirical_measure_product} and \ref{proposition:tensorization_for_empirical_measure_product_trajectories_in_interior}.

\begin{theorem}
Let $\beta \in [0,1]$. Consider $H$ and $S$ as defined above. Then every entropic interpolation satisfies Assumption \ref{assumption:entropic_interpolations_not_on_boundary}.

Additionally, $H$ and $S$ satisfy the entropy convexity inequality with constant $\frac{4}{N}(1-\beta)$. This constant is optimal for the entropy-information and entropy-convexity inequalities.
\end{theorem}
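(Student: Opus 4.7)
The strategy is to reduce the claim to the base Curie-Weiss system of Section \ref{section:Glauber_for_CW} via the tensorization framework of Section \ref{section:tensorization_for_empirical_density_on_product_spaces}, while keeping careful track of the overall factor $1/N$ that appears in the Hamiltonian, but not in the entropy, because of the rate normalization in the generator \eqref{eqn:generator_walks}.

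First, I would identify each base component $(H_0, S_0)$ on $\cP(\{-1,1\})$. Since an element of $\cP(\{-1,1\})$ is uniquely determined by its magnetization, the change of variables $m : \mu \mapsto \mu(1) - \mu(-1)$ realizes $(H_0, S_0)$ as a diffeomorphic copy of the Curie-Weiss jump process on two states studied in Section \ref{section:Glauber_for_CW}. Theorem \ref{theorem:interacting_jump_processes_on_two_states_entropy_convexity} then yields the base entropy-convexity inequality with optimal constant $4(1-\beta)$, and Lemma \ref{lemma:CW_cost_for_stationarity_is_smaller_in_interior} ensures that entropic interpolations of the base system remain in $E_i^\circ$ on $(0,T)$.

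Second, I would apply the tensorization results. Write $\widetilde{H}(\mu,p) := \sum_{i=1}^N H_0(\pi_i(\mu), \pi_i(p))$, so that $H = \widetilde{H}/N$, while $S(\mu) = \sum_i S_0(\pi_i(\mu))$ is already of product form. The pair $(\widetilde H, S)$ is precisely of the form covered by Proposition \ref{proposition:tensorization_for_empirical_measure_product}, which, combined with the base case, yields the entropy-convexity inequality with constant $4(1-\beta)$ for $(\widetilde{H}, S)$; Proposition \ref{proposition:tensorization_for_empirical_measure_product_trajectories_in_interior} simultaneously produces Assumption \ref{assumption:entropic_interpolations_not_on_boundary} for the product system. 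The only remaining step is the passage from $\widetilde H$ to $H = \widetilde H/N$: $S$ (and therefore $DS$, $H^*$, and so on) is untouched, so the left-hand side of the ECI scales like $1/N$ while the right-hand side, being quadratic in derivatives of $H$, scales like $1/N^2$. Consequently $(H, S)$ satisfies ECI$(\kappa/N)$ whenever $(\widetilde H, S)$ satisfies ECI$(\kappa)$, producing the claimed constant $4(1-\beta)/N$.

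Finally, for optimality I would invoke Proposition \ref{proposition:optimality_converse_inequality_via_matrix} at the unique stationary point $\mu^\star$ of the McKean-Vlasov equation, which is the uniform product measure with each marginal $\pi_i(\mu^\star)$ equal to $\tfrac{1}{2}(\delta_{-1}+\delta_1)$. There $D^2S(\mu^\star)$ is strictly positive definite and $H_{px}(\mu^\star, 0)$ is block-diagonal across the $N$ coordinates, each block being $1/N$ times the base matrix whose relevant eigenvalue equals $-2(1-\beta)$, as used at the end of the proof of Theorem \ref{theorem:interacting_jump_processes_on_two_states_entropy_convexity}. Hence $c\, \bONE + 2 H_{px}(\mu^\star, 0)$ is strictly positive definite precisely when $c > 4(1-\beta)/N$, which matches the constant obtained above; since ECI$(\kappa)$ implies EII$(\kappa)$ at a unique minimum, the same bound is optimal for both inequalities. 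The only real obstacle is the scaling bookkeeping in the tensorization step: the ECI is linear in $H$ on the left but quadratic in $H$ on the right, so one has to verify that rescaling $H$ by $1/N$ rescales the admissible constant by the same factor, rather than by its square.
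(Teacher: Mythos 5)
Your proposal follows exactly the derivation route the paper indicates (the paper's entire ``proof'' is the sentence preceding the theorem: it is a consequence of Theorem \ref{theorem:interacting_jump_processes_on_two_states_entropy_convexity} together with Propositions \ref{proposition:tensorization_for_empirical_measure_product} and \ref{proposition:tensorization_for_empirical_measure_product_trajectories_in_interior}). What you add, and what the paper leaves implicit, is the $1/N$ bookkeeping, and you handle it correctly: the left side of the reversible ECI is linear in $H$ while every term on the right is a product of two derivatives of $H$, so replacing $\widetilde H$ by $H = \widetilde H/N$ with $S$ unchanged (which preserves $H = H^*$) scales LHS by $1/N$ and RHS by $1/N^2$, hence turns ECI$(\kappa)$ for $\widetilde H$ into ECI$(\kappa/N)$ for $H$. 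Equivalently, one can first rescale the components $H_i := H_0/N$ and then apply Proposition \ref{proposition:tensorization_for_empirical_measure_product} directly; the two orders give the same constant. The time-reparametrization observation shows Assumption \ref{assumption:entropic_interpolations_not_on_boundary} is also insensitive to the overall $1/N$ factor, so Proposition \ref{proposition:tensorization_for_empirical_measure_product_trajectories_in_interior} applies as you say.

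Your optimality paragraph, however, is not quite rigorous as stated. The state space in the empirical-density tensorization is $E = \cP(\{-1,1\}^N)$, which has dimension $2^N - 1$, while $S(\mu) = \sum_{i=1}^N S_0(\pi_i\mu)$ and $H$ depend only on the $N$ one-dimensional marginals. For $N \geq 2$ the map $\mu \mapsto (\pi_1\mu,\dots,\pi_N\mu)$ has a large kernel on the tangent space, so $D^2 S(\mu^\star)$ has a nontrivial null space and is \emph{not} strictly positive definite; likewise $H_{px}(\mu^\star,0) = \frac{1}{N}\sum_i \pi_i^T H_{0,px}(\pi_i\mu^\star,0)\pi_i$ is not block-diagonal in the coordinates of $\bR^{2^N}$. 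So the hypotheses of Proposition \ref{proposition:optimality_converse_inequality_via_matrix} fail literally. The fix is easy and is essentially what you intend: restrict to curves along which only the $i$-th marginal varies (equivalently, to the $N$-dimensional submanifold of product measures). On that submanifold the system is a $1/N$-scaled copy of the base Curie-Weiss system, and the sharp constant $4(1-\beta)/N$ from Proposition \ref{proposition:optimality_converse_inequality_via_matrix} applied to the base case transfers, since any ECI/EII constant valid on the full $E^\circ$ is a fortiori valid along such curves. The paper does not address this subtlety either, so this is a point to make explicit rather than a flaw particular to your argument.
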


Recalling that in the setting of independent walkers the MLSI($2\kappa$) is equivalent to EII($\kappa$), we recover the result in Example 3.7 of \cite{BoTe06} and Corollary 7.10.(1) of \cite{ErMa12}. Additionally, we see that our entropy convexity inequality holds with the same constant(up to the usual factor 2 difference) as the lower bound on the Ricci curvature in \cite{ErMa12}.

\section{Entropic interpolations remain in the interior for one-dimensional reversible systems} \label{section:proof_entropic_interpolations_in_interior}

To conclude, we prove Proposition \ref{proposition:trajectories_in_interior}. We need some additional results.

\smallskip

To prove that an interpolation $\{x(s)\}_{s \in [0,t]}$ from $a$ to $b$ remains in the interior, we argue by contradiction. Suppose that $x$ that hits the boundary for some $s \in (0,t)$, then we find a cheaper trajectory that also connects $a$ to $b$. To do this, we use the evolution of the entropy $S$ along the interpolation.

We start out with a technical regularity result. 

\begin{lemma} \label{lemma:S_along_flow_is_AC}
In the setting of Proposition \ref{proposition:trajectories_in_interior}, let $\gamma : [0,t] \rightarrow [-1,1]$ be absolutely continuous and such that 
\begin{equation*}
\int_0^t \cL(\gamma(s),\dot{\gamma}(s)) \dd s < \infty.
\end{equation*}
Then $s \mapsto S(\gamma(s))$ is absolutely continuous.
\end{lemma}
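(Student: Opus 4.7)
The plan is to prove absolute continuity of $\phi:=S\circ\gamma$ by combining the local regularity of $S$ on $E^\circ$ with the boundary estimates supplied by conditions (c)--(d) of Proposition~\ref{proposition:trajectories_in_interior}, and then invoking the Banach--Zaretsky theorem.

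First I would decompose $[0,t]=T_\partial\sqcup T_{\mathrm{int}}$, where $T_\partial:=\gamma^{-1}(\{a,b\})$ is closed. On each connected component of $T_\partial$ the curve $\gamma$ is constant at $a$ or $b$, so $\phi$ is locally constant there. On the open set $T_{\mathrm{int}}$ we have $\gamma(s)\in E^\circ$ and $S$ is $C^2$, hence $\phi$ is locally absolutely continuous. Moreover, reversibility $H=H^*$ from Proposition~\ref{proposition:trajectories_in_interior} combined with Lemma~\ref{lemma:derivative_of_entropy} gives at a.e.\ $s\in T_{\mathrm{int}}$
\[
\phi'(s)=\cL(\gamma(s),\dot\gamma(s))-\cL(\gamma(s),-\dot\gamma(s)),
\]
and since $\cL\ge 0$ the one-sided bound $[\phi'(s)]^+\le\cL(\gamma,\dot\gamma)\in L^1([0,t])$ is immediate from the hypothesis.

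Next, to control the negative part and thus the full variation of $\phi$, I would use convexity of $\cL(x,\cdot)$. In the reversible case $H(x,p)=H(x,DS(x)-p)$, so $p\mapsto H(x,p)$ is symmetric around $DS(x)/2$; by strict convexity the minimum is attained there, giving $\partial_v\cL(x,0)\ni \tfrac12 DS(x)$, and hence the Fenchel inequality $\cL(x,v)\ge \cL(x,0)+\tfrac12 DS(x)\,v$. Together with condition (d) of Proposition~\ref{proposition:trajectories_in_interior} --- which bounds $|S(x)-S(a)|$ by an arbitrarily small multiple of $|\cL(x,0)-\cL(a,0)|$ as $x\downarrow a$, and analogously near $b$ --- this localizes the potentially large contributions to $[\phi']^-$ near $\partial E$ and trades them against portions of the action integral. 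Away from the boundary $DS$ is bounded and $S$ is Lipschitz on every compact subset of $E^\circ$, and combined with the AC modulus of $\gamma$ this gives bounded variation of $\phi$ on $[0,t]$.

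Finally, to upgrade ``continuous $+$ BV'' to absolute continuity, I would invoke the Banach--Zaretsky theorem: it suffices to verify Lusin's N-property. This is straightforward, since $\gamma$ has the N-property (it is AC) and $S$ is continuous on $[a,b]$ and $C^1$ on $(a,b)$ with a two-point boundary, so $S$ too has the N-property, and compositions inherit it.

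The hard part is the third paragraph. Pointwise we have no $L^1$-comparable bound between $\cL(\gamma,\dot\gamma)$ and $\cL(\gamma,-\dot\gamma)$, and the natural identity $\cL(x,-v)=\cL(x,v)-DS(x)v$ in the reversible case would turn such a bound directly into the fundamental theorem of calculus for $\phi$ --- precisely what we are trying to prove. Breaking this circularity is where conditions (c) and (d) are essential: they guarantee that the trajectory cannot make costly negative-action excursions towards the boundary where $DS$ blows up, so the singular contributions to the variation of $\phi$ are suppressed by $\int_0^t \cL(\gamma,\dot\gamma)\,\dd s<\infty$.
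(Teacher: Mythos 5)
Your first, second, and fourth paragraphs match the paper's strategy: the paper also verifies Lusin's property (N) for $\gamma$ and for $S$ separately, composes, and extracts the one-sided bound $\phi'(s)\le\cL(\gamma(s),\dot\gamma(s))$ at a.e.\ point of differentiability via Lemma \ref{lemma:derivative_of_entropy} together with $\cL^*\ge0$ (and $\phi'(s)=0$ a.e.\ on $\gamma^{-1}(\{a,b\})$ because $S$ is maximal there). The gap is the third paragraph, which you yourself flag as ``the hard part.'' The attempt to control the negative part $[\phi']^-$ --- and hence the total variation of $\phi$ --- is not carried out: there is no $L^1$ bound available on $\cL^*(\gamma,-\dot\gamma)$ for an arbitrary admissible curve, the Fenchel inequality $\cL(x,v)\ge\cL(x,0)+\tfrac12 DS(x)\,v$ runs in the unhelpful direction, and the appeal to conditions (c)--(d) of Proposition \ref{proposition:trajectories_in_interior} is heuristic and does not produce a variation estimate (those conditions are designed to exclude boundary touching for \emph{optimal} interpolations, not to tame arbitrary curves of finite action). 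You correctly identify a circularity: a clean pointwise bound on $[\phi']^-$ is essentially equivalent to what you are trying to prove.

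The resolution is that bounded variation need not be established at all, so the entire third paragraph can be discarded. Instead of the full Banach--Zaretsky theorem (continuous $+$ BV $+$ (N) $\Rightarrow$ AC), the paper uses a sharper one-sided criterion, Exercise 5.8.57 in Bogachev: a continuous function on a compact interval with Lusin's property (N) whose a.e.\ derivative is bounded \emph{above} by an $L^1$ function is absolutely continuous. Thus once you have continuity, property (N), and the bound $\phi'\le\cL(\gamma,\dot\gamma)\in L^1$ --- all of which you already established --- the conclusion follows, with no control on $[\phi']^-$ and no variation estimate. (The BV conclusion is in fact a by-product of that criterion, not a prerequisite.)
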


Note that this result is non-trivial. A result of Fichtenholz, see Exercise 5.8.61 in \cite{Bo07}, shows that if $DS(x) \rightarrow \infty$ or $DS(x) \rightarrow \infty$ for $x$ close to the boundary, there exists an absolutely continuous trajectory $\gamma$ taking values in $[a,b]$ such that $s \mapsto S(\gamma(s))$ is not absolutely continuous.

\begin{proof}
The proof is somewhat technical and needs the definition of Lusin's property (N). We say that a function $F :(X,\cA,\mu) \rightarrow (Y,\cB,\nu)$ between to measure spaces satisfies (N) if $\nu(F(A)) = 0$ for all $A \in \cA$ with $\mu(A) = 0$.

\smallskip

Pick $\gamma$ that satisfies the assumptions of the lemma. Because $\gamma$ and $S$ are continuous, $s \mapsto S(\gamma(t))$ is continuous. $\gamma$ is absolutely continuous, so it satisfies property (N). As $S$ is continuously differentiable on $(a,b)$ it is absolutely continuous on $(a,b)$. Because $S$ is decreasing in a neighbourhood of $a$ and increasing in a neighbourhood of $b$, the absolute continuity of $S$ on $[a,b]$ follows by the monotone convergence theorem. We conclude that $S$ satisfies (N). Clearly the composition $s \mapsto S(\gamma(s))$ of functions that satisfy (N) also satisfies (N).

\smallskip

To prove that $s \mapsto S(\gamma(s))$ is absolutely continuous, we use Exercise 5.8.57 of \cite{Bo07} that states that a continuous function $f :[\alpha,\beta] \rightarrow \bR$ with property (N) is absolutely continuous if there exists a Lebesgue integrable function $g$ such that $f'(x) \leq g(x)$ at almost every point where $f'(x)$ exists.

\smallskip

We show that we can find such a function $g$ for $f(x) := S(\gamma(s))$, using the assumption that the Lagrangian cost of the trajectory is finite.

First of all, $\gamma$ is differentiable at almost every time. Thus, for almost every time $s$ for which $\gamma(s) \in (a,b)$, the map $f$ is differentiable. For such $s$, we have by Lemma \ref{lemma:derivative_of_entropy} that
\begin{equation*}
\frac{\dd}{\dd s} S(\gamma(s)) \leq \cL(\gamma(s),\dot{\gamma}(s)).
\end{equation*}
As $S$ has its maxima at the boundary, a time $s$ for which $\gamma(x) \in \{-1,1\}$ and $f$ is differentiable, must satisfy $f'(s) = 0 \leq \cL(\gamma(s),\dot{\gamma}(s))$.

\smallskip

Thus, for almost every time $s$ for which $s \mapsto S(\gamma(s))$, we have $\frac{\dd}{\dd s}S(\gamma(s)) \leq \cL(\gamma(s),\dot{\gamma}(s))$. By the assumption of the lemma and Exercise 5.8.57 of \cite{Bo07}, we conclude that $s \mapsto S(\gamma(s))$ is absolutely continuous.
\end{proof}

Our second auxiliary result is a decomposition for $\cL$, which is a result also obtained e.g. in \cite{MPR13}. The decomposition there is given in terms of $\Psi, \Psi^*$ and the decomposition is used to interpret the solution of the McKean-Vlasov equation as the flow that optimizes an entropy-dissipation inequality. Here we give a different interpretation of this decomposition. We first introduce a tilted Hamiltonian $H[x] : E^\circ \times \bR^d \rightarrow \bR$ by
\begin{equation} \label{eqn:def_tilted_hamiltonian}
H[x](y,p) = H(y,p + \frac{1}{2}DS(x)) - H(y,\frac{1}{2}DS(x)).
\end{equation} 
$x$ is a stationary point of the McKean-Vlasov dynamics associated to $H[x]$, i.e. $H[x]_p(x,0) = 0$. Define $\cL[x]$ to be the Lagrangian associated to $H[x]$:
\begin{equation*}
\cL[x](y,v) = \sup_p \left\{ pv - H[x](y,p)\right\}.
\end{equation*}
In relation to the decomposition in \cite{MPR13}, note that $V = 2S$, $\Psi(x,v) = \cL[x](x,v)$ and $\Psi^*(x,p) = H[x](x,p)$. In the special cases that $p=- \frac{1}{2}DS(x)$, we find additionally that
\begin{equation*}
\Psi^*\left(x,- \frac{1}{2}DS(x)\right) = H[x]\left(x,- \frac{1}{2}DS(x)\right) = \cL(x,0).
\end{equation*}

Thus, we find that $\cL(x,v)$ can be decomposed into a cost for making $x$ a stationary point, the cost for having speed $v$ under the tilted dynamics and a correction term: one-half the increase of entropy along the flow.

\begin{lemma} \label{lemma:decomposition_Lagrangian}
For $x \in (a,b)$ and $v \in \bR$, we have the decomposition
\begin{equation*}
\cL(x,v) = \cL(x,0) + \cL[x](x,v) + \frac{1}{2}\ip{DS(x)}{v},
\end{equation*}
where $\cL[x](y,v)$ is defined as the Legendre transform of $p \mapsto H[x](y,p)$, as defined in \eqref{eqn:def_tilted_hamiltonian}. 

\smallskip

Furthermore, for $x \in (a,b)$, we have $\cL[x](x,v) = \cL[x](x,-v)$. Finally, for any absolutely continuous trajectory $\gamma : [0,t] \rightarrow [a,b]$ that has finite Lagrangian cost, we have
\begin{equation*}
\int_0^t \cL(\gamma(s),\dot{\gamma}(s)) - \cL(\gamma(s),-\dot{\gamma}(s)) \dd s = S(\gamma(t)) - S(\gamma(0)).
\end{equation*}
\end{lemma}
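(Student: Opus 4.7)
The proof will handle the three parts in sequence, with the reversibility hypothesis $H = H^*$ providing the key symmetry in each step.

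For the decomposition, I would substitute $q = p - \tfrac{1}{2}DS(x)$ directly into the Legendre transform $\cL(x,v) = \sup_p\{\ip{p}{v} - H(x,p)\}$, which yields
\begin{equation*}
\cL(x,v) = \tfrac{1}{2}\ip{DS(x)}{v} + \cL[x](x,v) - H(x,\tfrac{1}{2}DS(x)).
\end{equation*}
It then suffices to identify $\cL(x,0) = -H(x,\tfrac{1}{2}DS(x))$. Using $H = H^*$, the map $p \mapsto H(x,p)$ is convex and symmetric about $\tfrac{1}{2}DS(x)$, so its minimum is attained exactly there and $\cL(x,0) = -\inf_p H(x,p) = -H(x,\tfrac{1}{2}DS(x))$.

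For the symmetry $\cL[x](x,v) = \cL[x](x,-v)$, the reversibility identity $H(x,q) = H(x,DS(x)-q)$ applied with $q = \tfrac{1}{2}DS(x) - p$ shows
\begin{equation*}
H[x](x,-p) = H(x,\tfrac{1}{2}DS(x)-p) - H(x,\tfrac{1}{2}DS(x)) = H(x,\tfrac{1}{2}DS(x)+p) - H(x,\tfrac{1}{2}DS(x)) = H[x](x,p),
\end{equation*}
and evenness of a function is preserved under Legendre transform.

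For the final integral identity, subtracting the decomposition applied with $-v$ from that applied with $v$, and invoking the symmetry just established, the tilted-Lagrangian terms cancel and one is left with the clean pointwise identity
\begin{equation*}
\cL(x,v) - \cL(x,-v) = \ip{DS(x)}{v}, \qquad x \in (a,b),\, v \in \bR.
\end{equation*}
Given an absolutely continuous $\gamma:[0,t]\to[a,b]$ with finite Lagrangian cost, this combined with the chain rule gives, almost everywhere on $\{s:\gamma(s)\in(a,b)\}$,
\begin{equation*}
\cL(\gamma(s),\dot\gamma(s)) - \cL(\gamma(s),-\dot\gamma(s)) = \frac{\dd}{\dd s} S(\gamma(s)).
\end{equation*}
On the complementary set $\{s:\gamma(s)\in\{a,b\}\}$, absolute continuity of $\gamma$ forces $\dot\gamma = 0$ almost everywhere (the standard level-set property), and the pointwise identity holds trivially as $0 = 0$. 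Integrating over $[0,t]$ and using Lemma \ref{lemma:S_along_flow_is_AC} to recover $S(\gamma(t)) - S(\gamma(0))$ as the integral of the a.e.\ derivative of $S\circ\gamma$ completes the proof.

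The main obstacle is the boundary behaviour: the decomposition, the gradient $DS$, and the tilted Hamiltonian are only controlled on $(a,b)$, and without additional regularity of $S\circ\gamma$ excursions of $\gamma$ to $\{a,b\}$ could in principle contribute singular mass to $S(\gamma(t))-S(\gamma(0))$ that the a.e.\ integrand misses. Lemma \ref{lemma:S_along_flow_is_AC} is exactly the tool that rules this out by upgrading $s\mapsto S(\gamma(s))$ from continuity to absolute continuity, so that the fundamental theorem of calculus applies.
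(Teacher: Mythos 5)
Your proposal is correct and follows essentially the same strategy as the paper for the crucial integral identity: establish that $s\mapsto S(\gamma(s))$ is absolutely continuous via Lemma~\ref{lemma:S_along_flow_is_AC}, identify the a.e.\ derivative separately on $\{\gamma\in(a,b)\}$ and on $\{\gamma\in\{a,b\}\}$, and integrate. The two places you diverge from the paper are minor and worth noting. First, you give a self-contained algebraic proof of the decomposition and of the evenness of $\cL[x](x,\cdot)$ (change of variables $p = q + \tfrac12 DS(x)$ in the Legendre transform, plus symmetry of $p\mapsto H(x,p)$ about $\tfrac12 DS(x)$ when $H=H^*$), whereas the paper simply cites Lemma~2.1 and Proposition~2.1 of \cite{MPR13}; your version is fine and has the advantage of being verifiable in place. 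Second, you handle times with $\gamma(s)\in\{a,b\}$ via the level-set property of absolutely continuous functions, while the paper uses that $a$ and $b$ are local maxima of $S$, so differentiability of $S\circ\gamma$ at such a time forces a zero derivative.

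One small imprecision in that last step: applying the level-set property to $\gamma$ gives $\dot\gamma(s)=0$ a.e.\ on $\gamma^{-1}(\{a,b\})$, which does make the left-hand side $\cL(\gamma(s),\dot\gamma(s))-\cL(\gamma(s),-\dot\gamma(s))$ vanish, but it does \emph{not} by itself give $\frac{\dd}{\dd s}S(\gamma(s))=0$, since $S$ is not differentiable at the boundary and the chain rule is unavailable. You should instead apply the level-set property a second time, now to the absolutely continuous map $S\circ\gamma$ (this is exactly where Lemma~\ref{lemma:S_along_flow_is_AC} enters again): since $\gamma^{-1}(\{a\})\subseteq (S\circ\gamma)^{-1}(\{S(a)\})$ and likewise for $b$, the derivative of $S\circ\gamma$ vanishes a.e.\ there. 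With that one-line addition the argument is complete.
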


\begin{proof}
The first two claims follows as in Lemma 2.1 and Proposition 2.1 \cite{MPR13}.

\smallskip

For the final claim, note that $s \mapsto S(\gamma(s))$ is absolutely continuous by Lemma \ref{lemma:S_along_flow_is_AC}. For times $s$ that $\gamma(s) \in (a,b)$, the derivative of $s \mapsto S(\gamma(s))$ is given by Lemma \ref{lemma:derivative_of_entropy}, using that $H = H^*$. For almost all times $s$ such that $\gamma(s) \notin (a,b)$, $s \mapsto S(\gamma(s))$ is differentiable as the map is absolutely continuous. For these times the derivative must be $0$ as $S$ has its (strict) maxima on the boundary. For these times, note that also $\cL(x,0) - \cL(x,0) = 0$. We conclude that the final claim follows by integration.
\end{proof}

We conclude this section by proving that all entropic interpolations remain in the interior of $E$.

\begin{proof}[Proof of Proposition \ref{proposition:trajectories_in_interior}]
Fix $t > 0$ and $\alpha,\beta \in E$. Let $\gamma$ be an optimal trajectory such that $\gamma(0) = \alpha$ to $\gamma(t) = \beta$.

\smallskip

The strategy of the proof is as follows. We argue by contradiction. First we assume that there exists an interval $[t_0,t_1] \subseteq [0,t]$ on which the trajectory is on the boundary of $E$. Then, we construct a new trajectory, which is on the boundary for the times $t_0$ and $t_1$, but not for $s \in (t_0,t_1)$, which has lower cost. This contradicts the assumption that our trajectory was optimal.
As a second step, we assume there is an isolated time $t^* \in (0,t)$ for which the trajectory is on the boundary. In this setting, we construct a compatible trajectory that remains in the interior for an interval $(t_-,t_+) \ni t^*$ with lower cost, again contradicting the assumption that our trajectory was optimal.

\smallskip

These two contradictions show that an optimal trajectory can not be on the boundary for a time $s \in (0,t)$.

\smallskip

First assume that there exists an interval $[t_0,t_1]$, $t_0 \neq t_1$ such that the optimal trajectory $\gamma$ satisfies $\gamma(s) = a$ for $s \in [t_0,t_1]$. The argument for the boundary $b$ is similar. We construct $\gamma^*$ that has a lower cost to obtain a contradiction. Fix some $\varepsilon > 0$ small enough such that $\varepsilon < \frac{1}{2}(t_1 - t_0)$ and such that the solution of $\dot{x} = H_p(x,0)$ started at $x_0 = a$ does not leave $U_{a}$. Note any solution $\{x(t)\}_{t \geq 0}$ of the McKean-Vlasov equation $\dot{x} = H_p(x,0)$ satisfies $x(t) \in (a,b)$ for $t > 0$ by assumption (b) of the Proposition.

\smallskip

Define $\gamma_\varepsilon : [t_0,t_1] \rightarrow E$ as $\gamma_\varepsilon(0) = a$, $\dot{\gamma}_\varepsilon(s) = H_p(\gamma_\varepsilon(s),0)$ for $s \leq \varepsilon$ and $\gamma_\varepsilon(s) = \gamma_\varepsilon(\varepsilon) =: z(\varepsilon)$ for $s \in [\varepsilon,\frac{1}{2}]$. Additionally, we set $\gamma_\varepsilon$ to be the time-reversed trajectory on the second half of the interval: $\gamma_\varepsilon(t_0 + s) = \gamma_\varepsilon(t_1 - s)$. 

\smallskip

Splitting $[0,t]$ into the two symmetric parts, applying the final part of Lemma \ref{lemma:decomposition_Lagrangian} on the non-stationary part of $\gamma$, we find
\begin{multline*}
\int_0^t \cL(\gamma_\varepsilon(s),\dot{\gamma}_\varepsilon(s)) \dd s = 2\int_0^\varepsilon \cL(\gamma_\varepsilon(s),\dot{\gamma}_\varepsilon(s)) \dd s \\
+ (t_2 - t_1 - 2 \varepsilon) \cL(z(\varepsilon),0) + \left(S(-1)  - S(z(\varepsilon))\right).
\end{multline*}
Now the first term on the right-hand side is $0$ as $\dot{\gamma}_\varepsilon(s) = H_p(\gamma_\varepsilon(s),0)$ for $s \leq \varepsilon$, thus
\begin{multline*}
\int_0^t \cL(\gamma_\varepsilon(s),\dot{\gamma}_\varepsilon(s)) - \cL(\gamma(s),\dot{\gamma}(s)) \dd s \\
\hspace{-1em} = (t_2 - t_1)\left(\cL(z(\varepsilon),0) - \cL(a,0)\right) - 2 \varepsilon \cL(z(\varepsilon,0)) + S(a) - S(z(\varepsilon)).
\end{multline*}
The middle term on the right hand is non-negative. That the first and the third term combined are non-negative for small $\varepsilon$ follows from assumption (d) of the proposition.

Thus, we have contradicted the assumption that there exists an interval $[t_0,t_1]$, $t_0 \neq t_1$ such that $\gamma$ satisfies $\gamma(s) = a$ for $s \in [t_0,t_1]$.

\smallskip

Now suppose there exists $t^* \in (0,t)$ such that $\gamma(t^*) = a$. We show that this leads to a  contradiction. Fix $z > a$. Then the set $B_z : = \gamma^{-1}([a,z))$ is open in $[0,T]$. Because an open set in $\bR$ is the countable disjoint union of open intervals by the Lindel\"{o}f lemma, there are three possibilities:
\begin{enumerate}[(a)]
\item $t^* \in (t_-,t_+)$, $(t_-,t_+) \subseteq B_z$, $t_-,t_+ \notin B_z$,
\item $t^* \in (t_-,t]$, $(t_-,t] \subseteq B_z$, $t_- \notin B_z$,
\item $t^* \in [0,t_+)$, $[0,t_+) \subseteq B_z$, $t_+ \notin B_z$.
\end{enumerate}
Clearly, if $(b)$ happens for all $z > a$, then $[t^*,t] \subseteq \gamma^{-1}(a)$ which contradicts the conclusion of the first part of the proof. A similar contradiction occurs for (c). Note that in case of (a), we have that $\gamma(t_-) = \gamma(t_+) = z$ by the continuity of $\gamma$.

\smallskip

Thus, we can choose $z > a$ close enough to $a$, such that (a) is satisfied and such that $[a,z) \subseteq U_{a}$. Again we construct a cheaper trajectory $\gamma^*$. As noted above, there are $0 < t_- < t^* < t_+ < t$ such that we have $\gamma(t_-) = \gamma(t_+) = z$ and $a \leq \gamma(s) \leq z$ for $t_- \leq s \leq t_+$. Consider the trajectory
\begin{equation*}
\gamma_z(s) =
\begin{cases}
\gamma(s) & \text{for } s \notin [t_-,t_+], \\
z & \text{for } s \in [t_-,t_+].
\end{cases}
\end{equation*}
Using Lemma \ref{lemma:decomposition_Lagrangian}, integrating over time in $[t_-,t_+]$, we find
\begin{equation} \label{eqn:reformulation_of_cost_integral}
\int_{t_-}^{t_+} \cL(\gamma_z(s),\dot{\gamma}_z(s)) \dd s = \int_{t_-}^{t_+} \cL(z,0) \dd s
\end{equation}
and
\begin{equation*}
\int_{t_-}^{t_+} \cL(\gamma(s),\dot{\gamma}(s)) \dd s = \int_{t_-}^{t_+} \cL[\gamma(s)](\gamma(s),\dot{\gamma}(s)) + \cL(\gamma(s),0) \dd s.
\end{equation*}
As the first term of the integrand on the right is non-negative, and second term in the integrand is bounded from below by the integrand on the right in \eqref{eqn:reformulation_of_cost_integral} by condition (c) of the proposition, we find that $\gamma_z$ has a lower cost than $\gamma$, contradicting the assumption that $\gamma$ was optimal.

\smallskip

We conclude that an optimal trajectory can only attain a boundary point at its initial or final time.
\end{proof}

\smallskip

\textbf{Acknowledgement}
The author thanks Michiel Renger for useful discussion on the topic of the paper. The author is supported by The Netherlands Organisation for Scientific Research (NWO), grant number 600.065.130.12N109.


\bibliographystyle{plain} 
\bibliography{../KraaijBib}{}

\end{document}